\theoremstyle{plain}
\newtheorem{thm}{Theorem}[section]
\newtheorem{theorem}[thm]{Theorem}
\newtheorem{corollary}[thm]{Corollary}
\newtheorem{lemma}[thm]{Lemma}
\newtheorem{proposition}[thm]{Proposition}
\newtheorem{ques}[thm]{Question}
\newtheorem{fact}[thm]{Fact}
\newtheorem{obser}[thm]{Observation}
\newtheorem{conjecture}[thm]{Conjecture}
\theoremstyle{definition}
\newtheorem{de}[thm]{Definition}
\newtheorem{rem}[thm]{Remark}
\newtheorem{remark}[thm]{Remark}
\newtheorem{example}[thm]{Example}
\newtheorem{convention}[thm]{Convention}
\newtheorem{algorithm}[thm]{Algorithm}
\newcommand{\Z}{\mathbb{Z}}
\newcommand{\id}{\mathrm{id}}
\newcommand{\lmlt}[1]{\mathrm{LMlt}(#1)}
\newcommand{\ld}{\backslash}
\newcommand{\Aut}{\mathop{\mathrm{Aut}}}
\newcommand{\LMlt}{\mathop{\mathrm{LMlt}}}
\newcommand{\eqrel}[2]{\mathrel{\stackrel{\scriptstyle\eqref{#1}}{#2}}}
\numberwithin{equation}{section}
\begin{document}

\title{The construction of multipermutation solutions of the Yang-Baxter equation of level 2}

\author{P\v remysl Jedli\v cka}
\author{Agata Pilitowska}
\author{Anna Zamojska-Dzienio}

\address{(P.J.) Department of Mathematics, Faculty of Engineering, Czech University of Life Sciences, Kam\'yck\'a 129, 16521 Praha 6, Czech Republic}
\address{(A.P., A.Z.) Faculty of Mathematics and Information Science, Warsaw University of Technology, Koszykowa 75, 00-662 Warsaw, Poland}

\email{(P.J.) jedlickap@tf.czu.cz}
\email{(A.P.) A.Pilitowska@mini.pw.edu.pl}
\email{(A.Z.) A.Zamojska-Dzienio@mini.pw.edu.pl}

\keywords{Yang-Baxter equation, set-theoretic solution, multipermutation solution, one-sided quasigroup, birack, left distributivity, rack.}
\subjclass[2010]{Primary: 16T25. Secondary: 20N02, 20B25.}

\date{\today}

\begin{abstract}
We study involutive set-theoretic solutions of the Yang-Baxter equation of multipermutation level 2. These solutions happen to fall into two classes -- distributive ones and non-distributive ones. The distributive ones can be effectively constructed using a set of abelian groups and a matrix of constants. Using this construction, we enumerate all distributive involutive solutions up to size 14. The non-distributive solutions can be also easily constructed, using a distributive solution and a permutation.
\end{abstract}

\maketitle
\section{Introduction}

The Yang-Baxter equation is a fundamental equation occurring in integrable models in statistical mechanics and quantum field theory~\cite{Jimbo}.
Let $V$ be a vector space. A {\em solution of the Yang--Baxter equation} is a linear mapping $r:V\otimes V\to V\otimes V$ such that
\[
(id\otimes r) (r\otimes id) (id\otimes r)=(r\otimes id) (id\otimes r) (r\otimes id).
\]
Description of all possible solutions seems to be extremely difficult and therefore
there were some simplifications introduced (see e.g. \cite{Dr90}).

Let $X$ be a basis of the space $V$ and let $\sigma:X^2\to X$ and $\tau: X^2\to X$ be two mappings.
We say that $(X,\sigma,\tau)$ is a {\em set-theoretic solution of the Yang--Baxter equation} if
the mapping $x\otimes y \mapsto \sigma(x,y)\otimes \tau(x,y)$ extends to a solution of the Yang--Baxter
equation. It means that $r\colon X^2\to X^2$, where $r=(\sigma,\tau)$ satisfies the \emph{braid relation}:

\begin{equation}\label{eq:braid}
(id\times r)(r\times id)(id\times r)=(r\times id)(id\times r)(r\times id).
\end{equation}

A solution is called {\em non-degenerate} if the mappings $\sigma(x,\_)$ and $\tau(\_\,,y)$ are bijections,
for all $x,y\in X$.
A solution $(X,\sigma,\tau)$ is {\em involutive} if $r^2=\mathrm{id}_{X^2}$, and it is
\emph{square free} if $r(x,x)=(x,x)$, for every $x\in X$.

\begin{convention}

All solutions, we study in this paper, are set-theoretic, non-degenerate and involutive so we will call them simply \emph{solutions}. The set $X$ can be of arbitrary cardinality.

\end{convention}

It is known (see e.g. \cite{S06, GIM08, D15}) that there is a one-to-one correspondence between solutions of the Yang-Baxter equation and involutive \emph{biracks} $(X,\circ,\ld_{\circ},\bullet,/_{\bullet})$ -- algebras which have a structure of two one-sided quasigroups $(X,\circ,\ld_{\circ})$ and $(X,\bullet,/_{\bullet})$ and satisfy some additional identities \eqref{eq:b1}--\eqref{eq:rinv}. This fact allows one to characterize solutions of the Yang-Baxter equation applying universal algebra tools.

In \cite[Section 3.2]{ESS} Etingof, Schedler and Soloviev introduced, for each solution $(X,\sigma,\tau)$, the equivalence relation $\sim$ on the set $X$: for each $x,y\in X$

\[
x\sim y\quad \Leftrightarrow\quad \tau(\_\,,x)=\tau(\_\,,y).
\]
They showed that the quotient set $X/\mathord{\sim}$ can be again endowed
with a structure of a solution and they call such a solution the {\em retraction} of the solution~$X$ and denote it by
$\mathrm{Ret}(X)$. A solution~$X$ is said to be a multipermutation solution of level~$k$, if $k$ is the smallest integer
such that $|\mathrm{Ret}^k(X)|=1$.
Since then many results appeared that study multipermutation solutions,
often of a small level, e.g. \cite[Section 8]{GIC12} or \cite{GIM11} which focused on the quantum spaces of (finite) solutions with multipermutation level~2.
Square-free multipermutation solutions are always decomposable~\cite{Rump} and several authors gave descriptions of some of these solutions either as a generalized twisted union~\cite{ESS,GI04,CJO10} or a strong twisted
union~\cite{GIC12,GIM08}. We have to say, however, that this approach brings decompositions only and does not offer a direct way how to construct such solutions.
In our work we bring a simple-to-use way how to construct multipermutation solutions of level~2 using abelian groups only. Moreover, our approach works for all such solutions,
not only for square-free ones.

It was proved by Gateva-Ivanova and Cameron~\cite[Proposition 8.2]{GIC12} that, for a square-free solution~$(X,\sigma,\tau)$, we have $\sigma_x=\sigma(x,\_)\in \Aut(X)$, for all $x\in X$, if and only if the solution $X$ is a multipermutation solution of level~$2$. In the language of identities this is equivalent to $(X,\circ,\ld_{\circ})$ being left distributive. It turns out, that this property can be characterized by
several different identities and the equivalence of these identities holds in more general structures. This is why we in Section~2 study left quasigroups and we establish connections between several identities of binary algebras.

Given a square-free solution of a multipermutation level~2 and the associated birack $(X,\circ,\backslash_\circ,\bullet,/_\bullet)$, the algebra $(X,\circ,\ld_{\circ})$ turns out to be a medial quandle (see Lemma \ref{lm:2red}).
The structure of medial quandles was studied in~\cite{JPSZ} and one of the main results was a construction of medial quandles based on a set of abelian groups, a matrix of homomorphisms and a matrix of constants.
In Section~3, we
adapt the construction to the current context (the matrix of homomorphisms is actually not needed here anymore) and we generalize it
so that it may include all distributive solutions, not only those square-free ones.

If a solution $(X,\sigma,\tau)$
is a multipermutation solution of level~$2$
and $e\in X$, then $(X,\sigma',\tau')$, where $\sigma'(x,y)=\sigma(x,\sigma_e^{-1}(y))$ and $\tau'(x,y)=\sigma_e(\tau(x,\sigma_e^{-1}(y)))$, is a distributive solution~ [Theorem \ref{th:2persol}].
This phenomenon is a special case of something called an {\em isotope}.
In Section~4 we study these special isotopes on the level of left quasigroups.

In Section~5 we finally get to biracks and we show what results from
Section~2 and Section~3 tell us in the world of distributive involutive biracks.
Some of these results generalize the latest results by Gateva-Ivanova~\cite{GI18}. We then translate the results into the language
of solutions of the Yang-Baxter equation in Section~7.
We prove that a solution is of multipermutation level~2 if and only if it is medial [Theorem~\ref{th:medial-char}] and
we show equivalent properties for distributive solutions of multipermutation level~2 [Theorem~\ref{th:dist-char}]. We also rephrase how to construct any solution of multipermutation level~2. Additionally, we present a short direct proof that each abelian group (of arbitrary order) is an IYB group (Theorem~\ref{thm7.11}).

In Section~6 we focus on non-distributive biracks associated
to solutions of multipermutation level~2 and the isotopy
that transforms them into distributive ones. This way we can effectively
construct all solutions of multipermutation level~2, which
is used in Section~8 to enumerate small biracks.
Distributive biracks are enumerated up to size~14, for the others of
multipermutation level~2 we give an upper bound only since we lack
an easy-to-use isomorphism criterion.

\section{Left quasigroups}\label{sec2}

In this preliminary section we introduce some identities that
we shall use throughout the text and we show a few examples
of left quasigroups with such properties.

\begin{de}
A \emph{left quasigroup} is an algebra $(X,\circ,\ld_{\circ})$ with two binary operations: the \emph{left multiplication} and the
\emph{left division} respectively, satisfying for every $x,y\in X$ the following conditions:
\begin{equation}\label{eq:lq}
 x\circ(x\ld_{\circ} y)=y=x\ld_{\circ} (x\circ y).
 \end{equation}
\end{de}
A {\em right quasigroup} is defined analogously as an algebra $(X,\bullet,/_{\bullet})$ with two binary operations of \emph{right multiplication} and the
\emph{right division}  satisfying for every $x,y\in X$ the conditions:
\begin{equation}\label{eq:rq}
  (y/_{\bullet} x)\bullet x=y=(y\bullet x)/_{\bullet} x.
 \end{equation}

Condition \eqref{eq:lq} simply means that
all \emph{left translations} $L_x\colon X\to X$ by $x$
\begin{align*}
L_x(a)= x\circ a,
\end{align*}
are bijections, with $L_x^{-1}(a)=x\ld_{\circ} a$. Equivalently, that for every $x,y\in X$, the equation $x\circ u=y$ has the unique solution $u=L_x^{-1}(y)$ in $X$. Similarly, Condition \eqref{eq:rq} gives that
all \emph{right translations} ${\textbf{\emph R}}_x\colon X\to X$ by $x$; ${\textbf{\emph R}}_x(a)= a\bullet x$,
are bijections with ${\textbf{\emph R}}_x^{-1}(a)=a/_{\bullet} x$.

\vskip 2mm
\noindent
It is obvious that if $(X,\circ,\ld_{\circ})$ is a left quasigroup then $(X,\ld_{\circ},\circ)$
  is also a left quasigroup.
\vskip 2mm
The \emph{left multiplication group} of a left quasigroup $(X,\circ,\ld_{\circ})$ is the permutation group generated by left translations,
i.e. the group $\LMlt(X)=\langle L_x:\ x\in X\rangle$.

\begin{de}
Let $m\in \mathbb{N}$. A left quasigroup $(X,\circ,\ld_{\circ})$ is called:
\begin{itemize}
\item \emph{left distributive}, if
for every $x,y,z\in X$:
\begin{align}\label{eq:left}
x\circ(y\circ z)=(x\circ y)\circ(x\circ z)\quad \Leftrightarrow\quad L_xL_y=L_{x\circ y}L_x
\end{align}
\item $m$-\emph{reductive}, if for every $x_0,x_1,x_2,\ldots,x_m\in X$:
\begin{align}\label{eq:mred}
(\ldots((x_0\circ x_1)\circ x_2)\ldots)\circ x_m=(\ldots((x_1\circ x_2)\circ x_3)\ldots)\circ x_m
\end{align}
\item $m$-\emph{permutational}, if for every $x,y,x_1,x_2,\ldots,x_m\in X$:
\begin{align}\label{eq:mper}
(\ldots((x\circ x_1)\circ x_2)\ldots)\circ x_m=(\ldots((y\circ x_1)\circ x_2)\ldots)\circ x_m
\end{align}
\item \emph{medial}, if for every $x,y,z,t\in X$:
\begin{align}\label{eq:med}
(x\circ y)\circ (z\circ t)=(x\circ z)\circ (y\circ t)\quad \Leftrightarrow\quad L_{x\circ y}L_z=L_{x\circ z}L_y
\end{align}
\item \emph{right cyclic}, if it satisfies the \emph{right cyclic} law, i.e. for every $x,y,z\in X$:
\begin{equation}\label{eq:RC}
 (x\ld_{\circ} y)\ld_{\circ}( x \ld_{\circ} z)=(y\ld_{\circ} x)\ld_{\circ}(y\ld_{\circ} z)\quad \Leftrightarrow\quad L^{-1}_{x\ld_{\circ} y}L^{-1}_x=L^{-1}_{y\ld_{\circ} x}L^{-1}_y
 \quad \Leftrightarrow\quad L_xL_{x\ld_{\circ} y}=L_yL_{y\ld_{\circ} x}
 \end{equation}
\item \emph{non-degenerate}, if the mapping
\begin{equation}\label{eq:ND}
 T\colon X\to X;\quad  x\mapsto x\ld_{\circ} x,
  \end{equation}
  is a bijection

\item \emph{idempotent}, if for every $x\in X$
\begin{align}\label{eq:idemp}
x\circ x=x\quad \Leftrightarrow\quad L_x(x)=x\quad \Leftrightarrow\quad L_x^{-1}(x)=x.
\end{align}
\end{itemize}
\end{de}
\noindent
A left distributive left quasigroup is a \emph{rack}. Idempotent racks are called \emph{quandles}.
\vskip 2mm

The condition of left distributivity is well established in the literature. It appeared in a natural way in such areas as low-dimensional topology -- in knot \cite{Car} and braid \cite{D} invariants or in the theory of symmetric spaces \cite{Loo}. Probably at first it was introduced already at the end of 19th century in papers of Peirce \cite{Pei} and Schr${\rm \ddot{o}}$der \cite{Sch}. Recently, Lebed and Vendramin \cite{LV} considered the condition in the context of solutions of the Yang-Baxter equation.

The property of mediality was first investigated as a generalization of the associative law for quasigroups (see Murdoch \cite{Mur} and Sushkievich \cite{Sus}). It appears also in the characterization of mean value functions \cite{Acz}. The first systematic approach to medial groupoids was undertaken by Je\v{z}ek and Kepka in \cite{JK}. Idempotency in the theory of the Yang-Baxter solutions is called \emph{square-freeness}. Idempotent and medial quasigroups are investigated since the middle of 20th century. In the wider context, two monographs \cite{RS85,RS} of Romanowska and Smith are devoted to idempotent and medial algebras called \emph{modes} which are present in different branches of mathematics and find
applications in computer science, economics, physics, and biology.

$2$-reductive groupoids were considered by P\l onka as a special case of \emph{cyclic groupoids} \cite{Plo}. The more general $m$-reductive modes were investigated  in \cite{PRR96} and  \cite{PR}. In \cite{JPSZ} and \cite{JPZ} $m$-reductive quandles were characterized.

Right cyclic quasigroups (under the name \emph{cycle sets}) were introduced by Rump in \cite{Rump}. He showed that there is a correspondence between solutions of the Yang-Baxter equation  and non-degenerate cycle sets (see Theorem \ref{th:rclq}).

Finally, Condition \eqref{eq:mper} was defined by Gateva-Ivanova in \cite[Remark 4.6]{GI18} to describe multipermutation solutions of the Yang-Baxter equation (see Theorem \ref{GI:mper}). Earlier, Gateva-Ivanova and Cameron used Condition \eqref{eq:mred} (see \cite[Theorem 5.15]{GIC12}). They did not name these properties.

\begin{obser}\label{ob:1}
Each $m$-reductive left quasigroup is $m$-permutational and each $m$-permutational, idempotent left quasigroup is $m$-reductive.
\end{obser}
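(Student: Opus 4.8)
The plan is to treat the two implications separately; each one reduces to a single specialization of the relevant defining identity. First I would prove that $m$-reductivity implies $m$-permutationality. The decisive observation is that the right-hand side of \eqref{eq:mred} does not mention the leading variable $x_0$ at all. So, fixing $x_1,\dots,x_m$ and applying \eqref{eq:mred} twice, once with $x_0=x$ and once with $x_0=y$, produces two equalities whose right-hand sides are literally the \emph{same} expression $(\ldots((x_1\circ x_2)\circ x_3)\ldots)\circ x_m$. Transitivity then identifies the two left-hand sides, which is precisely \eqref{eq:mper}. No hypothesis beyond $m$-reductivity enters here, so this direction is unconditional.

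Next I would prove that $m$-permutationality together with idempotency implies $m$-reductivity. I would instantiate \eqref{eq:mper} in the special case $x=x_0$, $y=x_1$, giving
\[
(\ldots((x_0\circ x_1)\circ x_2)\ldots)\circ x_m=(\ldots((x_1\circ x_1)\circ x_2)\ldots)\circ x_m.
\]
Idempotency \eqref{eq:idemp} now collapses the leading product, $x_1\circ x_1=x_1$, so the right-hand side rewrites as the shorter left-associated product $(\ldots((x_1\circ x_2)\circ x_3)\ldots)\circ x_m$. Reading the two sides together yields exactly \eqref{eq:mred}.

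Both steps are elementary, and I do not expect a genuine obstacle; the only place that demands care is the reindexing in the second implication. After replacing $x_1\circ x_1$ by $x_1$, one must check that the remaining left-associated product over $x_1,x_2,\dots,x_m$ coincides termwise with the right-hand side of \eqref{eq:mred}, with the outermost factor still $x_m$ and no index silently shifted. This is a routine verification once the folding of the products is written out explicitly.
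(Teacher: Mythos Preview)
Your argument is correct and is precisely the natural verification the paper has in mind; the paper records this as an Observation without proof, and your two specializations (applying \eqref{eq:mred} with $x_0=x$ and $x_0=y$ for the first implication, and setting $y=x_1$ in \eqref{eq:mper} together with \eqref{eq:idemp} for the second) constitute the intended one-line checks.
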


For a solution of the Yang-Baxter equation,  Gateva-Ivanova considered in \cite[Definition 4.3]{GI18} Condition $(\ast)$ which in the language of left quasigroups $(X,\circ,\ld_{\circ})$ means that
\begin{align*}
\tag{$\ast$} \forall x\in X\quad \exists a\in X\quad  a\circ x=x.
\end{align*}
It is evident, that each idempotent left quasigroup satisfies Condition $(\ast)$.

By Observation \ref{ob:1} we have that each idempotent $m$-permutational left quasigroup is $m$-reductive, for arbitrary $m\in \mathbb{N}$. The same is also true for $m$-permutational left quasigroups which satisfy Condition $(\ast)$ (see also \cite[Proposition 4.7]{GI18}).
\begin{lemma}\label{lm:mper}
Let $(X,\circ,\ld_{\circ})$ be a left quasigroup which satisfies Condition $(\ast)$ and $m\in \mathbb{N}$. Then $(X,\circ,\ld_{\circ})$ is $m$-permutational if and only if it is $m$-reductive.
\end{lemma}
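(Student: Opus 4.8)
The plan is to prove the two implications separately, observing that only one of them uses Condition $(\ast)$. The implication ``$m$-reductive $\Rightarrow$ $m$-permutational'' holds in an arbitrary left quasigroup and is precisely the first assertion of Observation~\ref{ob:1}: if the leading factor of a left-normed product can be deleted, then in particular the value of the product does not depend on it. For this direction I would simply invoke Observation~\ref{ob:1}, with no need for $(\ast)$.

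For the converse, suppose $(X,\circ,\ld_\circ)$ is $m$-permutational and satisfies $(\ast)$, and fix $x_0,x_1,\ldots,x_m\in X$; I want to verify the $m$-reductive law \eqref{eq:mred}. The idea is that $m$-permutationality grants complete freedom in the choice of the leading element of the product, while $(\ast)$ supplies a leading element that $x_1$ absorbs. Applying $(\ast)$ to $x_1$ yields some $a\in X$ with $a\circ x_1=x_1$. By the $m$-permutational law \eqref{eq:mper} I may replace the leading factor $x_0$ by $a$ without changing the product, so
\[
(\ldots((x_0\circ x_1)\circ x_2)\ldots)\circ x_m=(\ldots((a\circ x_1)\circ x_2)\ldots)\circ x_m.
\]
Substituting $a\circ x_1=x_1$ on the right turns it into $(\ldots((x_1\circ x_2)\circ x_3)\ldots)\circ x_m$, which is exactly the right-hand side of \eqref{eq:mred}. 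Since $x_0,\ldots,x_m$ were arbitrary, \eqref{eq:mred} holds.

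I expect no serious obstacle here: the whole converse rests on the single step of choosing, via $(\ast)$, a leading element $a$ that collapses into $x_1$. The only point meriting care is purely clerical, namely confirming that after the substitution $a\circ x_1=x_1$ the left-normed bracketing on the left indeed matches the reduced product on $x_1,x_2,\ldots,x_m$ term by term. With that checked, the equivalence follows for every $m\in\mathbb{N}$.
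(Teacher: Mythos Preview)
Your proposal is correct and follows essentially the same argument as the paper: invoke Observation~\ref{ob:1} for the trivial direction, and for the converse use Condition~$(\ast)$ to produce an element~$a$ with $a\circ x_1=x_1$, then apply $m$-permutationality to replace $x_0$ by~$a$ and collapse the leading factor. The paper's proof is identical up to notation (it writes $a_{x_1}$ for your~$a$).
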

\begin{proof}
We have only to prove that each $m$-permutational left quasigroup which satisfies Condition $(\ast)$ is $m$-reductive. But it is evident. By Condition $(\ast)$ for each $x\in X$ there exists $a_x\in X$ such that $a_x\circ x=x$. Then for every $x_0,x_1,x_2,\ldots,x_m\in X$  we have:
\begin{align*}
(\ldots((x_0\circ x_1)\circ x_2)\ldots)\circ x_m\stackrel{\scriptsize \eqref{eq:mper}}=
(\ldots((a_{x_1}\circ x_1)\circ x_2)\ldots)\circ x_m=
(\ldots((x_1\circ x_2)\circ x_3)\ldots)\circ x_m,
\end{align*}
which completes the proof.
\end{proof}
In this paper we are mainly interested in $2$-reductive and $2$-permutational left quasigroups. In particular,
a left quasigroup $(X,\circ,\ld_{\circ})$ is $2$-reductive if, for every $x,y,z\in X$:
\begin{align}\label{eq:red}
(x\circ y)\circ z=y\circ z \quad \Leftrightarrow\quad L_{x\circ y}=L_y,
\end{align}
and it is $2$-permutational if for every $x,y,z,t\in X$:
\begin{align}\label{eq:2per}
(z\circ x)\circ y=(t\circ x)\circ y\quad \Leftrightarrow\quad L_{z\circ x}=L_{t\circ x}.
\end{align}

\begin{example}\label{ex:nie2red}
The left quasigroup $(\{0,1,2,3\},\circ,\ld_{\circ})$ with the following left multiplication:
\[
  \begin{array}{c|cccc}
   \circ & 0 & 1 & 2 & 3\\
   \hline
   0 & 0 & 1 & 2 &3\\
   1 & 2 & 3 & 0 & 1 \\
   2 & 0 & 1 & 2 & 3\\
   3 & 2 & 3 & 0 &1
  \end{array}
  \]
is a 2-reductive rack and, according to Lemma~\ref{lm:2red}, it is medial.
In this case $L_0=L_2=\mathrm{id}$ and $L_1=L_3=(02)(13)$.
\end{example}

\begin{example}\label{ex:nondistr}
Let $(\{0,1,2,3\},\circ,\ld_{\circ})$ be a left quasigroup with the following left multiplication:
 \[
  \begin{array}{c|cccc}
   \circ & 0 & 1 & 2 & 3\\
   \hline
   0 & 1 & 0 & 3 &2\\
   1 & 3 & 2 & 1 & 0 \\
   2 & 1 & 0 & 3 & 2\\
   3 & 3 & 2 & 1 &0
  \end{array}.
 \]
Clearly, $L_0=L_2=(01)(23)$ and $L_1=L_3=(03)(12)$. One can check that $(\{0,1,2,3\},\circ,\ld_{\circ})$ is both right cyclic and
$2$-permutational but neither left distributive nor $2$-reductive. Additionally, by Corollary \ref{cor:med}, $(\{0,1,2,3\},\circ,\ld_{\circ})$ is medial.
\end{example}

For a left quasigroup $(X,\circ,\ld_{\circ})$, Condition \eqref{eq:left} means that all left translations for every $x\in X$, are automorphisms of $(X,\circ)$, i.e.
for every $x,y,z\in X$
\begin{align}\label{eq:auto}
L_x(y\circ z)=L_x(y)\circ L_x(z).
\end{align}

\begin{lemma}\label{rem:eqival}
Let $(X,\circ,\ld_{\circ})$ be a left quasigroup. Then
\begin{itemize}
\item $(X,\circ,\ld_{\circ})$ is left distributive if and only if $(X,\ld_{\circ},\circ)$ is left distributive.
\item $(X,\circ,\ld_{\circ})$ is $2$-reductive if and only if $(X,\ld_{\circ},\circ)$ is $2$-reductive.
\item $(X,\circ,\ld_{\circ})$ is medial if and only if $(X,\ld_{\circ},\circ)$ is medial.
\item $(X,\circ,\ld_{\circ})$ is idempotent if and only if $(X,\ld_{\circ},\circ)$ is idempotent.
\end{itemize}
\end{lemma}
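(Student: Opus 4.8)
The plan is to reduce all four equivalences to the translation-level reformulations already recorded in the definitions, and then to exploit the single structural fact that the left translations of the swapped quasigroup $(X,\ld_{\circ},\circ)$ are exactly the inverses of those of $(X,\circ,\ld_{\circ})$. Concretely, the left translation by $x$ in $(X,\ld_{\circ},\circ)$ sends $a$ to $x\ld_{\circ}a=L_x^{-1}(a)$, so it equals $L_x^{-1}$; in particular $\LMlt(X,\ld_{\circ},\circ)=\langle L_x^{-1}:x\in X\rangle=\LMlt(X,\circ,\ld_{\circ})$. This observation, together with the remark above that $(X,\ld_{\circ},\circ)$ is again a left quasigroup, is the only ingredient I need: each item then becomes a short manipulation of permutations in which I repeatedly use that every $L_x$ is a bijection. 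Moreover, since the swapped quasigroup of $(X,\ld_{\circ},\circ)$ is $(X,\circ,\ld_{\circ})$ itself, in each item it suffices to prove one implication, the reverse following by symmetry.

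First I would dispatch idempotency, which is immediate: by \eqref{eq:idemp} idempotency of $(X,\circ,\ld_{\circ})$ means $L_x(x)=x$ for all $x$, equivalently $L_x^{-1}(x)=x$, and the latter is exactly idempotency of $(X,\ld_{\circ},\circ)$. For $2$-reductivity I would use \eqref{eq:red}: reductivity of $(X,\circ,\ld_{\circ})$ reads $L_{L_x(y)}=L_y$ for all $x,y$, while reductivity of $(X,\ld_{\circ},\circ)$ reads $L_{L_x^{-1}(y)}^{-1}=L_y^{-1}$, that is $L_{L_x^{-1}(y)}=L_y$. Since $L_x$ is a bijection, substituting $y\mapsto L_x^{-1}(y)$ in one identity produces the other, so the two are equivalent.

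For left distributivity I would rewrite \eqref{eq:left} in the conjugation form $L_xL_yL_x^{-1}=L_{L_x(y)}$, valid for all $x,y$. Substituting $d=L_x^{-1}(y)$ into $L_xL_dL_x^{-1}=L_{L_x(d)}$ and simplifying $L_x(d)=y$ yields $L_{L_x^{-1}(y)}=L_x^{-1}L_yL_x$, and taking inverses gives $L_x^{-1}L_y^{-1}L_x=L_{L_x^{-1}(y)}^{-1}$. Since the left translations of $(X,\ld_{\circ},\circ)$ are the maps $L_x^{-1}$ and $x\ld_{\circ}y=L_x^{-1}(y)$, this last identity is precisely the conjugation form of left distributivity for $(X,\ld_{\circ},\circ)$, as required. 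Finally, for mediality I would start from the translation form \eqref{eq:med}, namely $L_{L_x(y)}L_z=L_{L_x(z)}L_y$, and substitute $y\mapsto L_x^{-1}(u)$, $z\mapsto L_x^{-1}(v)$; the two cancellations $L_x(L_x^{-1}(u))=u$ and $L_x(L_x^{-1}(v))=v$ turn this into $L_uL_{L_x^{-1}(v)}=L_vL_{L_x^{-1}(u)}$. Taking inverses and relabelling $u,v$ back to $z,y$ produces $L_{L_x^{-1}(y)}^{-1}L_z^{-1}=L_{L_x^{-1}(z)}^{-1}L_y^{-1}$, which is exactly the medial law \eqref{eq:med} for $(X,\ld_{\circ},\circ)$.

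The computations are all short, so there is no serious obstacle here; the only thing demanding care is the bookkeeping of which permutation is the translation in which quasigroup, and the verification that every substitution $y\mapsto L_x^{-1}(y)$ is legitimate, which it is precisely because each $L_x$ is a bijection by \eqref{eq:lq}. Assembling the four items then completes the proof.
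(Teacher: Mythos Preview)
Your proof is correct and follows essentially the same strategy as the paper: both reduce each property to an identity among left translations and then exploit that the left translations of $(X,\ld_{\circ},\circ)$ are exactly the $L_x^{-1}$, so that a substitution $y\mapsto L_x^{-1}(y)$ (legitimate since each $L_x$ is a bijection) carries one identity to the other. The only cosmetic difference is that for left distributivity the paper invokes the automorphism reformulation \eqref{eq:auto} (if $L_x$ is an automorphism then so is $L_x^{-1}$), whereas you compute the conjugation identity directly; these are the same argument in slightly different dress.
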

\begin{proof}
If $L_x$ is an automorphism, then $L_x^{-1}$
is clearly an automorphism as well, giving
 Property \eqref{eq:auto}. Furthermore, for every $x,y,z\in X$:
\begin{multline*}
(x\circ y)\circ z=y\circ z \quad \stackrel{\scriptsize y\mapsto x\ld_{\circ} y}\Longrightarrow\quad (x\circ (x\ld_{\circ} y))\circ z=(x\ld_{\circ} y)\circ z \quad \eqrel{eq:lq}\Longleftrightarrow\quad y\circ z=(x\ld_{\circ} y)\circ z\quad \Leftrightarrow\\
L_{x\ld_{\circ} y}(z)=L_y(z)\quad \Leftrightarrow\quad L_{x\ld_{\circ} y}^{-1}(z)=L_y^{-1}(z)\quad \Leftrightarrow\quad(x\ld_{\circ} y)\ld_{\circ} z=y\ld_{\circ} z\stackrel{\scriptsize y\mapsto x\circ y}\Longrightarrow\\
(x\ld_{\circ} (x\circ y))\ld_{\circ} z=(x\circ y)\ld_{\circ} z \quad \eqrel{eq:lq}\Longleftrightarrow
y\ld_{\circ} z=(x\circ y)\ld_{\circ} z\quad \Leftrightarrow\\
L_y^{-1}(z)=L_{x\circ y}^{-1}(z)\quad \Leftrightarrow\quad L_{y}(z)=L_{x\circ y}(z)\quad \Leftrightarrow\quad (x\circ y)\circ z=y\circ z.
\end{multline*}
Similarly, we can show that for every $x,y,z,t\in X$ (see also \cite[Exercise 8.6H]{RS})
\[
(x\circ y)\circ (z\circ t)=(x\circ z)\circ (y\circ t) \quad \Leftrightarrow\quad (x\ld_{\circ} y)\ld_{\circ} (z\ld_{\circ} t)=(x\ld_{\circ} z)\ld_{\circ} (y\ld_{\circ} t).\qedhere
\]
\end{proof}

Next examples show that, for right cyclic or $2$-permutational left quasigroup $(X,\circ,\ld_{\circ})$, the left quasigroup $(X,\ld_{\circ},\circ)$ does not have to be right cyclic or $2$-permutational.
\begin{example}\label{ex:non2per}
Let $(\{0,1,2\},\circ,\ld_{\circ})$ be a left quasigroup with the following left multiplication and left division:

 \[
  \begin{array}{c|ccc}
   \circ & 0 & 1 & 2 \\
   \hline
   0 & 1 & 0 & 2\\
   1 & 2 & 0 & 1 \\
   2 & 2 & 0 & 1
  \end{array}
   \qquad
  \begin{array}{c|ccc}
   \ld_{\circ} & 0 & 1 & 2 \\
   \hline
   0 & 1 & 0 & 2\\
   1 & 1& 2 & 0  \\
   2 & 1& 2 & 0
  \end{array},
 \]
or equivalently, $L_0=(01)=L_0^{-1}$, $L_1=L_2=(021)$ and $L_1^{-1}=L_2^{-1}=(012)$.
This left quasigroup is $2$-permutational, but
\[
0=0\ld_{\circ} 1=(0\ld_{\circ} 1)\ld_{\circ} 1\neq (1\ld_{\circ} 1)\ld_{\circ} 1=2\ld_{\circ} 1=2.
\]
\end{example}

\begin{example}\label{ex:noncyc}
Let $(\{0,1,2,3\},\circ,\ld_{\circ})$ be a left quasigroup with the following left multiplication and left division:
 \[
  \begin{array}{c|cccc}
   \circ & 0 & 1 & 2 & 3\\
   \hline
   0 & 0 & 1 & 3 &2\\
   1 & 2 & 3 & 1 & 0 \\
   2 & 3 & 2 & 0 & 1\\
   3 & 1 & 0 & 2 & 3
  \end{array}
   \qquad
  \begin{array}{c|cccc}
   \ld_{\circ} & 0 & 1 & 2 & 3\\
   \hline
   0 & 0 & 1 & 3 &2\\
   1 & 3 & 2 & 0 & 1 \\
   2 & 2 & 3 & 1 & 0\\
   3 & 1 & 0 & 2 & 3
  \end{array},
 \]
i.e. $L_0=(23)=L_0^{-1}$, $L_1=(0213)=L_2^{-1}$, $L_2=(0312)=L_1^{-1}$ and $L_3=(01)(23)=L_3^{-1}$.
In this case the left quasigroup is right cyclic, but
\[
2=1\circ 0=(0\circ 1)\circ(0\circ 0)\neq(1\circ 0)\circ(1\circ 0)=2\circ 2=0.
\]
\end{example}
Directly from \eqref{eq:left} and Lemma~\ref{rem:eqival}
we obtain that the left distributivity implies,
 for every $x,y\in X$,
\begin{align}\label{eq:ab}
L_{x\circ y}=L_xL_{y}L_x^{-1} \quad {\rm and}\quad L_{x\ld_{\circ} y}=L_x^{-1}L_{y}L_x.
\end{align}
Note also that, for an arbitrary automorphism $\alpha$ of $(X,\circ)$, we have
\begin{align*}
L_{\alpha(x)}(y)=\alpha(x)\circ y=\alpha(x\circ \alpha^{-1}(y))=\alpha L_{x}\alpha^{-1}(y).
\end{align*}

\section{2-reductive racks}

 It is known ~\cite[Theorem 5.15]{GIC12} that a~square-free multipermutation solutions of level~2
 is 2-reductive. It turns out that 2-reductivity has connections
 to other identities presented in Section~2.
 We study all these connections on the class of racks
 which are, after all, an interesting class itself, having
 many applications, e.g. in knot theory \cite{FR}, \cite[Chapter 5]{EN}.
 Moreover, we can apply existing tools, like a construction
 using affine meshes which is presented in the second half of this section.

\begin{lemma}\label{lem:cycabel}
Let $(X,\circ,\ld_{\circ})$ be a rack. The following conditions are equivalent:
\begin{enumerate}
\item $(X,\circ,\ld_{\circ})$ is right cyclic;
\item the group $\LMlt(X)$ is abelian;
\item $(X,\ld_{\circ},\circ)$ is right cyclic;
\item $(X,\circ,\ld_{\circ})$ is $2$-reductive.
\end{enumerate}
\end{lemma}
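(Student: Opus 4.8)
The plan is to route all four conditions through condition (2), that $\LMlt(X)$ is abelian, exploiting the left-distributivity identities \eqref{eq:ab}. Throughout I use that a rack is in particular left distributive, so that $L_{x\circ y}=L_xL_yL_x^{-1}$ and $L_{x\ld_{\circ} y}=L_x^{-1}L_yL_x$ for all $x,y\in X$.

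First I would establish $(1)\Leftrightarrow(2)$ and $(4)\Leftrightarrow(2)$, both of which are short substitutions. By \eqref{eq:RC}, right cyclicity is equivalent to $L_xL_{x\ld_{\circ} y}=L_yL_{y\ld_{\circ} x}$ for all $x,y$; substituting $L_{x\ld_{\circ} y}=L_x^{-1}L_yL_x$ and $L_{y\ld_{\circ} x}=L_y^{-1}L_xL_y$ from \eqref{eq:ab}, the left-hand side collapses to $L_yL_x$ and the right-hand side to $L_xL_y$. Hence $(1)$ holds iff $L_xL_y=L_yL_x$ for all $x,y\in X$, and since the $L_x$ generate $\LMlt(X)$ this is exactly $(2)$. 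Similarly, by \eqref{eq:red}, $2$-reductivity means $L_{x\circ y}=L_y$ for all $x,y$; using $L_{x\circ y}=L_xL_yL_x^{-1}$ from \eqref{eq:ab} this reads $L_xL_yL_x^{-1}=L_y$, i.e. $L_xL_y=L_yL_x$, which is again $(2)$. So $(1)$, $(2)$ and $(4)$ are all equivalent.

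Finally I would obtain $(2)\Leftrightarrow(3)$ by duality rather than by a fresh calculation. By Lemma \ref{rem:eqival} the opposite algebra $(X,\ld_{\circ},\circ)$ is again a rack, and its left translations are the maps $a\mapsto x\ld_{\circ} a=L_x^{-1}(a)$, so its left multiplication group is $\langle L_x^{-1}:x\in X\rangle=\LMlt(X)$, the very same group. Applying the already established equivalence $(1)\Leftrightarrow(2)$ to the rack $(X,\ld_{\circ},\circ)$ then shows that $(X,\ld_{\circ},\circ)$ is right cyclic iff its left multiplication group $\LMlt(X)$ is abelian, which is precisely $(2)$. This closes the cycle of equivalences.

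The computations are routine once \eqref{eq:ab} is in hand, and I do not anticipate a genuine obstacle; the only point requiring care is the duality step, where one must check that passing from $(X,\circ,\ld_{\circ})$ to $(X,\ld_{\circ},\circ)$ both preserves rackhood (guaranteed by Lemma \ref{rem:eqival}) and leaves the left multiplication group unchanged, so that $(1)\Leftrightarrow(2)$ may legitimately be reused for condition $(3)$.
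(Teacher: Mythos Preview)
Your proof is correct and follows essentially the same approach as the paper: both route all conditions through (2) using the conjugation formulae \eqref{eq:ab}, and your treatments of $(1)\Leftrightarrow(2)$ and $(4)\Leftrightarrow(2)$ are identical to the paper's. The only difference is cosmetic: for $(2)\Leftrightarrow(3)$ the paper writes out the right cyclic law for $(X,\ld_{\circ},\circ)$ explicitly and reduces it to $L_xL_y=L_yL_x$ by the same substitution, whereas you package this as a duality argument (same rackhood via Lemma~\ref{rem:eqival}, same group $\LMlt$, reuse $(1)\Leftrightarrow(2)$), which is a clean shortcut but not a different idea.
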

\begin{proof}
In a rack, by \eqref{eq:ab}, the conditions (2) and (4) are equivalent:
\begin{align*}
&L_y=L_{x\circ y}=L_xL_yL_x^{-1}\quad \Leftrightarrow\quad
L_yL_x=L_xL_y.
\end{align*}
Furthermore,
by \eqref{eq:ab} and \eqref{eq:RC} for every $x,y,z\in X$ we have:
\begin{multline*}
(x\ld_{\circ} y)\ld_{\circ}(x\ld_{\circ} z)=(y\ld_{\circ} x)\ld_{\circ}(y\ld_{\circ} z) \quad \Leftrightarrow\quad
L_{x\ld_{\circ} y}^{-1}L_x^{-1}=L_{y\ld_{\circ} x}^{-1}L_y^{-1} \quad \Leftrightarrow\quad
L_{x}L_{x\ld_{\circ} y}=L_yL_{y\ld_{\circ} x}\\
\Leftrightarrow\quad
L_{x}L_x^{-1}L_yL_x=L_{y}L_y^{-1}L_xL_y\quad \Leftrightarrow\quad
L_{x}L_{y}=L_yL_{x}\quad \Leftrightarrow\quad
L_{x}L_{y}L_{x}^{-1}L_x=L_yL_{x}L_{y}^{-1}L_y\\
\Leftrightarrow \quad
L_{x\circ y}L_{x}=L_{y\circ x}L_{y}\quad \Leftrightarrow\quad
(x\circ y)\circ(x\circ z)=(y\circ x)\circ(y\circ z),
\end{multline*}
which completes the proof.
\end{proof}

\begin{corollary}\label{cor:2red}
Let $(X,\circ,\ld_{\circ})$ be a right cyclic left quasigroup.
Then the following conditions are equivalent:
\begin{enumerate}
\item $(X,\circ,\ld_{\circ})$ is a rack;
\item $(X,\circ,\ld_{\circ})$ is $2$-reductive.
\end{enumerate}
\end{corollary}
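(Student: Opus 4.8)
The plan is to prove Corollary~\ref{cor:2red} as a straightforward consequence of Lemma~\ref{lem:cycabel}, being careful about which direction of implication is free and which requires the right cyclic hypothesis that is already assumed in the statement. Note that Lemma~\ref{lem:cycabel} is stated for racks, establishing that in a rack the four conditions (right cyclic, $\LMlt(X)$ abelian, $(X,\ld_{\circ},\circ)$ right cyclic, and $2$-reductive) are all equivalent. Here the standing assumption is instead that $(X,\circ,\ld_{\circ})$ is right cyclic, and we must show that being a rack is equivalent to being $2$-reductive. So the roles of hypothesis and conclusion are shuffled relative to the lemma, and I cannot simply quote it verbatim.

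First I would prove the implication $(1)\Rightarrow(2)$. If $(X,\circ,\ld_{\circ})$ is a rack, then by assumption it is also right cyclic, so both hypotheses of Lemma~\ref{lem:cycabel} are met and the lemma directly yields that $(X,\circ,\ld_{\circ})$ is $2$-reductive (condition (4) of that lemma). This direction is essentially immediate.

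The substantive direction is $(2)\Rightarrow(1)$: assuming $(X,\circ,\ld_{\circ})$ is right cyclic and $2$-reductive, I must show it is a rack, i.e. left distributive. I expect this to be the main obstacle, precisely because Lemma~\ref{lem:cycabel} was proved \emph{within} the class of racks and its computations used identity~\eqref{eq:ab}, which is itself a consequence of left distributivity; hence I cannot reuse those computations here. Instead I would argue directly from $2$-reductivity and right cyclicity. The key idea: $2$-reductivity~\eqref{eq:red} says $L_{x\circ y}=L_y$ for all $x,y$, so the left translation $L_{x\circ y}$ depends only on $y$. To get left distributivity in the form $L_xL_y=L_{x\circ y}L_x$~\eqref{eq:left}, it then suffices to show $L_xL_y=L_yL_x$, i.e. that all left translations commute; substituting $L_{x\circ y}=L_y$ into the target identity reduces left distributivity (given $2$-reductivity) exactly to commutativity of $\LMlt(X)$. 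So the real task is to derive commutativity of left translations from right cyclicity together with $2$-reductivity alone, without presupposing the rack property.

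To extract that commutativity I would exploit the right cyclic law in its translation form $L_xL_{x\ld_{\circ} y}=L_yL_{y\ld_{\circ} x}$ from~\eqref{eq:RC}, and combine it with the $2$-reductive relation. Using $2$-reductivity one can rewrite the translations indexed by the divided elements: writing $L_{x\ld_{\circ} y}$ and $L_{y\ld_{\circ} x}$ and applying~\eqref{eq:red} (noting $x\circ(x\ld_{\circ} y)=y$, so $x\ld_{\circ} y$ and $y$ are related through a left translation), one should be able to collapse the right cyclic identity into the bare commutativity statement $L_xL_y=L_yL_x$. The care needed is to track the indices correctly under the left quasigroup laws~\eqref{eq:lq} when passing between $\circ$ and $\ld_{\circ}$, since here we lack the clean conjugation formula~\eqref{eq:ab} that the lemma enjoyed. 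Once commutativity of $\LMlt(X)$ is in hand, left distributivity follows by the substitution described above, completing the proof that the $2$-reductive right cyclic left quasigroup is a rack.
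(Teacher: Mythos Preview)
Your proposal is correct and follows essentially the same route as the paper. Your only hesitation—about ``tracking indices correctly'' when simplifying $L_{x\ld_\circ y}$—is unnecessary: substituting $y\mapsto x\ld_\circ y$ in the $2$-reductive law $L_{x\circ y}=L_y$ immediately gives $L_{x\ld_\circ y}=L_y$, whence the right cyclic identity $L_xL_{x\ld_\circ y}=L_yL_{y\ld_\circ x}$ collapses to $L_xL_y=L_yL_x$, and then $L_{x\circ y}L_x=L_yL_x=L_xL_y$ is left distributivity, exactly as the paper writes in one line.
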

\begin{proof}
If $(X,\circ,\ld_{\circ})$ is a rack then
by Lemma \ref{lem:cycabel} it is 2-reductive.

Conversely, by 2-reductivity of the right cyclic left quasigroup $(X,\circ,\ld_{\circ})$ we have
\begin{align*}
L_xL_{x\ld_{\circ} y}=L_yL_{y\ld_{\circ} x}\quad  \Rightarrow\quad  L_xL_y=L_yL_x=L_{x \circ y}L_{x},
\end{align*}
which shows that $(X,\circ,\ld_{\circ})$ is left distributive.
\end{proof}

\begin{lemma}\label{lm:2red}
Let $(X,\circ,\ld_{\circ})$ be a $2$-reductive left quasigroup. Then the following conditions are equivalent:
\begin{enumerate}
\item $(X,\circ,\ld_{\circ})$ is left distributive;
\item the group $\LMlt(X)$ is abelian;
\item $(X,\circ,\ld_{\circ})$ is right cyclic;
\item $(X,\ld_{\circ},\circ)$ is right cyclic;
\item $(X,\circ,\ld_{\circ})$ is medial.
\end{enumerate}
\end{lemma}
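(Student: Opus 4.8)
The plan is to show that each of the conditions (1), (3), (4), (5) is equivalent to condition (2), that $\LMlt(X)$ is abelian; since all four then share a common equivalent, the whole list collapses to a single equivalence class. The driving force is $2$-reductivity in its translation form \eqref{eq:red}, namely $L_{x\circ y}=L_y$ for all $x,y$. Before touching the individual conditions I would record one further consequence: substituting $y\mapsto x\ld_{\circ} y$ into $L_{x\circ y}=L_y$ and using $x\circ(x\ld_{\circ} y)=y$ from \eqref{eq:lq} gives $L_{x\ld_{\circ} y}=L_y$ as well. So under $2$-reductivity a left translation indexed by any $\circ$- or $\ld_{\circ}$-product collapses to the translation by its right factor, and I expect each of the five structurally different identities to degenerate to the single relation $L_xL_y=L_yL_x$.

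First I would dispose of the identities whose translation forms are already displayed in Section~\ref{sec2}. Left distributivity \eqref{eq:left} reads $L_xL_y=L_{x\circ y}L_x$; replacing $L_{x\circ y}$ by $L_y$ turns this into $L_xL_y=L_yL_x$, which is precisely (2), so $(1)\Leftrightarrow(2)$. Mediality \eqref{eq:med} reads $L_{x\circ y}L_z=L_{x\circ z}L_y$; the two substitutions $L_{x\circ y}=L_y$ and $L_{x\circ z}=L_z$ convert it to $L_yL_z=L_zL_y$, again (2), so $(5)\Leftrightarrow(2)$.

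Next the two right-cyclic conditions, where the auxiliary relation $L_{x\ld_{\circ} y}=L_y$ does the work. Using the last translation form in \eqref{eq:RC}, right cyclicity of $(X,\circ,\ld_{\circ})$ is $L_xL_{x\ld_{\circ} y}=L_yL_{y\ld_{\circ} x}$, and with $L_{x\ld_{\circ} y}=L_y$ and $L_{y\ld_{\circ} x}=L_x$ this becomes $L_xL_y=L_yL_x$, giving $(3)\Leftrightarrow(2)$. For (4) I would write down what right cyclicity means for the partner quasigroup $(X,\ld_{\circ},\circ)$, whose left translation is $M_x=L_x^{-1}$ and whose division is $\circ$: transcribing \eqref{eq:RC} yields $M_xM_{x\circ y}=M_yM_{y\circ x}$, and replacing each $M$ by $L^{-1}$ and passing to inverses gives $L_{x\circ y}L_x=L_{y\circ x}L_y$, which under $2$-reductivity becomes $L_yL_x=L_xL_y$. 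Hence $(4)\Leftrightarrow(2)$ and the chain is closed. (One could instead note that $\LMlt$ of $(X,\ld_{\circ},\circ)$ equals $\LMlt(X)$, being generated by the inverses $L_x^{-1}$, and combine $2$-reductivity of $(X,\ld_{\circ},\circ)$ from Lemma~\ref{rem:eqival} with the already-proven $(3)\Leftrightarrow(2)$ applied to the partner quasigroup.)

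The only step demanding real care is condition (4): one must transcribe the right-cyclic law for the reversed quasigroup $(X,\ld_{\circ},\circ)$ correctly and keep track of which translation gets inverted, rather than reusing the $\circ$-version verbatim. Everything else is a mechanical substitution of $L_{x\circ y}=L_y$ and $L_{x\ld_{\circ} y}=L_y$, so the conceptual heart of the argument is simply the observation that $2$-reductivity forces four a priori unrelated identities to degenerate to commutativity of the generating left translations.
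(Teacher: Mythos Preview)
Your proof is correct and follows essentially the same approach as the paper: both reduce each translation-form identity to commutativity of the generators $L_x$ via the substitutions $L_{x\circ y}=L_y$ and $L_{x\ld_\circ y}=L_y$. The only cosmetic difference is that the paper routes the implications $(1)\Rightarrow(2),(3),(4)$ through Lemma~\ref{lem:cycabel} (since a $2$-reductive left-distributive left quasigroup is a rack), whereas you compute these directly, centering the hub on condition~(2) instead of~(1).
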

\begin{proof}
Let $(X,\circ,\ld_{\circ})$ be a 2-reductive left quasigroup.
The implications: $(1)\Rightarrow(2)$, $(1)\Rightarrow(3)$ and $(1)\Rightarrow(4)$
directly follow by Lemma \ref{lem:cycabel}.

If the group $\LMlt(X)$ is abelian then
\begin{align*}
L_{x\circ y}L_x=L_yL_x=L_xL_y,
\end{align*}
which gives $(2)\Rightarrow(1)$.

If $(X,\circ,\ld_{\circ})$ is right cyclic then
\begin{align*}
L_{x\circ y}L_x=L_yL_x=L_yL_{y\ld_{\circ} x}=L_xL_{x\ld_{\circ} y}=L_xL_{y},
\end{align*}
and similarly, if $(X,\ld_{\circ},\circ)$ is right cyclic then
\begin{align*}
L_{x\circ y}L_x=L_{y\circ x}L_y=L_xL_{y},
\end{align*}
so  $(3)\Rightarrow(1)$ and $(4)\Rightarrow(1)$ are proved.

Finally,
\begin{align*}
&L_{x\circ y}L_z=L_{x\circ z}L_y \quad \Leftrightarrow\quad L_yL_{z}=L_zL_y\quad \Leftrightarrow\quad L_{z\circ y}L_z=L_zL_y,
\end{align*}
which shows that $(1)\Leftrightarrow(5)$ and completes the proof.
\end{proof}

In  \cite[Theorem 3.14]{JPSZ} David Stanovsk\'y and the authors
of this paper presented a general construction of medial quandles.
It turned out \cite[Theorem 6.9]{JPSZ} that the case of
2-reductive quandles is actually much less complicated
because 2-reductive quandles are rather combinatorial
than algebraic structures. Moreover, the construction
of 2-reductive quandles can be easily generalized for
2-reductive racks, as we shall see below.

\begin{de}
A \emph{trivial affine mesh} over a non-empty set $I$ is the pair
$$\mathcal A=((A_i)_{i\in I},\,(c_{i,j})_{i,j\in I}),$$ where $A_i$ are abelian groups and $c_{i,j}\in A_j$ constants such that
$A_j=\left<\{c_{i,j}\mid i\in I\}\right>$, for every $j\in I$.
\end{de}

If $I$ is a finite set we will usually display a trivial affine mesh as a pair $((A_i)_{i\in I},C)$, where $C=\,(c_{i,j})_{i,j\in I}$ is an $\left|I\right|\times \left|I\right|$ matrix.

\begin{de}
The \emph{sum of a trivial affine mesh} $\mathcal A=((A_i)_{i\in I},\,(c_{i,j})_{i,j\in I})$ over a set $I$
is an algebra $(\bigcup\limits_{i\in I} A_i,\circ,\ld_{\circ})$ defined on the disjoint union of the sets $A_i$,
with two operations
\begin{align*}
a\circ b&=b+c_{i,j},\\
a\ld_{\circ} b&=b-c_{i,j},
\end{align*}
 for every $a\in A_i$ and $b\in A_j$.
\end{de}

\begin{theorem}\label{th:tri}
An algebra $(X,\circ,\ld_{\circ})$ is a $2$-reductive rack if and only if it is the sum of some trivial affine mesh.
The orbits of the action of $\lmlt{X}$ then coincide with the groups of the mesh.
\end{theorem}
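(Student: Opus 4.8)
The plan is to prove both directions by making the group structure of the orbits explicit. For the ``if'' direction, suppose $(X,\circ,\ld_\circ)$ is the sum of a trivial affine mesh $\A=((A_i)_{i\in I},(c_{i,j}))$. I would first verify that it is a left quasigroup: for $a\in A_i$ and $b\in A_j$ one has $a\circ b=b+c_{i,j}$ and $a\ld_\circ b=b-c_{i,j}$, so $a\circ(a\ld_\circ b)=(b-c_{i,j})+c_{i,j}=b$ and likewise $a\ld_\circ(a\circ b)=b$, confirming \eqref{eq:lq}. Crucially, since $a\circ b$ and $b$ lie in the same group $A_j$, the left translation $L_a$ preserves each $A_j$ and acts on it as the translation $x\mapsto x+c_{i,j}$; hence $L_a$ depends only on the index $i$ of the orbit containing $a$, not on $a$ itself. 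Next I would check $2$-reductivity \eqref{eq:red} in the form $L_{a\circ b}=L_b$: since $a\circ b\in A_j$ has the same index $j$ as $b$, both translations act on each $A_k$ by $+c_{j,k}$, so they coincide. Finally, left distributivity follows from Lemma~\ref{lm:2red}, because $\LMlt(X)$ is abelian: each generator $L_a$ acts componentwise as a translation by a fixed constant on each abelian group $A_k$, and translations on an abelian group commute.

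For the ``only if'' direction, let $(X,\circ,\ld_\circ)$ be a $2$-reductive rack. By Lemma~\ref{lm:2red} the group $\LMlt(X)$ is abelian and $L_{a\circ b}=L_b$. I would take $I$ to be the set of orbits of $\LMlt(X)$ acting on $X$, and build an abelian group structure on each orbit. Fix an orbit $O$ and a base point $e\in O$; define $A=O$ with zero element $e$ and addition transported from the (abelian, transitive) action: for $x=\pi(e)$ and $y=\rho(e)$ with $\pi,\rho\in\LMlt(X)$, set $x+y:=\pi\rho(e)$. The key point, to be checked, is that this is well defined, which uses commutativity together with the fact that the stabilizer of $e$ behaves uniformly across the orbit; $2$-reductivity guarantees that $L_a$ depends only on the orbit of $a$, which is exactly what makes the translation action rigid enough to define a regular abelian group. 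Then I would set $c_{i,j}\in A_j$ to be the element $L_a(e_j)-e_j$ where $a$ is any element of orbit $i$ and $e_j$ the base point of orbit $j$; well-definedness in the first index is precisely $2$-reductivity ($L_a$ depends only on $i$), and the generation condition $A_j=\langle c_{i,j}:i\in I\rangle$ holds because $\LMlt(X)$ is generated by the $L_a$ and acts transitively on $A_j$. One then verifies that $a\circ b=b+c_{i,j}$ recovers the original operation.

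The last assertion, that the orbits of $\LMlt(X)$ coincide with the groups of the mesh, is immediate once the correspondence is set up: in the sum of a mesh the translations $L_a$ preserve each $A_j$ and act transitively on it (since the constants $c_{i,j}$ generate $A_j$), so each $A_j$ is a single $\LMlt(X)$-orbit; conversely the construction above literally takes the orbits as the underlying sets of the groups.

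I expect the main obstacle to be the well-definedness of the abelian group structure on each orbit in the ``only if'' direction. Transitivity of the action gives a surjection $\LMlt(X)\to O$, $\pi\mapsto\pi(e)$, but one must quotient by the stabilizer $\mathrm{Stab}(e)$ and check that the induced addition is independent of the chosen representatives; here commutativity of $\LMlt(X)$ is essential, and one must also confirm that the stabilizer is a normal (indeed, since the group is abelian, automatically normal) subgroup giving a genuine regular action of the quotient on $O$. Verifying that the two operations $\circ$ and $\ld_\circ$ of the reconstructed mesh agree with the original ones on the nose—rather than merely up to isomorphism—is the bookkeeping-heavy part, but it reduces to the single identity $a\circ b=L_a(b)=b+c_{i,j}$ once the constants are defined as above.
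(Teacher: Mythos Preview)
Your proposal is correct and follows essentially the same route as the paper: verify the axioms directly on the sum of a mesh, and in the converse direction use the abelian group $\LMlt(X)$ to put a group structure on each orbit via $\alpha(e)+\beta(e)=\alpha\beta(e)$, define $c_{i,j}=L_a(e_j)$, and check that $a\circ b=b+c_{i,j}$ recovers the original operation. The only cosmetic difference is that the paper verifies left distributivity in the forward direction by a direct one-line computation rather than invoking Lemma~\ref{lm:2red}, and in the backward direction it deduces $L_{\alpha(e)}=L_e$ from the automorphism identity $L_{\alpha(e)}=\alpha L_e\alpha^{-1}$ together with commutativity, whereas you phrase it as ``$L_a$ depends only on the orbit of $a$'' via $2$-reductivity; both are equivalent here.
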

\begin{proof}
At first we show that the sum of a trivial affine mesh is a 2-reductive rack with orbits $A_i$, $i\in I$.

Let $a\in A_i$, $b\in A_j$, $c\in A_k$. Obviously
the equation $a\circ x=x+ c_{i,j}=b$ has a unique solution $x=b-c_{i,j}\in A_j$. Furthermore,
\begin{align*}
&(a\circ b)\circ c=(b+c_{i,j})\circ c=c+c_{j,k}=b\circ c,
\end{align*}
and
\begin{align*}
&a\circ (b\circ c)=a\circ (c+c_{j,k})=(c+c_{j,k})+c_{i,k}=(c+c_{i,k})+c_{j,k}=\\
&(b+c_{i,j})\circ(c+c_{i,k})=(a\circ b)\circ (a\circ c).
\end{align*}

For $x\in A_j$ and $a\in A_k$ we have
\[
L_a(x)=a\circ x=x+ c_{k,j}\in A_j.
\]
Thus the group $\LMlt(X)$ acts transitively on $A_j$ if and only if the elements $c_{k,j}$, $k\in I$, generate the group $A_j$.
\vskip 3mm
Now let $(X,\circ,\ld_{\circ})$ be a 2-reductive rack, and choose a transversal $E$ to the orbit decomposition.
By Lemma \ref{lm:2red}, the group $\LMlt(X)$ is abelian. Hence for every $e\in E$, the orbit $Xe=\{\alpha(e)\mid \alpha \in \LMlt(X)\}$ is an abelian group $(Xe,+,-,e)$ with $\alpha(e)+\beta(e)=\alpha\beta(e)$ and $-\alpha(e)=\alpha^{-1}(e)$, for $\alpha,\beta\in \LMlt(X)$.

Let for every $e,f\in E$
\[
c_{e,f}:=e\circ f=L_e(f)\in Xf.
\]

Since $\LMlt(X)$ is abelian, and each $\alpha\in \LMlt(X)$ is an automorphism of $(X,\circ)$, we have
 $\alpha(e)\circ f=L_{\alpha(e)}(f)=\alpha L_e\alpha^{-1}(f)=L_e(f)=e\circ f$. This implies that the set
\[
\{c_{e,f}\mid e\in E\}=\{e\circ f\mid e\in E\}=\{\alpha(e)\circ f\mid \alpha\in \LMlt(X), e\in E\}=\{L_a(f)\mid a\in X\}
\]
generates the group $(Xf,+,-,f)$.
This shows that $(X,\circ,\ld_{\circ})$ is
the sum of the trivial affine mesh $((Xe)_{e\in E},\,(c_{e,f})_{e,f\in E})$ over the set $E$.

Finally, let $a=\alpha(e)\in Xe$ and $b=\beta(f)\in Xf$ with $\alpha, \beta\in \LMlt(X)$. Therefore we obtain
\[
a\circ b=L_a(b)=L_{\alpha(e)}\beta(f)=L_e\beta(f)=L_e(f)+\beta(f)=c_{e,f}+b.
\]
So we verified that the sum of $((Xe)_{e\in E},\,(c_{e,f})_{e,f\in E})$ yields the original rack $(X,\circ,\ld_{\circ})$.
\end{proof}

Note that the sum of such trivial affine mesh is idempotent if and only if $c_{i,i}=0$, for each $i\in I$.

\begin{theorem}\label{th:iso}
Let  $\mathcal A=((A_i)_{i\in I},\,(a_{i,j})_{i,j\in I})$ and  $\mathcal B=((B_i)_{i\in I},\,(b_{i,j})_{i,j\in I})$ be two trivial affine meshes, over the same index set $I$.
Then the sums of $\mathcal A$ and $\mathcal B$ are isomorphic $2$-reductive racks if and only if there is a bijection $\pi$ of the set $I$ and group isomorphisms $\psi_i\colon A_i\to B_{\pi (i)}$ such that $\psi_j(a_{i,j})=b_{\pi (i),\pi (j)}$, for every $i,j\in I$.
\end{theorem}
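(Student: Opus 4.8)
The plan is to prove the two implications separately, relying throughout on Theorem~\ref{th:tri}, which identifies the $\LMlt$-orbits in the sum of a trivial affine mesh with the groups of the mesh, and which records that a left translation $L_a$ with $a\in A_k$ acts on $A_j$ as the translation $b\mapsto b+a_{k,j}$.

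For the sufficiency direction I would assemble the given data $\pi$ and $\psi_i\colon A_i\to B_{\pi(i)}$ into a single map $\Psi$ on the disjoint union by setting $\Psi|_{A_i}=\psi_i$. Since $\pi$ is a bijection and each $\psi_i$ is a bijection onto $B_{\pi(i)}$, the map $\Psi$ is a bijection between the two underlying sets. It then remains only to check that $\Psi$ preserves $\circ$: for $a\in A_i$ and $b\in A_j$ one has $\Psi(a\circ b)=\psi_j(b+a_{i,j})=\psi_j(b)+b_{\pi(i),\pi(j)}$, while $\Psi(a)\circ\Psi(b)=\psi_j(b)+b_{\pi(i),\pi(j)}$ as well, using $\Psi(a)\in B_{\pi(i)}$ and $\Psi(b)\in B_{\pi(j)}$. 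Preservation of $\ld_{\circ}$ is then automatic, since by \eqref{eq:lq} the left division is determined by $\circ$.

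For the necessity direction, suppose $\Psi$ is a rack isomorphism between the two sums. From $\Psi(a\circ b)=\Psi(a)\circ\Psi(b)$ one gets $\Psi L_a\Psi^{-1}=L_{\Psi(a)}$, so $\Psi$ carries $\LMlt$-orbits to $\LMlt$-orbits; combined with Theorem~\ref{th:tri} this yields a bijection $\pi$ of $I$ with $\Psi(A_i)=B_{\pi(i)}$. Reading the intertwining relation on the orbits gives the key identity $\Psi(b+a_{k,j})=\Psi(b)+b_{\pi(k),\pi(j)}$ for all $b\in A_j$ and all $k\in I$ (the right-hand side is the translation by which $L_{\Psi(a)}$, $a\in A_k$, acts on $B_{\pi(j)}$). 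I would then define $\psi_i\colon A_i\to B_{\pi(i)}$ by the base-point correction $\psi_i(g)=\Psi(g)-\Psi(0_i)$, where $0_i$ is the identity of $A_i$; this is a bijection onto $B_{\pi(i)}$ sending $0_i$ to $0$. Taking $b=0_j$ and $k=i$ in the key identity immediately gives $\psi_j(a_{i,j})=\Psi(a_{i,j})-\Psi(0_j)=b_{\pi(i),\pi(j)}$, so the required compatibility condition holds.

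The main obstacle is verifying that each $\psi_i$ is genuinely a group homomorphism, since a priori $\Psi$ restricted to an orbit is only affine. Here the generation hypothesis $A_i=\langle\{a_{k,i}\mid k\in I\}\rangle$ built into the definition of a mesh is essential. For fixed $x\in A_i$, I would compare the two functions $g\mapsto\Psi(x+g)-\Psi(x)$ and $g\mapsto\Psi(g)-\Psi(0_i)$: they agree at $g=0_i$, and by the key identity both increase by $b_{\pi(k),\pi(i)}$ whenever a generator $a_{k,i}$ is added to $g$. Since the $a_{k,i}$ generate $A_i$, the two functions coincide, so $\Psi(x+g)=\Psi(x)+\Psi(g)-\Psi(0_i)$, whence $\psi_i(x+g)=\psi_i(x)+\psi_i(g)$. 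This finishes the construction of $\pi$ and the $\psi_i$ and completes the proof.
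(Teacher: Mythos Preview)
Your argument is correct in both directions; in particular, your use of the generation hypothesis $A_i=\langle\{a_{k,i}\mid k\in I\}\rangle$ to upgrade the a priori affine restriction $\Psi|_{A_i}$ to a genuine group homomorphism (via the translation identity $\Psi(b+a_{k,j})=\Psi(b)+b_{\pi(k),\pi(j)}$ and its inverse) is exactly the point where the argument has content, and you handle it cleanly.

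There is little to compare against here: the paper does not give its own proof but simply cites \cite[Theorem~4.2]{JPSZ} and says that the argument carries over to the $2$-reductive case. Your write-up is precisely the specialization of that argument one would expect---orbits go to orbits by Theorem~\ref{th:tri}, the homomorphism condition reads off the constants, and the base-point shift $\psi_i=\Psi-\Psi(0_i)$ together with the generation condition forces additivity. So your approach is the intended one, just made explicit.
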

\begin{proof}
The proof goes in the same way as the proof of \cite[Theorem 4.2]{JPSZ} for medial quandles in the case of 2-reductive ones.
\end{proof}

\begin{example}
Up to isomorphism, there are exactly five 2-reductive racks of size 3. They are the sums of the following trivial affine meshes:
\begin{itemize}
    \item One orbit: $((\Z_3),(1)).$

        \item Two orbits: $((\Z_2,\Z_1),\left(\begin{smallmatrix}1&0\\0&0\end{smallmatrix}\right))$, $((\Z_2,\Z_1),\left(\begin{smallmatrix}0&0\\1&0\end{smallmatrix}\right))$ and $((\Z_2,\Z_1),\left(\begin{smallmatrix}1&0\\1&0\end{smallmatrix}\right))$.

        \item Three orbits: $((\Z_1,\Z_1,\Z_1),\,\left(\begin{smallmatrix}0&0&0\\0&0&0\\0&0&0\end{smallmatrix}\right))$.
\end{itemize}
\end{example}

Theorems \ref{th:tri} and \ref{th:iso} allow us to enumerate $2$-reductive racks, up to isomorphism. The numbers are presented in Table \ref{Fig:count_medial} in Section \ref{sec:enum}.

\section{2-permutational left quasigroups}\label{sec4a}

Our goal in this paper is to study multipermutation
solutions of level~2. In the language of identities they are
2-permutational, see Theorem~\ref{GI:mper}.
This is why we focus on 2-permutational left quasigroups.
In particular, we link them via a permutation to 2-reductive
left quasigroups studied in the previous section. We start with a few auxiliary lemmas.

\begin{lemma}\label{lm:a5}
Let $(X,\circ,\ld_{\circ})$ be a $2$-permutational left quasigroup. Then for every $x,y,z\in X$
\begin{align}
L_{L_yL_z^{-1}(x)}=L_x.
\end{align}
\end{lemma}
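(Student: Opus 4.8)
The plan is to unfold the $2$-permutational identity \eqref{eq:2per}, which in operator form reads $L_{z\circ x}=L_{t\circ x}$ for all $z,t,x\in X$, and to massage the subscript $L_yL_z^{-1}(x)$ into a form $w\circ x$ to which this identity applies. The key observation is that \eqref{eq:2per} says $L_{z\circ x}$ is independent of the left factor $z$; equivalently, for \emph{any} two elements $u,v\in X$ we have $L_{u\circ x}=L_{v\circ x}$. So if I can exhibit $L_yL_z^{-1}(x)$ as $u\circ x$ for some $u$, then its left translation collapses to $L_{v\circ x}$ for the most convenient $v$, and in particular I want to choose things so that the right-hand side becomes exactly $L_x$.

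First I would record the reformulation: applying the defining property \eqref{eq:lq} of a left quasigroup, the element $x$ can itself be written as $x = z\circ(z\ld_\circ x)$, so $x$ is the image under $L_z$ of $z\ld_\circ x$. This suggests writing the argument of the outer $L$ as the result of a translation. Concretely, set $p := L_z^{-1}(x) = z\ld_\circ x$, so that the subscript is $L_y(p) = y\circ p$. Then $L_{L_yL_z^{-1}(x)} = L_{y\circ p}$, and this is now literally of the shape $L_{(\cdot)\circ p}$ to which \eqref{eq:2per} applies: by $2$-permutationality the left factor $y$ is irrelevant, giving $L_{y\circ p}=L_{w\circ p}$ for any $w\in X$.

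The main step — and the only place requiring a little care — is to pick $w$ so that $w\circ p$ equals $x$, collapsing the right-hand side to $L_x$. Since $p = z\ld_\circ x$, I want $w\circ(z\ld_\circ x) = x$; the natural choice is $w := z$, because then $z\circ(z\ld_\circ x)=x$ by \eqref{eq:lq}. Thus
\begin{align*}
L_{L_yL_z^{-1}(x)} = L_{y\circ(z\ld_\circ x)} = L_{z\circ(z\ld_\circ x)} = L_x,
\end{align*}
where the middle equality is \eqref{eq:2per} (the left factor $y$ replaced by $z$, both composed on the right with $z\ld_\circ x$) and the last equality is \eqref{eq:lq}. I expect the whole argument to be short; the only subtlety is keeping the bookkeeping straight between $L_z^{-1}(x)=z\ld_\circ x$ and the two roles played by $z$, once as the element being inverted and once as the replacement left factor that makes \eqref{eq:lq} fire. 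No further structure (distributivity, reductivity, non-degeneracy) is needed — pure $2$-permutationality together with the quasigroup axioms suffices.
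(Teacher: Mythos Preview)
Your proof is correct and essentially identical to the paper's: both rewrite $L_yL_z^{-1}(x)=y\circ(z\ld_\circ x)$, use \eqref{eq:2per} to replace the left factor~$y$ by~$z$, and then collapse $z\circ(z\ld_\circ x)=x$ via \eqref{eq:lq}. The only cosmetic difference is that the paper applies both sides to a generic element~$t$, while you argue directly at the level of translations.
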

\begin{proof}
By \eqref{eq:2per} and \eqref{eq:lq}, we have
\[
L_{L_yL_z^{-1}(x)}(t)=(y\circ(z\ld_{\circ} x))\circ t=(z\circ(z\ld_{\circ} x))\circ t=x\circ t=L_x(t). \qedhere
\]
\end{proof}

\begin{lemma}\label{lm:a3}
Let $(X,\circ,\ld_{\circ})$ be a medial left quasigroup. Then for every $x,y,z\in X$
\begin{enumerate}
\item $L_{z\circ x}=L_{z\circ z}L_xL_z^{-1}$;
\item $L_{z\ld_{\circ} x}=L_{z\circ z}^{-1}L_xL_z$;
\item $L_xL_z^{-1}L_y=L_yL_z^{-1}L_x$.
\end{enumerate}
\end{lemma}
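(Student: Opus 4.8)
The plan is to reduce everything to the operator form of mediality, namely $L_{a\circ b}L_c=L_{a\circ c}L_b$ for all $a,b,c\in X$, which is precisely Condition~\eqref{eq:med}. Each part of the lemma then follows by specialising the free variables and rearranging the resulting operator equation; no part requires more than elementary manipulation of bijections of $X$, so I do not expect a genuine obstacle, only careful bookkeeping with the inverses $L_z^{-1}$.

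First I would prove (1). Substituting $a:=z$, $b:=x$ and $c:=z$ into the operator mediality gives $L_{z\circ x}L_z=L_{z\circ z}L_x$; composing on the right with $L_z^{-1}$ yields $L_{z\circ x}=L_{z\circ z}L_xL_z^{-1}$, which is exactly (1).

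For (2) there is a small temptation to apply mediality to the dual left quasigroup $(X,\ld_{\circ},\circ)$, which is again medial by Lemma~\ref{rem:eqival}; that route, however, produces the term $L_{z\ld_{\circ} z}$ rather than the desired $L_{z\circ z}$, so I would instead feed a suitable element into (1). Since $(X,\circ,\ld_{\circ})$ is a left quasigroup, the axiom \eqref{eq:lq} gives $z\circ(z\ld_{\circ} x)=x$. Applying (1) with $x$ replaced by $z\ld_{\circ} x$ therefore reads $L_x=L_{z\circ(z\ld_{\circ} x)}=L_{z\circ z}L_{z\ld_{\circ} x}L_z^{-1}$; solving for $L_{z\ld_{\circ} x}$ by composing with $L_{z\circ z}^{-1}$ on the left and $L_z$ on the right gives $L_{z\ld_{\circ} x}=L_{z\circ z}^{-1}L_xL_z$, which is (2).

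Finally, for (3) I would combine (1) with the operator mediality once more. Rewriting (1) as $L_xL_z^{-1}=L_{z\circ z}^{-1}L_{z\circ x}$, I obtain $L_xL_z^{-1}L_y=L_{z\circ z}^{-1}L_{z\circ x}L_y$. The operator mediality with $a:=z$, $b:=x$, $c:=y$ gives $L_{z\circ x}L_y=L_{z\circ y}L_x$, so the right-hand side equals $L_{z\circ z}^{-1}L_{z\circ y}L_x$. Using (1) once more in the form $L_{z\circ z}^{-1}L_{z\circ y}=L_yL_z^{-1}$, this becomes $L_yL_z^{-1}L_x$, establishing (3). The only point to watch throughout is that these identities are read as equalities of bijections of $X$, which is exactly how \eqref{eq:med} is phrased, so all the cancellations are legitimate.
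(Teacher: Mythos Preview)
Your proof is correct and follows essentially the same route as the paper: both obtain (1) by specialising the operator form $L_{a\circ b}L_c=L_{a\circ c}L_b$ of mediality, derive (2) by substituting $x\mapsto z\ld_\circ x$ (the paper does this directly inside the mediality identity, you do it via (1), which amounts to the same thing), and prove (3) by the chain $L_{z\circ x}L_y=L_{z\circ y}L_x$ combined with (1) to peel off the factor $L_{z\circ z}$. The only cosmetic difference is that the paper leaves $L_{z\circ z}$ on both sides and cancels at the end, whereas you premultiply by $L_{z\circ z}^{-1}$ at the outset.
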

\begin{proof}
Directly by mediality we have
\[
L_{z\circ x}L_z= L_{z\circ z}L_x\quad {\rm and}\quad L_{z\circ z}L_{z\ld_{\circ} x}=L_{z\circ (z\ld_{\circ} x)}L_{z}=L_xL_z.
\]
Further, by \eqref{eq:med}
\begin{align*}
L_{z\circ z}L_xL_z^{-1}L_y=
L_{z\circ x}L_y=L_{z\circ y}L_x=
L_{z\circ y}L_zL_z^{-1}L_x=L_{z\circ z}L_yL_z^{-1}L_x,
\end{align*}
which implies
\[
L_xL_z^{-1}L_y=L_yL_z^{-1}L_x. \qedhere
\]
\end{proof}
As we noticed in Examples \ref{ex:non2per} and \ref{ex:noncyc}, for right cyclic or $2$-permutational left quasigroups $(X,\circ,\ld_{\circ})$, the left quasigroup $(X,\ld_{\circ},\circ)$ need not be right cyclic nor $2$-permutational. But under some additional assumptions, they are.
\begin{lemma}\label{lm:a2}
Let $(X,\circ,\ld_{\circ})$ be a $2$-permutational medial left quasigroup. Then both left quasigroups $(X,\circ,\ld_{\circ})$ and $(X,\ld_{\circ},\circ)$ are right cyclic.
\end{lemma}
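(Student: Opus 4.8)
The statement asserts two things at once: that $(X,\circ,\ld_{\circ})$ is right cyclic and that $(X,\ld_{\circ},\circ)$ is right cyclic. My plan is to translate each into a multiplicative identity in $\LMlt(X)$ via the third form of \eqref{eq:RC}. Writing $P_x:=L_{x\circ x}$, right cyclicity of $(X,\circ,\ld_{\circ})$ is $L_xL_{x\ld_{\circ} y}=L_yL_{y\ld_{\circ} x}$, while right cyclicity of $(X,\ld_{\circ},\circ)$ unwinds (its translations are the $L_x^{-1}$ and its division is $\circ$, so after inverting) to $L_{x\circ y}L_x=L_{y\circ x}L_y$. The engine for both will be the identity
\[
P_xL_y=P_yL_x \qquad (x,y\in X),
\]
which I obtain by comparing Lemma~\ref{lm:a3}(1), $L_{z\circ x}=P_zL_xL_z^{-1}$, with $2$-permutationality \eqref{eq:2per}: the latter says $L_{z\circ x}$ is independent of $z$ and hence equals $L_{x\circ x}=P_x$, so $P_zL_xL_z^{-1}=P_x$, i.e. $P_zL_x=P_xL_z$ for all $x,z$.

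The law for $(X,\ld_{\circ},\circ)$ then falls out immediately: by Lemma~\ref{lm:a3}(1), $L_{x\circ y}L_x=P_xL_yL_x^{-1}L_x=P_xL_y$ and likewise $L_{y\circ x}L_y=P_yL_x$, so the two sides agree exactly by the identity above.

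The real work, and the main obstacle, is the law for $(X,\circ,\ld_{\circ})$, because $\LMlt(X)$ need not be abelian here (contrast Lemma~\ref{lm:2red}, where $2$-reductivity forces commutativity). Using Lemma~\ref{lm:a3}(2), $L_{x\ld_{\circ} y}=P_x^{-1}L_yL_x$, the target becomes
\[
L_xP_x^{-1}L_yL_x=L_yP_y^{-1}L_xL_y.
\]
To handle this I would fix a base point $e\in X$ and set $\lambda_x:=L_xL_e^{-1}$, so that $L_x=\lambda_xL_e$. Lemma~\ref{lm:a3}(3) with its third variable specialized to $e$ gives $\lambda_xL_y=\lambda_yL_x$, and multiplying on the right by $L_e^{-1}$ shows that the $\lambda_x$ commute with one another. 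Next, $2$-permutationality gives $P_x=L_{e\circ x}$, so $P_x=p_xL_e$ with $p_x:=\lambda_{e\circ x}$ lying in this abelian family; and the identity above at $z=e$ (equivalently Lemma~\ref{lm:a3}(1) at $z=e$) reads $P_x=P_e\lambda_x$, which rearranges into the single commutation rule $L_e\lambda_x=p_e^{-1}p_xL_e$.

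With these three facts the displayed identity should reduce to a short bookkeeping computation: substituting $L_x=\lambda_xL_e$ and $P_x^{-1}=L_e^{-1}p_x^{-1}$ into each side and using the commutation rule to push every factor $L_e$ to the right, the factors $p_x^{\pm1}$ (resp.\ $p_y^{\pm1}$) cancel, and since the $\lambda$'s and $p$'s all commute both sides collapse to $\lambda_x\lambda_y\,p_e^{-1}L_e^2$. I expect the only delicate point to be keeping track of the non-commuting factors $L_e$, which the rule $L_e\lambda_x=p_e^{-1}p_xL_e$ resolves; conceptually the crux is that $2$-permutationality forces each $P_x$ to be an honest left translation $L_{e\circ x}$, and it is precisely this that makes the commutation rule available. (Note that Lemma~\ref{lm:a5} is not needed for this argument.)
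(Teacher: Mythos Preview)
Your argument is correct, but it is considerably more elaborate than the paper's. For each of the two right-cyclic laws the paper gives a four-step chain that uses only the raw definitions: for $(X,\circ,\ld_\circ)$,
\[
L_xL_{x\ld_{\circ} y}\eqrel{eq:lq}=L_{y\circ (y\ld_{\circ} x)}L_{x\ld_{\circ} y}\eqrel{eq:med}=L_{y\circ (x\ld_{\circ} y)}L_{y\ld_{\circ} x}\eqrel{eq:2per}=L_{x\circ (x\ld_{\circ} y)}L_{y\ld_{\circ} x}\eqrel{eq:lq}=L_yL_{y\ld_{\circ} x},
\]
and an analogous chain handles $(X,\ld_\circ,\circ)$. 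No auxiliary objects are introduced; Lemma~\ref{lm:a3} is not invoked at all.

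By contrast, you first extract the structural identity $P_zL_x=P_xL_z$ from Lemma~\ref{lm:a3}(1) and $2$-permutationality, which dispatches the $(X,\ld_\circ,\circ)$ case cleanly and matches the paper's in spirit. For $(X,\circ,\ld_\circ)$ you then set up a base-point calculus: the elements $\lambda_x=L_xL_e^{-1}$ form a commuting family (via Lemma~\ref{lm:a3}(3)), and the rule $L_e\lambda_x=p_e^{-1}p_xL_e$ lets you normalise. This is sound---your cancellation to $\lambda_x\lambda_y\,p_e^{-1}L_e^2$ is valid because every $p_x=\lambda_{e\circ x}$ lies in the commuting family---but it is a detour. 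What you gain is a clearer picture of $\LMlt(X)$: essentially you are exhibiting it as an abelian group (generated by the $\lambda_x$) twisted by the single element $L_e$, which foreshadows the isotope viewpoint of Section~\ref{sec4a}. What the paper's approach buys is brevity: the mediality identity $L_{a\circ b}L_c=L_{a\circ c}L_b$ already swaps the second subscripts, and one application of \eqref{eq:2per} fixes the first, so no coordinate system is needed.
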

\begin{proof}
Let $x,y\in X$. Then
\begin{align*}
L_xL_{x\ld_{\circ} y}&\stackrel{\scriptsize \eqref{eq:lq}}=L_{y\circ (y\ld_{\circ} x)}L_{x\ld_{\circ} y}\stackrel{\scriptsize \eqref{eq:med}}=L_{y\circ (x\ld_{\circ} y)}L_{y\ld_{\circ} x}\stackrel{\scriptsize \eqref{eq:2per}}=L_{x\circ (x\ld_{\circ} y)}L_{y\ld_{\circ} x}\stackrel{\scriptsize \eqref{eq:lq}}=L_yL_{y\ld_{\circ} x} \quad {\rm and}\\
L_{x\circ y}L_x&\stackrel{\scriptsize \eqref{eq:lq}}=L_{x\circ y}L_{y\ld_{\circ} (y\circ x)}
\stackrel{\scriptsize \eqref{eq:med}}=L_{x\circ (y\ld_{\circ} (y\circ x))}L_{y}\stackrel{\scriptsize \eqref{eq:2per}}=L_{y\circ (y\ld_{\circ} (y\circ x))}L_{y}\stackrel{\scriptsize \eqref{eq:lq}}=L_{y\circ x}L_{y}.\qedhere
\end{align*}
\end{proof}

\begin{lemma}\label{lm:medial-2perm}
Let $(X,\circ,\ld_{\circ})$ be a right cyclic medial left quasigroup. Then both $(X,\circ,\ld_{\circ})$
and $(X,\ld_{\circ},\circ)$ are $2$-permutational.
\end{lemma}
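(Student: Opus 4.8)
The plan is to reduce the whole statement to a single translation identity and then to play the two one-sided structures $(X,\circ,\ld_{\circ})$ and $(X,\ld_{\circ},\circ)$ against each other through Lemma~\ref{lm:a2}. First I would isolate the following equivalence, valid in \emph{any} medial left quasigroup (call it $(\star\star)$): the main operation is $2$-permutational if and only if the opposite quasigroup is right cyclic. Concretely, the third equivalent form in \eqref{eq:RC}, written out for the quasigroup $(X,\ld_{\circ},\circ)$, reads $L_{x\circ y}L_x=L_{y\circ x}L_y$; applying Lemma~\ref{lm:a3}(1) to each side gives $L_{x\circ y}L_x=L_{x\circ x}L_yL_x^{-1}L_x=L_{x\circ x}L_y$ and $L_{y\circ x}L_y=L_{y\circ y}L_x$, so this right cyclicity is exactly the identity $L_{x\circ x}L_y=L_{y\circ y}L_x$. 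On the other hand $(X,\circ,\ld_{\circ})$ is $2$-permutational, i.e.\ $L_{z\circ x}$ is independent of $z$ by \eqref{eq:2per}, precisely when $L_{z\circ x}$ equals its value $L_{x\circ x}$ at $z=x$; since $L_{z\circ x}=L_{z\circ z}L_xL_z^{-1}$ by Lemma~\ref{lm:a3}(1), this is once more $L_{z\circ z}L_x=L_{x\circ x}L_z$. Thus both statements are literally the same identity, and $(\star\star)$ costs only Lemma~\ref{lm:a3}(1).

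With $(\star\star)$ in hand the proof is a three-step bootstrap. By Lemma~\ref{rem:eqival} the opposite quasigroup $(X,\ld_{\circ},\circ)$ is again medial, so I may apply $(\star\star)$ to it: there the roles of $\circ$ and $\ld_{\circ}$ are interchanged (so that Lemma~\ref{lm:a3}(1) for $(X,\ld_{\circ},\circ)$ is Lemma~\ref{lm:a3}(2) for $(X,\circ,\ld_{\circ})$), and $(\star\star)$ now reads: $(X,\ld_{\circ},\circ)$ is $2$-permutational if and only if $(X,\circ,\ld_{\circ})$ is right cyclic. The latter holds by hypothesis, so $(X,\ld_{\circ},\circ)$ is $2$-permutational. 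Next I feed this into Lemma~\ref{lm:a2}: the left quasigroup $(X,\ld_{\circ},\circ)$ is medial and $2$-permutational, hence by Lemma~\ref{lm:a2} both it and its opposite $(X,\circ,\ld_{\circ})$ are right cyclic; in particular $(X,\ld_{\circ},\circ)$ is right cyclic. Finally I apply $(\star\star)$ once more, now to $(X,\circ,\ld_{\circ})$ itself: it is $2$-permutational if and only if $(X,\ld_{\circ},\circ)$ is right cyclic, which we have just obtained. Together with the first step this yields that both $(X,\circ,\ld_{\circ})$ and $(X,\ld_{\circ},\circ)$ are $2$-permutational.

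The step I expect to be genuinely load-bearing is the middle one. Right cyclicity does not transfer to the opposite quasigroup in general — Example~\ref{ex:noncyc} exhibits precisely this failure — so one cannot simply pass from right cyclicity of $(X,\circ,\ld_{\circ})$ to that of $(X,\ld_{\circ},\circ)$. The role of mediality is exactly to force this transfer, and the mechanism is the detour through $2$-permutationality: mediality makes $2$-permutationality of one operation equivalent, via $(\star\star)$, to right cyclicity of the opposite operation, while Lemma~\ref{lm:a2} is the tool that manufactures a \emph{new} right cyclicity out of $2$-permutationality. If one instead tried to derive the target identity $L_{x\circ x}L_y=L_{y\circ y}L_x$ head-on from \eqref{eq:RC} and \eqref{eq:med}, one merely keeps rewriting right cyclicity into equivalent shapes (for instance $L_yL_{x\circ x}^{-1}L_x^2=L_xL_{y\circ y}^{-1}L_y^2$, obtained from Lemma~\ref{lm:a3}(2) and (3)) without ever closing the loop; the bootstrap above, which turns this lemma into a clean converse of Lemma~\ref{lm:a2}, is what sidesteps that impasse.
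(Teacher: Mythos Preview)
Your proof is correct. Your equivalence $(\star\star)$ is well set up, and the three-step bootstrap via Lemma~\ref{lm:a2} closes cleanly. The appeal to Lemma~\ref{rem:eqival} to ensure that Lemma~\ref{lm:a3}(1) (and hence $(\star\star)$) is available on both $(X,\circ,\ld_\circ)$ and $(X,\ld_\circ,\circ)$ is the only delicate point, and you handle it.

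The paper proceeds somewhat differently. Its first step is effectively your application of $(\star\star)$ to the opposite quasigroup: from right cyclicity it shows $L_{y\ld_\circ x}^{-1}=L_{x\ld_\circ y}^{-1}L_x^{-1}L_y=L_{x\ld_\circ x}^{-1}$ via the medial identity for $(X,\ld_\circ,\circ)$, which is exactly your identity $L_{z\circ z}L_x=L_{x\circ x}L_z$ transported to the other operation. For the second half, however, the paper does \emph{not} loop back through Lemma~\ref{lm:a2}. Instead it derives from mediality the formula $L_{x\circ z}=L_yL_zL_{x\ld_\circ y}^{-1}$ (obtained by substituting $y\mapsto x\ld_\circ y$ in~\eqref{eq:med}) and then computes
\[
L_{x\circ y}=L_{x\circ z}L_yL_z^{-1}=L_yL_zL_{x\ld_\circ y}^{-1}L_yL_z^{-1}=L_yL_zL_{y\ld_\circ y}^{-1}L_yL_z^{-1},
\]
the last equality using the already-established $2$-permutationality of $(X,\ld_\circ,\circ)$. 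Since the right-hand side no longer contains~$x$, the claim for $(X,\circ,\ld_\circ)$ follows directly.

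So the two arguments diverge in the second step: the paper's is a single self-contained calculation, while yours packages the symmetry via $(\star\star)$ and invokes Lemma~\ref{lm:a2}, making this lemma appear as its clean converse. Your route is conceptually tidier and explains \emph{why} mediality bridges the gap left by Example~\ref{ex:noncyc}; the paper's route is shorter once one has the right medial substitution in hand and avoids the external dependence on Lemma~\ref{lm:a2}.
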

\begin{proof}
We prove the claim first for $(X,\ld_\circ,\circ)$
using Lemma~\ref{rem:eqival}. Note that Condition \eqref{eq:med} for $(X,\ld_\circ,\circ)$ means that for $x,y,z\in X$
\begin{align}\label{eq:med_inv}
L_{x\ld_\circ y}^{-1}L_z^{-1}=L_{x\ld_\circ z}^{-1}L_y^{-1}\quad \Leftrightarrow\quad L^{-1}_{x\ld_\circ y}L_z^{-1}L_y=L^{-1}_{x\ld_\circ z}.
\end{align}
Hence,
\[L_{y\ld_\circ x}^{-1} \eqrel{eq:RC}= L_{x\ld_\circ y}^{-1}L_x^{-1}L_y
 \eqrel{eq:med_inv}=L_{x\ld_\circ x}^{-1}
\]
and the right-hand side does not depend on~$y$.
Now, for $(X,\circ,\ld_\circ)$, we notice that substituting
$y\mapsto x\ld_\circ y$ in \eqref{eq:med} we get
$L_{x\circ z}=L_yL_zL_{x\ld_\circ y}^{-1}$ which we use in
\[
 L_{x\circ y} \eqrel{eq:med}= L_{x\circ z}L_yL_z^{-1}
 = L_yL_zL_{x\ld_\circ y}^{-1}L_yL_z^{-1}
 = L_yL_zL_{y\ld_\circ y}^{-1}L_yL_z^{-1},
\]
where the last equality follows from $(X,\ld_\circ,\circ)$
being $2$-permutational. Again, the right-hand side does not depend on~$x$, which finishes the proof.
\end{proof}

In the theory of quasigroups (see e.g. \cite[Section II.2]{P}), there
is a standard method, called {\em isotopy}, how to derive a quasigroup from another quasigroup. We do not need this
notion in the full generality, we shall present
here a special case only.

\begin{de}\label{rem:iso}
Let $(X,\circ,\ld_{\circ})$ be a left quasigroup and $\pi$ be a bijection of the set $X$. Define on the set $X$ new binary operations:
\begin{align}
&x\ast y:= x\circ\pi(y)=L_x\pi (y)\quad {\rm and}\label{eq:iso1}\\
&x\ld_{\ast} y:= \pi^{-1}(x\ld_{\circ} y)=\pi^{-1}L_x^{-1}(y). \label{eq:iso2}
\end{align}
The algebra $(X,*,\ld_*)$ is called the {\em $\pi$-isotope} of
$(X,\circ,\ld_\circ)$.
\end{de}

\begin{remark}
It is easy to note that
\begin{align*}
&x\ast(x\ld_{\ast} y)=L_x\pi \pi^{-1}L_x^{-1}(y)=y \quad {\rm and}\\
&x\ld_{\ast}(x\ast y)=\pi^{-1}L_x^{-1}L_x\pi (y)=y.
\end{align*}
Therefore $(X,\ast,\ld_{\ast})$ is also a left quasigroup.
To obtain the multiplication table of~$\ast$ for a $\pi$-isotope of a \emph{finite} left quasigroup $(X,\circ,\ld_{\circ})$, one should permute all columns of the multiplication
table of $\circ$ using the permutation~$\pi$.

\end{remark}

\begin{remark}\label{rem:nd}
Let $(X,\circ,\ld_{\circ})$ be a non-degenerate left quasigroup.
It means that the mapping
\[
T\colon X\to X; \quad x\mapsto L_x^{-1}(x)=x\ld_{\circ} x,
\]
is a bijection.  If $\pi$ is a bijection of the set $X$ then the mapping
\[
T_{\pi}\colon X\to X; \quad x\mapsto \pi^{-1}L_x^{-1}(x)=x\ld_{\ast} x,
\]
is a bijection, too. This proves that the left quasigroup $(X,\ast,\ld_{\ast})$, being the $\pi$-isotope of $(X,\circ,\ld_{\circ})$, is non-degenerate.
\end{remark}

\begin{lemma}\label{lm:3cond}
Let $(X,\circ,\ld_{\circ})$ be a left quasigroup and $\pi$ be a bijection of the set $X$. Then the $\pi$-isotope of $(X,\circ,\ld_{\circ})$ is
\begin{enumerate}
\item  $2$-reductive if and only if, for every $x,y\in X$,
\begin{align}\label{eq:isoper1}
L_{L_x\pi (y)}=L_y,
\end{align}
\item $2$-permutational if and only if, for every $x,y,z\in X$,
\begin{align}\label{eq:isoper3}
L_{L_x\pi (z)}=L_{L_y\pi (z)},
\end{align}
\item left distributive if and only if, for every $x,y,z\in X$,
\begin{align}\label{eq:isoper4}
L_{L_x\pi(y)}\pi L_x=L_x\pi L_y.
\end{align}
\end{enumerate}
\end{lemma}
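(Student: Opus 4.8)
The plan is to reduce everything to a single observation about the left translations of the isotope and then read off each of the three equivalences by substituting into the characterizations \eqref{eq:red}, \eqref{eq:2per} and \eqref{eq:left} from Section~\ref{sec2}. Write $L^\ast_x$ for the left translation $a\mapsto x\ast a$ of the $\pi$-isotope $(X,\ast,\ld_\ast)$. Directly from \eqref{eq:iso1} we have $x\ast a=L_x\pi(a)$, so the whole proof rests on the key identity
\[
L^\ast_x=L_x\pi,
\]
a composition of two bijections. Since $\pi$ is a bijection, it can be cancelled from the right of any equation of permutations, and this cancellation is essentially the only tool the argument needs.

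For part (1), I would note that by \eqref{eq:red} the isotope is $2$-reductive precisely when $L^\ast_{x\ast y}=L^\ast_y$ for all $x,y\in X$. Substituting $L^\ast_{x\ast y}=L_{x\ast y}\pi=L_{L_x\pi(y)}\pi$ and $L^\ast_y=L_y\pi$, this condition becomes $L_{L_x\pi(y)}\pi=L_y\pi$, and cancelling $\pi$ on the right yields exactly \eqref{eq:isoper1}. Part (2) is identical in spirit: by \eqref{eq:2per} the isotope is $2$-permutational if and only if $L^\ast_{z\ast x}=L^\ast_{t\ast x}$ for all $x,z,t\in X$, which after the same substitution and cancellation of $\pi$ reads $L_{L_z\pi(x)}=L_{L_t\pi(x)}$, i.e.\ \eqref{eq:isoper3} after renaming the variables.

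For part (3), by \eqref{eq:left} the isotope is left distributive if and only if $L^\ast_xL^\ast_y=L^\ast_{x\ast y}L^\ast_x$ for all $x,y\in X$. Here both sides terminate in $\pi$: the left-hand side equals $L_x\pi L_y\pi$ and the right-hand side equals $L_{L_x\pi(y)}\pi L_x\pi$. Cancelling the rightmost $\pi$ then gives $L_x\pi L_y=L_{L_x\pi(y)}\pi L_x$, which is precisely \eqref{eq:isoper4}.

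I do not expect any genuine obstacle, since every step is a direct rewriting once $L^\ast_x=L_x\pi$ is in hand; the only points requiring care are keeping the composition order of the permutations consistent throughout and cancelling $\pi$ from the correct side of each equation. In effect, the entire content of the lemma is carried by that single formula for $L^\ast_x$.
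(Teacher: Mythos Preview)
Your proposal is correct and follows essentially the same approach as the paper: both expand the identities \eqref{eq:red}, \eqref{eq:2per}, \eqref{eq:left} for the isotope using $x\ast a=L_x\pi(a)$ and then cancel the bijection $\pi$ on the right. The only difference is cosmetic---you phrase the computation in terms of the permutation identity $L^\ast_x=L_x\pi$, while the paper works element-wise---but the content is identical.
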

\begin{proof}
Let $(X,\ast,\ld_{\ast})$ be the $\pi$-isotope of $(X,\circ,\ld_{\circ})$. Hence for every $x,y,z\in X$
\begin{align*}
&(x\ast y)\ast z=y\ast z \quad \Leftrightarrow\quad    L_{L_x\pi (y)}\pi(z)=L_y\pi(z),\\
&(x\ast z)\ast t=(y\ast z)\ast t \quad \Leftrightarrow\quad    L_{L_x\pi (z)}\pi(t)=L_{L_y\pi (z)}\pi(t),\\
&(x\ast y)\ast (x\ast z)=x\ast(y\ast z)\quad \Leftrightarrow\quad
L_{x\ast y} \pi L_x\pi(z)=
L_{L_x\pi(y)}\pi L_x\pi (z)=L_x\pi L_y\pi(z).
\end{align*}
\end{proof}

\begin{corollary}
Let $(X,\circ,\ld_{\circ})$ be a left quasigroup and $\pi$ be a bijection of the set $X$. Then $(X,\circ,\ld_{\circ})$
is $2$-permutational if and only if the $\pi$-isotope of $(X,\circ,\ld_{\circ})$ is $2$-permutational.
\end{corollary}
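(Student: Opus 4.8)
The plan is to reduce everything to the characterization already obtained in Lemma~\ref{lm:3cond}(2), so that no fresh computation with the isotope operations $\ast,\ld_\ast$ is required. First I would recall that, by that lemma, the $\pi$-isotope of $(X,\circ,\ld_{\circ})$ is $2$-permutational if and only if \eqref{eq:isoper3} holds, i.e.
\[
L_{L_x\pi(z)}=L_{L_y\pi(z)}\qquad\text{for all }x,y,z\in X,
\]
where the translations $L$ are understood to be those of the original operation $\circ$.

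Next I would write out the defining condition \eqref{eq:2per} of $2$-permutationality for $(X,\circ,\ld_{\circ})$ itself in its translation form $L_{x\circ w}=L_{y\circ w}$ and rewrite $x\circ w=L_x(w)$, so that $(X,\circ,\ld_{\circ})$ is $2$-permutational exactly when
\[
L_{L_x(w)}=L_{L_y(w)}\qquad\text{for all }x,y,w\in X.
\]
The two displayed conditions are formally identical, except that the inner argument is the free variable $w$ in one and the value $\pi(z)$ in the other, while the varied variables $x,y$ play the same role in both.

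The key (and essentially only) point is then a quantifier-range argument: since $\pi$ is a bijection of $X$, as $z$ runs over $X$ the element $\pi(z)$ runs over all of $X$ as well. Hence the statement ``$L_{L_x\pi(z)}=L_{L_y\pi(z)}$ for all $x,y,z$'' is equivalent to ``$L_{L_x(w)}=L_{L_y(w)}$ for all $x,y,w$'', via the substitution $w=\pi(z)$ in one direction and $z=\pi^{-1}(w)$ in the other. Comparing this equivalence with the two characterizations above yields both implications at once, which completes the proof.

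I do not anticipate any genuine obstacle here: the whole content of the statement is already packaged in Lemma~\ref{lm:3cond}(2), and the corollary merely observes that the isotope condition \eqref{eq:isoper3} differs from the original condition \eqref{eq:2per} only by the bijective reparametrization $z\mapsto\pi(z)$ of one argument, which leaves the universally quantified statement unchanged. The only things to be careful about are keeping track of the fact that it is the family of left translations of $\circ$ (not of $\ast$) that appears in \eqref{eq:isoper3}, and checking that the remaining free variables $x,y$ are untouched by the substitution, so that the equivalence is genuinely two-sided.
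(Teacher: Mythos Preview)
Your proposal is correct and follows essentially the same route as the paper: both invoke Lemma~\ref{lm:3cond}(2) to reduce the question to the equivalence of \eqref{eq:2per} and \eqref{eq:isoper3}, and then observe that these differ only by the bijective substitution $w\leftrightarrow\pi(z)$. The paper treats the two directions slightly asymmetrically (one is called ``clear'', the other handled by $z\mapsto\pi^{-1}(z)$), whereas you package both at once, but there is no substantive difference.
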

\begin{proof}
Clearly, Condition \eqref{eq:2per} implies Condition \eqref{eq:isoper3}, for each permutation $\pi$ of the set $X$. Further, by
Lemma \ref{lm:3cond}(2) it is sufficient to show that left quasigroup $(X,\circ,\ld_{\circ})$ which satisfies Condition \eqref{eq:isoper3}
is $2$-permutational. Indeed, for every $x,y,z\in X$ we have
\begin{align*}
L_{L_x\pi (z)}=L_{L_y\pi (z)}\stackrel{\scriptsize z\mapsto \pi^{-1}(z)}\Rightarrow L_{L_x(z)}=L_{L_y(z)}.
\end{align*}
\end{proof}

\begin{corollary}\label{lm:rozdz2per1}
Let $(X,\circ,\ld_{\circ})$ be a $2$-reductive left quasigroup and $\varrho$ be a bijection on the set $X$.
Then the $\varrho$-isotope of $(X,\circ,\ld_{\circ})$ is a $2$-permutational left quasigroup.
\end{corollary}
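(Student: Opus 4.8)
The plan is to apply Lemma~\ref{lm:3cond}(2), which reduces proving that the $\varrho$-isotope of $(X,\circ,\ld_{\circ})$ is $2$-permutational to verifying Condition \eqref{eq:isoper3}, namely that
\[
L_{L_x\varrho(z)}=L_{L_y\varrho(z)}\qquad\text{for every }x,y,z\in X.
\]
So the goal is to show that the left translation $L_{L_x\varrho(z)}$ does not depend on the choice of $x$. Since $\varrho$ is a fixed bijection, the element $\varrho(z)$ ranges over all of $X$ as $z$ does, so it suffices to understand how $L_{L_x(w)}$ behaves as a function of $x$ for a fixed element $w=\varrho(z)$.

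First I would unwind the left-hand side using the definition of the left translation in the original $2$-reductive left quasigroup: $L_x\varrho(z)=x\circ\varrho(z)$, so $L_{L_x\varrho(z)}=L_{x\circ\varrho(z)}$. Now the key observation is that $2$-reductivity is exactly the identity \eqref{eq:red}, which in translation form reads $L_{x\circ y}=L_y$ for all $x,y\in X$. Applying this with $y=\varrho(z)$ gives immediately
\[
L_{x\circ\varrho(z)}=L_{\varrho(z)},
\]
and the right-hand side depends only on $z$, not on $x$. Consequently $L_{L_x\varrho(z)}=L_{\varrho(z)}=L_{L_y\varrho(z)}$ for all $x,y,z$, which is precisely Condition \eqref{eq:isoper3}.

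Having verified \eqref{eq:isoper3}, I would conclude by Lemma~\ref{lm:3cond}(2) that the $\varrho$-isotope is $2$-permutational, completing the proof. There is essentially no obstacle here: the statement is a direct corollary of Lemma~\ref{lm:3cond} together with the defining identity of $2$-reductivity, and the only thing to be careful about is bookkeeping the isotopy definition correctly so that the relevant translation appearing in \eqref{eq:isoper3} is indeed $L_{x\circ\varrho(z)}$, to which $2$-reductivity applies. The whole argument is a one-line computation, so the main point is simply to identify that $2$-reductivity collapses $L_{x\circ\varrho(z)}$ to something independent of $x$.
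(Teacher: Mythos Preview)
Your proof is correct and essentially matches the paper's approach. The paper gives no explicit proof for this corollary, treating it as immediate from the preceding results: since a $2$-reductive left quasigroup is $2$-permutational (Observation~\ref{ob:1}) and $2$-permutationality is preserved under isotopy (the unnamed corollary just before), the result follows; your direct verification of Condition~\eqref{eq:isoper3} via Lemma~\ref{lm:3cond}(2) and the identity $L_{x\circ y}=L_y$ is the same computation carried out without the intermediate step.
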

In general $\pi$-isotope of $2$-reductive left quasigroup does not have to be $2$-reductive. The left quasigroup from Example
\ref{ex:nondistr} is the $(01)(23)$-isotope of the $2$-reductive left quasigroup from Example \ref{ex:nie2red} but it is not $2$-reductive.
\vskip 2mm

The idea of the next theorem is the following: we already know how to construct 2-reductive racks, using the construction from Section~3. Now, according to Corollary~\ref{lm:rozdz2per1},
$\varrho$-isotopes of these 2-reductive racks are 2-permutational.
And we want these $\varrho$-isotopes to be right cyclic.

\begin{theorem}\label{th:rozdz2per}
Let $(X,\circ,\ld_{\circ})$ be a $2$-reductive left quasigroup and $\varrho$ be a bijection on the set $X$ such that for every $x,y\in X$
\begin{align}
& L_{\varrho(y)}\varrho L_x=L_{\varrho(x)} \varrho L_y \quad \Leftrightarrow\quad \forall z\in X\quad \varrho(y)\circ \varrho(x\circ z)=\varrho(x)\circ\varrho(y\circ z). \label{eq:sigma}
\end{align}
Then the $\varrho$-isotope of $(X,\circ,\ld_{\circ})$ is a $2$-permutational right cyclic left quasigroup.
\end{theorem}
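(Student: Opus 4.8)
The strategy is to split the conclusion into its two halves and to notice that one of them is already available. Since $(X,\circ,\ld_\circ)$ is $2$-reductive, Corollary~\ref{lm:rozdz2per1} tells us at once that its $\varrho$-isotope $(X,\ast,\ld_\ast)$ is $2$-permutational, and this isotope is a left quasigroup by the Remark following Definition~\ref{rem:iso}. Hence the whole force of condition~\eqref{eq:sigma} has to be spent on establishing the right cyclic law, and the cleanest route to it is via mediality together with Lemma~\ref{lm:a2}, which guarantees that a $2$-permutational medial left quasigroup is right cyclic.

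So the plan is to prove that, for a $2$-reductive $(X,\circ,\ld_\circ)$, condition~\eqref{eq:sigma} is precisely mediality of the isotope. Writing $M_a=L_a\varrho$ for the left translation of $\ast$ (so that $M_a(b)=a\ast b$), the medial law $M_{a\ast b}M_c=M_{a\ast c}M_b$ coming from \eqref{eq:med} becomes, after cancelling a trailing $\varrho$,
\[
L_{L_a\varrho(b)}\,\varrho\,L_c=L_{L_a\varrho(c)}\,\varrho\,L_b.
\]
Here I would apply $2$-reductivity of $(X,\circ,\ld_\circ)$ in the form $L_{a\circ w}=L_w$: because $L_a\varrho(b)=a\circ\varrho(b)$, it collapses $L_{L_a\varrho(b)}$ to $L_{\varrho(b)}$ and, similarly, $L_{L_a\varrho(c)}$ to $L_{\varrho(c)}$. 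The outer variable $a$ thus disappears and the displayed identity reduces exactly to $L_{\varrho(b)}\varrho L_c=L_{\varrho(c)}\varrho L_b$, which is nothing but~\eqref{eq:sigma} with the variables renamed. Consequently the hypothesis on $\varrho$ is equivalent to the isotope being medial.

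Assembling the pieces: the isotope is $2$-permutational by Corollary~\ref{lm:rozdz2per1} and medial by the reduction just described, so Lemma~\ref{lm:a2} makes it right cyclic, which is the assertion. The step I expect to be the real crux is this reduction of the medial law: one must recognise that $2$-reductivity is to be applied to the \emph{outer} translation $L_{a\circ\varrho(b)}$, turning it into $L_{\varrho(b)}$ and erasing the dependence on $a$, and then see that the surviving identity coincides verbatim with the defining condition~\eqref{eq:sigma}; everything else is a cited result or a routine cancellation of $\varrho$. Should one prefer to bypass Lemma~\ref{lm:a2}, the right cyclic law $L_a\varrho L_{a\ld_\ast b}=L_b\varrho L_{b\ld_\ast a}$ can instead be verified directly, applying \eqref{eq:sigma} twice and using the derived $2$-reductive identity $L_{x\ld_\circ y}=L_y$ to rewrite $L_{\varrho(a\ld_\ast b)}=L_{a\ld_\circ b}=L_b$.
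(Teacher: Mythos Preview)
Your proof is correct, and it follows a genuinely different route from the paper's own argument. The paper never passes through mediality of the isotope; instead it inverts condition~\eqref{eq:sigma} to the equivalent form $L_x^{-1}\varrho^{-1}L_{\varrho(y)}^{-1}=L_y^{-1}\varrho^{-1}L_{\varrho(x)}^{-1}$, substitutes $x\mapsto\varrho^{-1}(x)$, $y\mapsto\varrho^{-1}(y)$ to obtain $L_{\varrho^{-1}(x)}^{-1}\varrho^{-1}L_y^{-1}=L_{\varrho^{-1}(y)}^{-1}\varrho^{-1}L_x^{-1}$, and then verifies the right cyclic law $(x\ld_\ast y)\ld_\ast(x\ld_\ast z)=(y\ld_\ast x)\ld_\ast(y\ld_\ast z)$ by a direct chain of equalities, alternating applications of this last identity with $2$-reductivity in the form $L_{L_x^{-1}(y)}=L_y$.

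Your approach buys conceptual transparency: it identifies condition~\eqref{eq:sigma} \emph{as} mediality of the isotope (under $2$-reductivity of the base), which explains exactly why this is the right hypothesis and lets the already-proved Lemma~\ref{lm:a2} do the remaining work. The paper's approach is more self-contained and computational, avoiding the detour through mediality at the cost of a longer string of manipulations. Your closing remark about a direct verification ``applying~\eqref{eq:sigma} twice'' is essentially a sketch of the paper's argument, though the paper works on the level of $\ld_\ast$ and inverses rather than with the form $M_aM_{a\ld_\ast b}=M_bM_{b\ld_\ast a}$ you indicate.
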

\begin{proof}
Let $(X,\ast,\ld_{\ast})$ be the $\varrho$-isotope of $(X,\circ,\ld_{\circ})$.
By Corollary \ref{lm:rozdz2per1}, $(X,\ast,\ld_{\ast})$ is $2$-permutational. Further, Condition \eqref{eq:sigma} is equivalent to the following one:
\begin{align}
&L^{-1}_x\varrho^{-1} L^{-1}_{\varrho(y)}=L_y^{-1}\varrho^{-1} L^{-1}_{\varrho(x)} \label{eq:sigma1}.
\end{align}
Substituting $x$ by $\varrho^{-1}(x)$ and $y$ by $\varrho^{-1}(y)$ in \eqref{eq:sigma1} we obtain:
\begin{align}\label{eq:tau}
&L^{-1}_{\varrho^{-1}(x)}\varrho^{-1} L^{-1}_y=L_{\varrho^{-1}(y)}^{-1}\varrho^{-1} L^{-1}_x.
\end{align}
Together with Lemma \ref{rem:eqival} this implies that for $x,y,z\in X$
\begin{align*}
&(x\ld_{\ast} y)\ld_{\ast}(x\ld_{\ast} z)=\varrho^{-1} L^{-1}_{x}(y)\ld_{\ast}\varrho^{-1} L^{-1}_{x}(z)=
\varrho^{-1} L^{-1}_{\varrho^{-1} L^{-1}_{x}(y)}\varrho^{-1} L^{-1}_{x}(z)\stackrel{\scriptsize\eqref{eq:tau}}=\\
&
\varrho^{-1} L^{-1}_{\varrho^{-1}(x)}\varrho^{-1} L^{-1}_{L^{-1}_x(y)}(z)\stackrel{\scriptsize\eqref{eq:red}}=
\varrho^{-1} L^{-1}_{\varrho^{-1} (x)}\varrho^{-1} L^{-1}_y(z)\stackrel{\scriptsize\eqref{eq:tau}}=
\varrho^{-1} L^{-1}_{\varrho^{-1} (y)}\varrho^{-1} L^{-1}_x(z)\stackrel{\scriptsize\eqref{eq:red}}=\\
&\varrho^{-1} L^{-1}_{\varrho^{-1} (y)}\varrho^{-1} L^{-1}_{L^{-1}_y(x)}(z)
\stackrel{\scriptsize\eqref{eq:tau}}=\varrho^{-1} L^{-1}_{\varrho^{-1} L^{-1}_y(x)}\varrho^{-1} L^{-1}_y(z)=
\varrho^{-1} L^{-1}_{y}(x)\ld_{\ast}\varrho^{-1} L^{-1}_{y}(z)=\\
&(y\ld_{\ast} x)\ld_{\ast}(y\ld_{\ast} z),
\end{align*}
which shows that the left quasigroup $(X,\ast,\ld_{\ast})$ is right cyclic.
\end{proof}

\begin{example}
By Lemma \ref{lem:cycabel}, Condition \eqref{eq:sigma} is satisfied by every automorphism $\varrho$ of a $2$-reductive rack, since for $x,y\in X$
\begin{align*}
& L_{\varrho(y)}\varrho L_x=\varrho L_y\varrho^{-1} \varrho L_x=\varrho L_yL_x=\varrho L_xL_y=\varrho L_x\varrho^{-1} \varrho L_y=
L_{\varrho(x)} \varrho L_y.
\end{align*}
\end{example}

On the other hand, each $2$-permutational medial left quasigroup
has as an isotope that is a $2$-reductive rack.

\begin{theorem}\label{th:2perrozdz}
Let $(X,\circ,\ld_{\circ})$ be a left quasigroup and $\pi$ be a bijection on the set $X$ which satisfies Condition \eqref{eq:isoper1} and such that for each $x,y\in X$
\begin{align}
&L_x\pi L_y=L_y\pi L_x.\label{eq:isoper2}
\end{align}
Then the $\pi$-isotope of $(X,\circ,\ld_{\circ})$ is a $2$-reductive rack.
\end{theorem}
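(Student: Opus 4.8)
The plan is to invoke the characterizations established in Lemma~\ref{lm:3cond}, which translate the two properties ``$2$-reductive'' and ``left distributive'' into conditions on the left translations $L_x$ of the original left quasigroup. Recall that a $2$-reductive rack is precisely a $2$-reductive left distributive left quasigroup, so both properties must be verified for the $\pi$-isotope $(X,\ast,\ld_{\ast})$. The strategy is therefore to check each property separately against the appropriate item of Lemma~\ref{lm:3cond}.

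First I would dispose of $2$-reductivity, which is immediate: by Lemma~\ref{lm:3cond}(1) the $\pi$-isotope is $2$-reductive if and only if Condition~\eqref{eq:isoper1} holds, and this is exactly one of the hypotheses. No computation is needed here.

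It then remains to establish left distributivity. By Lemma~\ref{lm:3cond}(3) this is equivalent to Condition~\eqref{eq:isoper4}, namely $L_{L_x\pi(y)}\pi L_x=L_x\pi L_y$ for all $x,y\in X$. This is where the two hypotheses combine: using \eqref{eq:isoper1} to rewrite $L_{L_x\pi(y)}=L_y$, the left-hand side of \eqref{eq:isoper4} collapses to $L_y\pi L_x$, and then the commutation hypothesis \eqref{eq:isoper2} yields $L_y\pi L_x=L_x\pi L_y$, which is precisely the right-hand side. Hence \eqref{eq:isoper4} holds and $(X,\ast,\ld_{\ast})$ is left distributive.

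Honestly, there is no genuine computational obstacle in this argument. The only point requiring care is conceptual rather than technical: one must recognize that the conclusion ``$2$-reductive rack'' splits into the two separately-characterized properties of Lemma~\ref{lm:3cond}, and that the two hypotheses \eqref{eq:isoper1} and \eqref{eq:isoper2} are tailored to supply exactly these two ingredients---the first giving $2$-reductivity outright, the second (together with the first) furnishing distributivity. Once the correct items of Lemma~\ref{lm:3cond} are identified, the proof reduces to a single substitution.
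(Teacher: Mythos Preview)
Your proposal is correct and matches the paper's proof essentially line for line: the paper also invokes Lemma~\ref{lm:3cond}(1) for $2$-reductivity, then verifies Condition~\eqref{eq:isoper4} via the chain $L_{L_x\pi(y)}\pi L_x \stackrel{\eqref{eq:isoper1}}{=} L_y\pi L_x \stackrel{\eqref{eq:isoper2}}{=} L_x\pi L_y$ and concludes left distributivity by Lemma~\ref{lm:3cond}(3).
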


\begin{proof}
Let $(X,\ast,\ld_{\ast})$ be the $\pi$-isotope of $(X,\circ,\ld_{\circ})$. By Lemma \ref{lm:3cond}(1), $(X,\ast,\ld_{\ast})$ is $2$-reductive. Moreover, for $x,y\in X$ we have
\begin{align*}
L_{L_x\pi(y)}\pi L_x\stackrel{\scriptsize\eqref{eq:isoper1}}=
L_y\pi L_x\stackrel{\scriptsize\eqref{eq:isoper2}}=L_x\pi L_y.
\end{align*}
By Lemma \ref{lm:3cond}(3) the left quasigroup $(X,\ast,\ld_{\ast})$ is left distributive, and in consequence $2$-reductive rack.
\end{proof}

\begin{corollary}\label{cor:cor1}
Let $(X,\circ,\ld_{\circ})$ be a $2$-permutational medial left quasigroup and $e\in X$. Then the $L_e^{-1}$-isotope of $(X,\circ,\ld_{\circ})$ is a $2$-reductive rack.
\end{corollary}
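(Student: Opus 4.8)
The plan is to realize this corollary as a direct instance of Theorem~\ref{th:2perrozdz} applied to the bijection $\pi=L_e^{-1}$. Note first that $L_e^{-1}$ is indeed a bijection of $X$, since in any left quasigroup every left translation is a bijection. Thus all that remains is to check that this particular $\pi$ satisfies the two hypotheses of Theorem~\ref{th:2perrozdz}, namely Conditions~\eqref{eq:isoper1} and~\eqref{eq:isoper2}; once both are verified the conclusion is immediate.

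First I would verify Condition~\eqref{eq:isoper1}, that is $L_{L_x\pi(y)}=L_y$ for all $x,y\in X$. With $\pi=L_e^{-1}$ this reads $L_{L_xL_e^{-1}(y)}=L_y$, which is exactly the conclusion of Lemma~\ref{lm:a5} after an appropriate matching of variables: taking the instance of Lemma~\ref{lm:a5} in which its two translation indices are $x$ and $e$ and its single free argument is $y$ yields $L_{L_xL_e^{-1}(y)}=L_y$. This step uses only that $(X,\circ,\ld_{\circ})$ is $2$-permutational.

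Next I would verify Condition~\eqref{eq:isoper2}, namely $L_x\pi L_y=L_y\pi L_x$. With $\pi=L_e^{-1}$ this becomes $L_xL_e^{-1}L_y=L_yL_e^{-1}L_x$, which is precisely Lemma~\ref{lm:a3}(3) specialized to $z=e$. This step uses only that the left quasigroup is medial. With both hypotheses now in hand, Theorem~\ref{th:2perrozdz} applies to $\pi=L_e^{-1}$ and gives at once that the $L_e^{-1}$-isotope of $(X,\circ,\ld_{\circ})$ is a $2$-reductive rack.

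There is no genuine obstacle here: the corollary is a clean specialization in which the two auxiliary lemmas of this section supply exactly the two conditions demanded by Theorem~\ref{th:2perrozdz}. The only point requiring a little care is the bookkeeping of variable names when invoking Lemma~\ref{lm:a5}, so that its free index is identified with $y$ and its two translation indices with $x$ and $e$; everything else is a direct substitution.
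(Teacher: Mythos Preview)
Your proposal is correct and follows essentially the same approach as the paper: verify Conditions~\eqref{eq:isoper1} and~\eqref{eq:isoper2} for $\pi=L_e^{-1}$ via Lemma~\ref{lm:a5} and Lemma~\ref{lm:a3}(3), respectively, and then invoke Theorem~\ref{th:2perrozdz}. The paper's proof is just a compressed version of exactly this argument.
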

\begin{proof}
By Lemmas \ref{lm:a5} and \ref{lm:a3}, for each $x,y\in X$
\begin{align*}
&L_{L_xL_e^{-1}(y)}=L_y\quad {\rm and}\quad L_xL_e^{-1}L_y=L_yL_e^{-1}L_x,
\end{align*}
which shows that Conditions \eqref{eq:isoper1} and \eqref{eq:isoper2} are satisfied for $\pi=L_e^{-1}$. Corollary follows by Theorem \ref{th:2perrozdz}.
\end{proof}

\begin{example}\label{ex:exp1}
Let $(X,\circ,\ld_{\circ})$  be the $2$-permutational medial left quasigroup from
Example \ref{ex:nondistr} and let $e=0$. Then  $(X,\ast,\ld_{\ast})$, with $x\ast y=x\circ L_0^{-1}(y)$ and
$x\ld_{\ast}y=L_0(x\ld_{\circ} y)$,  is a $2$-reductive rack with the $\ast$-multiplication table presented in Example \ref{ex:nie2red}.
\end{example}
The next example shows that the assumption of mediality in Corollary \ref{cor:cor1} is not always needed.
\begin{example}
Let $(\{0,1,2\},\circ,\ld_{\circ})$ be a left quasigroup with the following left multiplication:
 \[
  \begin{array}{c|ccc}
   \circ & 0 & 1 & 2 \\
   \hline
   0 & 0 & 2&1 \\
   1 & 0 & 2&1 \\
   2 & 1&2 & 0
  \end{array},
  \]
  i.e. $L_0=L_1=(12)$ and $L_2=(012)$.
This left quasigroup is $2$-permutational, but not medial
\[
0=0\circ 0=(0\circ 0)\circ (1\circ 0)\neq (0\circ 1)\circ (0\circ 0)=2\circ 0=1.
\]
But for $\pi=L_0^{-1}=(12)$ Condition \eqref{eq:isoper2} is satisfied and the $\pi$-isotope of $(\{0,1,2\},\circ,\ld_{\circ})$
 \[
  \begin{array}{c|ccc}
   \ast & 0 & 1 & 2 \\
   \hline
   0 & 0 & 1&2 \\
   1 & 0 & 1&2 \\
   2 & 1&0 & 2
  \end{array}
  \]
is $2$-reductive rack $((\Z_2,\Z_1),\left(\begin{smallmatrix}0&0\\1&0\end{smallmatrix}\right))$.
\end{example}

It is worth emphasizing that all results from Sections \ref{sec2} -- \ref{sec4a} established for left quasigroups are also true for right quasigroups, when using their dual versions.

\section{Left distributive involutive biracks}\label{sec3}

In the previous three sections we prepared tools that
we shall be now using on biracks -- universal algebraic
incarnations of set-theoretic solutions of the Yang-Baxter equation.
Originally, biracks are algebras studied in low-dimensional topology \cite{FJSK, EN}. The equational definition of a birack we use here was given first in \cite{S06}. (Note that Stanovsk\'y considered two left quasigroups there.)

\begin{de}
An algebra $(X,\circ,\ld_{\circ},\bullet,/_{\bullet})$ with four binary operations is called a {\em birack}, if $(X,\circ,\ld_{\circ})$ is a left quasigroup,
$(X,\bullet,/_{\bullet})$ is a right quasigroup and the following holds for any $x,y,z\in X$:
\begin{align}
x\circ(y\circ z)=(x\circ y)\circ((x\bullet y)\circ z),\label{eq:b1}\\
(x\circ y)\bullet((x\bullet y)\circ z)=(x\bullet(y\circ z))\circ(y\bullet z), \label{eq:b2}\\
(x\bullet y)\bullet z= (x\bullet (y\circ z))\bullet (y \bullet z).\label{eq:b3}
  \end{align}
\end{de}

We will say that a birack $(X,\circ,\ld_{\circ},\bullet,/_{\bullet})$ is \emph{left distributive}, if $(X,\circ,\ld_{\circ})$ is a rack,  is \emph{right distributive}, if for every $x,y,z \in X$
\begin{align*}
(y\bullet z)\bullet x=(y\bullet x)\bullet(z\bullet x),
\end{align*}
i.e. the right quasigroup $(X,\bullet,/_{\bullet})$ is \emph{right distributive}. The birack is \emph{distributive} if it is left and right distributive. It is evident that all properties of left distributive biracks stay true in its dual form for right distributive ones.

\begin{example}
Let $X$ be a non-empty set and let $f,g\colon X\to X$ be two bijections with $fg=gf$. An algebra $(X,\circ,\ld_{\circ},\bullet,/_{\bullet})$ such that for every $x,y\in X$,
\begin{align*}
x\circ y=f(y),\quad x\ld_{\circ} y=f^{-1}(y),\\
x\bullet y=g(x),\quad x/_{\bullet} y=g^{-1}(x)
\end{align*}
is a birack called $1$-\emph{permutational} (since
both quasigroups are $1$-permutational). If $f,g= {\rm id}$, $1$-permutational birack is called a \emph{projection birack}.
\vskip 2mm

Each $1$-permutational birack is left distributive, since for every $x,y,z\in X$
\begin{align*}
x\circ(y\circ z)=L_xL_y(z)=f^2(z)=L_{x\circ y}L_x(z)=(x\circ y)\circ(x\circ z).
\end{align*}
\end{example}
A birack is {\em idempotent} if both one-sided quasigroups $(X,\circ,\ld_{\circ})$ and $(X,\bullet,/_{\bullet})$ are idempotent.
And a birack is {\em involutive} if it additionally satisfies, for every $x,y\in X$:
 \begin{align}
 &(x\circ y)\circ(x\bullet y)=x, \label{eq:linv}\\
&  (x\circ y)\bullet(x\bullet y)=y.\label{eq:rinv}
 \end{align}

Note that Conditions \eqref{eq:linv} and  \eqref{eq:rinv} give, for every $x,y\in X$,
\begin{align}\label{eq:inv}
x\bullet y=L_{x\circ y}^{-1}(x)=(x\circ y)\ld_{\circ} x\quad {\rm and}\quad x\circ y={\textbf{\emph R}}^{-1}_{x\bullet y}(y)=y/_{\bullet}(x\bullet y).
 \end{align}
It follows then, that an involutive birack is idempotent if $(X,\circ,\ld_{\circ})$ \emph{or} $(X,\bullet,/_{\bullet})$ is idempotent. We shall see (Corollary \ref{stu})
that an involutive birack is left distributive if and only if it is right distributive.

The next, well known result (see \cite[Proposition 1]{Rump}, \cite[Proposition 1.5]{D15}, \cite[Section 4.2]{JPZ}) is crucial for our considerations.
\begin{theorem}\label{th:rclq}
An algebra $(X,\circ,\ld_{\circ},\bullet,/_{\bullet})$ is an involutive birack if and only if $(X,\circ,\ld_{\circ})$ is a non-degenerate right cyclic left quasigroup.
\end{theorem}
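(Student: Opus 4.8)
The plan is to use that, by the involutive conditions, the operation $\bullet$ is completely determined by $\circ$. Indeed \eqref{eq:inv} shows that in any involutive birack $x\bullet y=(x\circ y)\ld_\circ x=L_{x\circ y}^{-1}(x)$ and that $/_\bullet$ is forced to be the right division of $\bullet$; since the left translations $L_x$ are bijections in every left quasigroup, the only piece of non-degeneracy at stake is the behaviour of $T\colon x\mapsto x\ld_\circ x$. So the theorem reduces to the claim that a left quasigroup $(X,\circ,\ld_\circ)$ carries a (necessarily unique) involutive birack structure exactly when it is non-degenerate and right cyclic, and I would prove both implications working only with the operators $L_x$ and with $T$.

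The heart of the matter is the equivalence of the birack identity \eqref{eq:b1} with the right cyclic law. Read as an identity of left translations, \eqref{eq:b1} says $L_xL_y=L_{x\circ y}L_{x\bullet y}$; inserting $x\bullet y=L_{x\circ y}^{-1}(x)$ gives $L_xL_y=L_{x\circ y}L_{L_{x\circ y}^{-1}(x)}$. Substituting $y\mapsto x\ld_\circ y$, so that $x\circ y$ collapses to $y$ and $x\bullet(x\ld_\circ y)=y\ld_\circ x$, turns this into $L_xL_{x\ld_\circ y}=L_yL_{y\ld_\circ x}$, which is precisely the form of \eqref{eq:RC}; conversely the substitution $y\mapsto x\circ y$ in \eqref{eq:RC} recovers \eqref{eq:b1}. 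This one computation disposes of \eqref{eq:b1} in both directions.

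For the implication from right cyclic to birack I would define $\bullet$ by the formula above and verify the remaining identities directly. The involutive laws are immediate from the left quasigroup axiom \eqref{eq:lq}: \eqref{eq:linv} is the instance $(x\circ y)\circ((x\circ y)\ld_\circ x)=x$, and \eqref{eq:rinv} follows from it by one further application of \eqref{eq:lq}, neither step using right cyclicity. The identities \eqref{eq:b2} and \eqref{eq:b3} I would treat in the same style, rewriting each as a relation among left translations and reducing it through \eqref{eq:RC} and \eqref{eq:inv}; the left--right symmetry of the involutive relations makes \eqref{eq:b3} essentially the mirror image of \eqref{eq:b1}.

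The step I expect to be the main obstacle is reconciling the two notions of non-degeneracy: on the birack side it is the bijectivity of every right translation $R_y\colon x\mapsto x\bullet y$, while the statement asks for bijectivity of $T$. To bridge them I would express $R_y$ through the left translations and $T$ and show, using right cyclicity, that bijectivity of all $R_y$ is equivalent to bijectivity of $T$. In the backward direction this equivalence supplies precisely the right division $/_\bullet$ that makes $(X,\bullet,/_\bullet)$ a right quasigroup, and in the forward direction it gives non-degeneracy of $(X,\circ,\ld_\circ)$. The nested inverse $L_{x\circ y}^{-1}$ sitting inside a left translation is where the bookkeeping is most error-prone, so I would stay at the level of translation operators as long as possible before evaluating on elements.
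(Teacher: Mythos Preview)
The paper does not prove this theorem; it quotes it as a well-known result with references to Rump, Dehornoy, and the authors' earlier paper, recalling only the formula $x\bullet y=(x\circ y)\ld_\circ x$ for the reconstruction of~$\bullet$ from~$\circ$. Your outline is essentially the standard argument found in those references, and it is correct in its main points: the equivalence of \eqref{eq:b1} with the right cyclic law via the substitution $y\mapsto x\ld_\circ y$ is exactly right, and the involutive identities \eqref{eq:linv}--\eqref{eq:rinv} indeed follow from \eqref{eq:lq} alone.

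For the step you flag as the main obstacle, the identity you are looking for is
\[
T\circ R_y = L_y^{-1}\circ T.
\]
It drops out of the right cyclic law by setting $z=y$ in $(x\ld_\circ y)\ld_\circ(x\ld_\circ z)=(y\ld_\circ x)\ld_\circ(y\ld_\circ z)$ to obtain $T(x\ld_\circ y)=(y\ld_\circ x)\ld_\circ T(y)$, and then substituting $x\mapsto x\circ y$. From it, $R_y=T^{-1}L_y^{-1}T$ whenever $T$ is bijective, which settles the backward direction. In the forward direction, however, this identity alone does \emph{not} force $T$ to be bijective from bijectivity of all~$R_y$; instead one checks directly that $S\colon x\mapsto x/_\bullet x$ is a two-sided inverse of~$T$, using only the involutive laws and the right-quasigroup structure (the paper records exactly this fact in the remark immediately following the theorem). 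One further point of order: your appeal to left--right duality for \eqref{eq:b3} is legitimate only \emph{after} the right-quasigroup structure on~$\bullet$ has been established, so the non-degeneracy step must precede that duality argument rather than follow it. Identity \eqref{eq:b2} is not literally a mirror image of anything and still requires its own short computation in the same operator style; it does go through, but do not expect symmetry alone to hand it to you.
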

Recall, if $(X,\circ,\ld_{\circ})$ is a non-degenerate right cyclic left quasigroup then defining for every $x,y\in X$, $x\bullet y={\textbf{\emph R}}_y(x)=(x\circ y)\ld_{\circ} x$, and $x/_{\bullet} y={\textbf{\emph R}}^{-1}_y(x)$,
the algebra $(X,\circ,\ld_{\circ},\bullet,/_{\bullet})$ is an involutive birack.
\vskip 2mm

\begin{remark}
Conditions \eqref{eq:b1} -- \eqref{eq:b3} and \eqref{eq:linv} -- \eqref{eq:rinv} are dual with respect to operations $\circ$ and $\bullet$. Thus Theorem \ref{th:rclq} immediately implies (see \cite{D15}, \cite{Rump} or \cite[Section 4.2]{JPZ}) that in an involutive birack $(X,\circ,\ld_{\circ},\bullet,/_{\bullet})$, the right quasigroup $(X,\bullet,/_{\bullet})$ is non-degenerate and left cyclic i.e. for every $x,y,z\in X$
\begin{align*}
(z/_{\bullet} x)/_{\bullet}(y/_{\bullet} x)=(z/_{\bullet} y)/_{\bullet}(x/_{\bullet} y),
\end{align*}
and the mapping
\begin{align*}
S\colon X\to X; \quad x\mapsto x/_{\bullet}x
\end{align*}
is a bijection.

Moreover (see \cite{Rump} and \cite[Section 2]{JPZ}), operations $\ld_{\circ}$ and $/_{\bullet}$ are connected by
\begin{equation*}
(x\ld_{\circ} x)/_{\bullet}(x\ld_{\circ} x)=x\quad {\rm and}\quad
(x/_{\bullet} x)\ld_{\circ}(x/_{\bullet} x)=x,
\end{equation*}
which is equivalent to the fact that the mappings $S$ and $T\colon X\to X; x\mapsto x\ld_{\circ}x$ are mutually inverse. It simply means that each involutive birack is a \emph{biquandle} (see \cite{S06}).
\end{remark}

An involutive birack $(X,\circ,\ld_{\circ},\bullet,/_{\bullet})$ is $2$-\emph{reductive} if the left quasigroup $(X,\circ,\ld_{\circ})$ is $2$-reductive. By Theorem \ref{th:rclq} and Corollary \ref{cor:2red} we directly obtain the following.
\begin{corollary}\label{cor:eqvdis}
An involutive birack is left distributive if and only if it is $2$-reductive.
\end{corollary}

From now on, we will use both terms: a \emph{(left) distributive} involutive birack and a \emph{$2$-reductive} involutive birack, interchangeably.
\vskip 2mm
In some cases in a birack $(X,\circ,\ld_{\circ},\bullet,/_{\bullet})$, the left multiplication $\circ$
and the right multiplication $\bullet$ are mutually inverse, i.e. for every $x,y\in X$, the following  condition is satisfied:
\begin{equation}\label{eq:lri}
(x\circ y)\bullet x=y = x\circ(y\bullet x)\quad \Leftrightarrow \quad L_x=\emph{\textbf{R}}_x^{-1}.
\end{equation}
Condition \eqref{eq:lri} is called {\bf lri} (see \cite[Definition 2.18]{GIM08}).

For example, Condition {\bf lri} is satisfied in idempotent involutive biracks \cite[Corollary 2.33]{GIM08}.
Moreover, Gateva-Ivanova showed that also $2$-reductive involutive biracks satisfy this condition. Below we present a shorter alternative proof of this fact.
\begin{lemma}\label{lri2red}\cite[Lemma 7.1]{GI18}
An involutive $2$-reductive birack satisfies Condition {\bf lri}.
\end{lemma}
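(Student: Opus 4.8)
We must show that in an involutive $2$-reductive birack $(X,\circ,\ld_{\circ},\bullet,/_{\bullet})$, Condition \textbf{lri} holds, i.e. $L_x=\textbf{\emph R}_x^{-1}$ for every $x\in X$, equivalently $(x\circ y)\bullet x=y$ for all $x,y$.

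**Plan.** The natural approach is to compute $x\bullet y$ directly and show it equals $L_x^{-1}(y)$; by \eqref{eq:lri} this is exactly what \textbf{lri} requires. The birack structure is entirely determined by the left quasigroup via the formula \eqref{eq:inv}, namely $x\bullet y=(x\circ y)\ld_{\circ} x=L_{x\circ y}^{-1}(x)$, so everything reduces to an identity about left translations in the underlying $2$-reductive rack. First I would invoke Theorem \ref{th:rclq} to guarantee $(X,\circ,\ld_{\circ})$ is a non-degenerate right cyclic left quasigroup, and Corollary \ref{cor:eqvdis} (together with Corollary \ref{cor:2red} or Lemma \ref{lm:2red}) to know it is in fact a $2$-reductive rack, so that the full strength of Lemma \ref{lm:2red} applies: $\LMlt(X)$ is abelian and $L_{x\circ y}=L_y$ for all $x,y$.

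**Main computation.** Starting from $x\bullet y=L_{x\circ y}^{-1}(x)$ I would use $2$-reductivity \eqref{eq:red} in the form $L_{x\circ y}=L_y$ to get $x\bullet y=L_y^{-1}(x)$. It then remains to verify that $L_y^{-1}(x)=L_x^{-1}(y)$, i.e.\ $L_xL_y^{-1}(x)=y$, or equivalently $x=L_yL_x^{-1}(y)$; this is a symmetric-looking relation that should follow from commutativity of $\LMlt(X)$ combined with $2$-reductivity. Concretely, set $w=L_x^{-1}(y)$; applying $L_w$ and using that $L_{x\circ w}=L_w$ together with the abelian-group structure on the orbits from Theorem \ref{th:tri} ought to collapse the expression. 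The cleanest route is probably to exploit the affine-mesh description: writing $x\in A_i$, $y\in A_j$ one has $L_x^{-1}(y)=y-c_{i,j}$ and $L_y^{-1}(x)=x-c_{j,i}$, and then the $2$-reductive-rack identities translate \textbf{lri} into a purely combinatorial statement about the constants $c_{i,j}$.

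**Main obstacle.** The delicate point is verifying the symmetry $L_x^{-1}(y)=L_y^{-1}(x)$, since $x$ and $y$ need not lie in the same orbit and the constants $c_{i,j}$ and $c_{j,i}$ live in different groups $A_j$ and $A_i$. I expect one must use the \emph{involutivity} hypothesis more seriously than just \eqref{eq:inv}: specifically, Condition \eqref{eq:linv} in the form $(x\circ y)\circ(x\bullet y)=x$ should provide the cross-orbit constraint that forces the two expressions to agree. So the real work is to feed the candidate value $x\bullet y=L_y^{-1}(x)$ back into \eqref{eq:linv} (or directly verify $(x\circ y)\bullet x=y$) and check consistency using $L_{x\circ y}=L_y$ and commutativity; once the bookkeeping of which orbit each element lands in is handled, the identity should fall out in one or two lines.
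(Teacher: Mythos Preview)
Your computation in the ``Main computation'' paragraph is already a complete proof, and it is essentially the paper's argument: from \eqref{eq:inv} you get $x\bullet y=L_{x\circ y}^{-1}(x)$, and $2$-reductivity \eqref{eq:red} gives $L_{x\circ y}=L_y$, hence $x\bullet y=L_y^{-1}(x)$. But this \emph{is} Condition \textbf{lri}: since $\textbf{\emph R}_y(x)=x\bullet y$, you have shown $\textbf{\emph R}_y=L_y^{-1}$ for every $y$, which is exactly $L_y=\textbf{\emph R}_y^{-1}$. You are done at that line.

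The confusion arises in your ``Plan'', where you write that you must show $x\bullet y=L_x^{-1}(y)$. That is the wrong target: \textbf{lri} says $a\bullet x=L_x^{-1}(a)$ (equivalently $x\bullet y=L_y^{-1}(x)$), not $x\bullet y=L_x^{-1}(y)$. Because of this slip you then set yourself the spurious task of proving $L_y^{-1}(x)=L_x^{-1}(y)$, which is \emph{false} in general for $2$-reductive racks. For instance, in Example~\ref{ex:nie2red} one has $L_0=\mathrm{id}$ and $L_1=(02)(13)$, so $L_0^{-1}(1)=1\neq 2=L_1^{-1}(0)$. All of the ``Main obstacle'' paragraph, the cross-orbit analysis of the constants $c_{i,j}$ versus $c_{j,i}$, and the appeal to Theorem~\ref{th:tri} are therefore unnecessary: there is no obstacle, only a misread of which subscript carries the translation in \textbf{lri}.

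The paper's proof does exactly your one-line computation (in both directions $(x\circ y)\bullet x=y$ and $x\circ(y\bullet x)=y$), using only \eqref{eq:inv}, \eqref{eq:red}, and \eqref{eq:lq}; no commutativity of $\LMlt(X)$, no affine meshes, no reference to \eqref{eq:linv} beyond what is packaged in \eqref{eq:inv}.
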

\begin{proof}
Let $(X,\circ,\ld_{\circ},\bullet,/_{\bullet})$ be an involutive $2$-reductive birack. Then, for each $x,y\in X$ we obtain
\[
x\circ(y\bullet x)\stackrel{\scriptsize \eqref{eq:inv}}=x\circ L^{-1}_{y\circ x}(y)\stackrel{\scriptsize \eqref{eq:red}}=x\circ L^{-1}_x(y)=x\circ(x\ld_{\circ} y)\stackrel{\scriptsize \eqref{eq:lq}}=y
\]
and
\[
(x\circ y)\bullet x\stackrel{\scriptsize \eqref{eq:inv}}=L^{-1}_{(x\circ y)\circ x}(x\circ y)\stackrel{\scriptsize \eqref{eq:red}}=L_x^{-1}L_x(y)=y. \qedhere
\]
\end{proof}

The converse statement to Lemma \ref{lri2red} is not true even for the idempotent case.

\begin{example}\label{ex:lri2red}
Let $(X=\{0,1,2,3,4\},\circ,\ld_{\circ},\bullet,/_{\bullet})$ be the following idempotent involutive birack: $L_0=L_3={\textbf{\emph R}}_0={\textbf{\emph R}}_3=(24)$, $L_1={\textbf{\emph R}}_1=(02)(34)$, $L_2=L_4={\textbf{\emph R}}_2={\textbf{\emph R}}_4=(03)$. The birack satisfies Condition {\bf lri}, but it is not $2$-reductive, since $L_4=L_{L_1(3)}\neq L_3$.
\end{example}

As a result of Lemma \ref{lri2red}, one obtains that an involutive left (or right) distributive birack is distributive.

\begin{corollary}\label{stu}

An involutive birack $(X,\circ,\ld_{\circ},\bullet,/_{\bullet})$ is left distributive if and only if it is right distributive.
\end{corollary}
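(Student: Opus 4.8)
The plan is to leverage the recently established symmetry between left distributivity and right distributivity of involutive biracks, exploiting the fact that all the structural results from Section~\ref{sec2} carry over to right quasigroups via their dual versions. First I would recall the asymmetry setup: by definition, the birack $(X,\circ,\ld_{\circ},\bullet,/_{\bullet})$ is left distributive precisely when $(X,\circ,\ld_{\circ})$ is a rack, and right distributive precisely when $(X,\bullet,/_{\bullet})$ is a right-distributive right quasigroup. Both of these, via Corollary~\ref{cor:eqvdis} and its dual, translate into $2$-reductivity conditions on the respective one-sided quasigroups.

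The key reduction is to express right distributivity of $(X,\bullet,/_{\bullet})$ entirely in terms of the left operation $\circ$. For this I would use the connecting identity~\eqref{eq:inv}, which tells us that $x\bullet y=L_{x\circ y}^{-1}(x)=(x\circ y)\ld_{\circ} x$, so $\textbf{\emph R}_y=L_{(\_)\circ y}^{-1}$ applied in the appropriate slot. This allows every statement about $\bullet$ to be rewritten through $\circ$ and its translations. In particular, I would like to show that left distributivity (equivalently, by Corollary~\ref{cor:eqvdis}, $2$-reductivity of $\circ$, equivalently $\LMlt(X)$ abelian by Lemma~\ref{lm:2red}) forces the right-distributive law on $\bullet$, and conversely.

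The cleanest route, and the one I expect to work, is to combine Lemma~\ref{lri2red} with the duality already built into the framework. Assume the birack is left distributive, hence $2$-reductive by Corollary~\ref{cor:eqvdis}, hence it satisfies Condition~\textbf{lri} by Lemma~\ref{lri2red}, giving $L_x=\textbf{\emph R}_x^{-1}$ for all $x$. Under \textbf{lri}, the right translations $\textbf{\emph R}_x$ are simply the inverses of the left translations, so the right-distributive law $(y\bullet z)\bullet x=(y\bullet x)\bullet(z\bullet x)$ becomes a statement purely about the commuting family $\{L_x^{-1}\}$, which is abelian exactly when $\{L_x\}$ is. Thus right distributivity reduces to commutativity of $\LMlt(X)$, and Lemma~\ref{lm:2red}, item~(2), identifies this with left distributivity. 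For the converse, I would run the dual argument: right distributivity makes $(X,\bullet,/_{\bullet})$ a $2$-reductive right quasigroup, the dual of Lemma~\ref{lri2red} again yields \textbf{lri}, and the dual of Lemma~\ref{lm:2red} returns left distributivity.

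The main obstacle I anticipate is bookkeeping rather than conceptual difficulty: making sure the duality between $\circ$ and $\bullet$ is applied correctly, since the two operations are genuinely asymmetric in a generic birack and only coincide (via inverses) once \textbf{lri} is in force. The subtle point is that Lemma~\ref{lri2red} presupposes $2$-reductivity, so in the converse direction I cannot invoke it directly for $\circ$; I must instead invoke its dual for $\bullet$, using that $(X,\bullet,/_{\bullet})$ is non-degenerate and left cyclic as guaranteed by the Remark following Theorem~\ref{th:rclq}. Once that dual version of \textbf{lri} is secured, the equivalence closes symmetrically, and the whole argument is short.
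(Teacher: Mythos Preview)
Your proposal is correct and follows essentially the same route as the paper: use Corollary~\ref{cor:eqvdis} and Lemma~\ref{lri2red} to obtain \textbf{lri}, rewrite $\bullet$ in terms of $\circ$, and then dualize for the converse. The only cosmetic difference is that the paper passes through Lemma~\ref{rem:eqival} (left distributivity of $(X,\ld_{\circ},\circ)$) to deduce right distributivity of $(X,\bullet,/_{\bullet})$, whereas you phrase the same step via commutativity of $\LMlt(X)$; note that under \textbf{lri} the right-distributive law reads $L_zL_x=L_xL_{x\ld_\circ z}$, so your reduction to commutativity is silently using $2$-reductivity ($L_{x\ld_\circ z}=L_z$) once more, which you have available but should mention explicitly.
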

\begin{proof}
By Corollary \ref{cor:eqvdis}, an involutive left distributive birack is $2$-reductive and by Lemma \ref{lri2red} it satisfies Condition {\bf lri}. Hence, for every $x,y\in X$, we have $x\bullet y=y\ld_{\circ} x$. By Lemma \ref{rem:eqival} $(X,\ld_{\circ},\circ)$ is left distributive, and straightforward calculations show that $(X,\bullet,/_{\bullet})$ is right distributive.

The proof in the opposite direction follows by the fact that a right distributive right quasigroup satisfies dual $2$-reductive law, and in consequence it also satisfies Condition {\bf lri}.
\end{proof}

\vskip 2mm
Moreover, if $(X,\circ,\ld_{\circ},\bullet,/_{\bullet})$ is an involutive distributive birack then
the left quasigroup $(X,\circ,\ld_{\circ})$ and the right quasigroup $(X,\bullet,/_{\bullet})$
are \emph{mutually orthogonal}, i.e. for every $a,b\in X$,
the pair of equations
\begin{align*}
a=x\circ y\;\; {\rm and}\;\; b=x\bullet y
\end{align*}
has a unique solution: $x=a\circ b$ and $y=b\ld_{\circ} a$. Indeed, by Corollary \ref{cor:eqvdis}

the left quasigroup $(X,\circ,\ld_{\circ})$ is $2$-reductive. Therefore, we have
\begin{align*}
&x\circ y=(a\circ b)\circ (b\ld_{\circ} a)=L_{a\circ b}L_b^{-1}(a)\stackrel{\scriptsize \eqref{eq:red}}=L_bL_b^{-1}(a)=a.
\end{align*}
Further, by Lemma \ref{lri2red},
$x\bullet y=y\ld_{\circ} x$ and $x/_{\bullet}y=y\circ x$. Hence,
\begin{align*}
&x\bullet y=(a\circ b)\bullet (b\ld_{\circ} a)=(b\ld_{\circ} a)\ld_{\circ}(a\circ b)=L^{-1}_{b\ld_{\circ} a}L_a(b)\stackrel{\scriptsize \eqref{eq:red}}=L_a^{-1}L_a(b)=b.
\end{align*}

Since by Corollary \ref{cor:eqvdis}, for an involutive distributive birack $(X,\circ,\ld_{\circ},\bullet,/_{\bullet})$, the left quasigroup $(X,\circ,\ld_{\circ})$ is $2$-reductive,
Theorem \ref{th:tri} immediately implies
\begin{theorem}\label{th5.8}
Each involutive distributive birack $(X,\circ,\ld_{\circ},\bullet,/_{\bullet})$ is a disjoint union, over a set $I$, of abelian groups
$A_j=\left<\{a_{i,j}\mid i\in I\}\right>$, for every $j\in I$, with operations:
\begin{align*}
&x\circ y=y+a_{i,j}\quad {\rm and}\quad x\ld_{\circ} y=y-a_{i,j},\\
&x\bullet y=y\ld_{\circ} x=x-a_{j,i} \quad {\rm and}\quad x/_{\bullet} y=x+a_{j,i},
\end{align*}
for $x\in A_i$ and $y\in A_j$.
\end{theorem}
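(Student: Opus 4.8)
The plan is to deduce everything from the structure theorem for $2$-reductive racks (Theorem~\ref{th:tri}) together with the {\bf lri} property of Lemma~\ref{lri2red}, so that the theorem becomes essentially a bookkeeping exercise.

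First I would reduce to the left quasigroup. Since the birack is left distributive, Corollary~\ref{cor:eqvdis} tells us that $(X,\circ,\ld_{\circ})$ is $2$-reductive, and being left distributive it is a rack; thus $(X,\circ,\ld_{\circ})$ is a $2$-reductive rack. Theorem~\ref{th:tri} then applies directly: $(X,\circ,\ld_{\circ})$ is the sum of a trivial affine mesh $((A_i)_{i\in I},(a_{i,j})_{i,j\in I})$, where $I$ indexes the orbits of $\LMlt(X)$, each orbit $A_j$ is an abelian group with $A_j=\left<\{a_{i,j}\mid i\in I\}\right>$, and for $x\in A_i$, $y\in A_j$ one has $x\circ y=y+a_{i,j}$ and $x\ld_{\circ} y=y-a_{i,j}$. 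This already yields the disjoint-union description together with the stated formulas for $\circ$ and $\ld_{\circ}$.

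Next I would recover the right operations. By Corollary~\ref{cor:eqvdis} the birack is $2$-reductive, so Lemma~\ref{lri2red} provides Condition {\bf lri}, which (as noted in the proof of Corollary~\ref{stu}) is equivalent to $x\bullet y=y\ld_{\circ} x$ for all $x,y\in X$. Taking $x\in A_i$ and $y\in A_j$, the element $y\ld_{\circ} x$ is computed inside the mesh with \emph{left} argument $y\in A_j$ and \emph{right} argument $x\in A_i$, hence $x\bullet y=y\ld_{\circ} x=x-a_{j,i}$; note the swapped index order $j,i$. For the right division I would use that $/_{\bullet}$ inverts right multiplication: for fixed $y\in A_j$, right multiplication by $y$ restricts on each orbit $A_i$ to the translation $x\mapsto x-a_{j,i}$ of the abelian group $A_i$ (with $a_{j,i}\in A_i$), which is a bijection with inverse $x\mapsto x+a_{j,i}$; therefore $x/_{\bullet}y=x+a_{j,i}$. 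Equivalently one may invoke the orthogonality relation $x/_{\bullet}y=y\circ x$ recorded just before the theorem, which gives the same value $x+a_{j,i}$.

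There is no genuine obstacle here, since the heavy lifting is already done by Theorems~\ref{th:rclq} and~\ref{th:tri} and by Lemma~\ref{lri2red}; the phrase ``immediately implies'' is justified. The only point that requires care is the bookkeeping of constants, namely verifying that the constant attached to $\bullet$ and $/_{\bullet}$ is $a_{j,i}$ with indices reversed relative to $\circ$ — a discrepancy that is forced by the fact that in $y\ld_{\circ} x$ the left factor lies in $A_j$ and the right factor in $A_i$.
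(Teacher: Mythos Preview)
Your proposal is correct and follows exactly the route the paper intends: the paper states the theorem as an immediate consequence of Corollary~\ref{cor:eqvdis} and Theorem~\ref{th:tri}, with the formulas for $\bullet$ and $/_{\bullet}$ coming from the {\bf lri} identity $x\bullet y=y\ld_{\circ} x$ (and $x/_{\bullet} y=y\circ x$) already recorded via Lemma~\ref{lri2red} just before the statement. Your write-up simply spells out these implicit steps, including the index-swap bookkeeping, and matches the paper's argument.
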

Taking the notion from $2$-reductive racks, we will shortly say that the birack $(X,\circ,\ld_{\circ},\bullet,/_{\bullet})$ is the sum of a trivial affine mesh $\mathcal A=((A_i)_{i\in I},\,(a_{i,j})_{i,j\in I})$ over a set $I$. Note that each orbit is a $1$-permutational birack.
\vskip 2mm

Recall Condition $(\ast)$ discussed in Section \ref{sec2}.
Involutive distributive biracks without fixed points are examples of biracks which do not satisfy the condition.
The representation of involutive distributive birack as the sum of a trivial affine mesh allows one to verify quickly Condition $(\ast)$.
\begin{rem}\label{rem:star}
Let $(X,\circ,\ld_{\circ},\bullet,/_{\bullet})$ be an involutive distributive birack. By Theorem \ref{th5.8}, the birack $(X,\circ,\ld_{\circ},\bullet,/_{\bullet})$ is the sum of a trivial affine mesh $((A_i)_{i\in I},\,(c_{i,j})_{i,j\in I})$ over a set $I$. Then $(X,\circ,\ld_{\circ},\bullet,/_{\bullet})$ satisfies Condition $(\ast)$ if and only if
\[
\forall i\in I\quad\forall x\in A_i\quad\exists j\in I\quad \exists a\in A_j\quad a\circ x=x+c_{j,i}=x
\quad \Leftrightarrow\quad
\forall i\in I\quad\exists j\in I\quad c_{j,i}=0.
\]
\end{rem}
Remark \ref{rem:star} says that an involutive distributive birack satisfies Condition $(\ast)$ if and only if in each column in the matrix of constants there is at least one $0$.
\begin{example}
Let a birack $(X,\circ,\ld_{\circ},\bullet,/_{\bullet})$ be the sum of the trivial affine mesh $((\Z_4,\Z_4),\left(\begin{smallmatrix}1&2\\2&1\end{smallmatrix}\right))$. Then $(X,\circ,\ld_{\circ},\bullet,/_{\bullet})$ is distributive but does not satisfy Condition $(\ast)$. This birack is also not $1$-permutational.
\end{example}

\vskip 2mm

Let $(X,\circ,\ld_{\circ},\bullet,/_{\bullet})$ be a birack. Etingof, Schedler and Soloviev defined in \cite{ESS} the relation
\begin{align}\label{rel:sim}
a\sim b \quad \Leftrightarrow\quad L_a=L_b \quad \Leftrightarrow\quad \forall x\in X\quad a\circ x=b\circ x.
\end{align}

By their results, the relation $\sim$ is a congruence of involutive biracks, i.e. an equivalence relation on the set $X$ preserving all four operations in a birack $(X,\circ,\ld_{\circ},\bullet,/_{\bullet})$.

\begin{example}
Let $(X,\circ,\ld_{\circ},\bullet,/_{\bullet})$ be an involutive distributive birack. By Theorem \ref{th5.8}, the birack $(X,\circ,\ld_{\circ},\bullet,/_{\bullet})$ is the sum of a trivial affine mesh $((A_i)_{i\in I},\,(c_{i,j})_{i,j\in I})$ over a set $I$. For $a\in A_i$ and $b\in A_j$
\[
a\sim b \quad \Leftrightarrow\quad \forall k\in I\quad\forall x\in A_k\quad x+c_{i,k}=a\circ x=b\circ x=x+c_{j,k}
\quad \Leftrightarrow\quad \forall k\in I\quad c_{i,k}=c_{j,k}.
\]
\end{example}

\begin{lemma}\label{lm:sf}
Let $(X,\circ,\ld_{\circ},\bullet,/_{\bullet})$ be an involutive distributive birack. Then the quotient birack $(X/\mathord{\sim},\circ,\ld_{\circ},\bullet,/_{\bullet})$ is a projection one. 
\end{lemma}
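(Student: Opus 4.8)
The plan is to show that in the quotient every left translation and every right translation becomes the identity, which is exactly the defining property of a projection birack. Since $\sim$ is a congruence of involutive biracks, the quotient $(X/\mathord{\sim},\circ,\ld_{\circ},\bullet,/_{\bullet})$ is again an involutive birack and its operations are computed representative-wise, e.g.\ $[a]\circ[x]=[a\circ x]$. So it suffices to check that the class of $a\circ x$ coincides with the class of $x$, and likewise for the remaining three operations.

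First I would collapse the left quasigroup part. By Corollary \ref{cor:eqvdis} the birack is $2$-reductive, so by \eqref{eq:red} we have $L_{a\circ x}=L_x$ for all $a,x\in X$. By the very definition \eqref{rel:sim} of $\sim$ this says precisely $a\circ x\sim x$, hence $[a]\circ[x]=[a\circ x]=[x]$ in the quotient; that is, $L_{[a]}=\id$ for every class $[a]$. Consequently the left division collapses as well, since in the quotient left quasigroup $[a]\ld_{\circ}[x]=L_{[a]}^{-1}([x])=[x]$ by \eqref{eq:lq}. Thus $(X/\mathord{\sim},\circ,\ld_{\circ})$ is a projection left quasigroup.

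It then remains to handle $\bullet$ and $/_{\bullet}$. Here I would invoke the structural identity \eqref{eq:inv}, which expresses $x\bullet y=(x\circ y)\ld_{\circ} x$ in any involutive birack. Passing to the quotient and using the two equalities just obtained, $[x]\bullet[y]=([x]\circ[y])\ld_{\circ}[x]=[y]\ld_{\circ}[x]=[x]$, so every right translation is the identity, and likewise $[x]/_{\bullet}[y]=[x]$ by \eqref{eq:rq}. (Alternatively one can use Lemma \ref{lri2red}: a $2$-reductive involutive birack satisfies lri, so $x\bullet y=y\ld_{\circ} x$, and $y\ld_{\circ} x\sim x$ then follows from $2$-reductivity of $(X,\ld_{\circ},\circ)$ via Lemma \ref{rem:eqival}.) All four operations being projections, the quotient is a projection birack.

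I do not anticipate a genuine obstacle: the key observation is that the congruence $\sim$ is, by \eqref{rel:sim}, exactly the relation forced by the $2$-reductive law $L_{a\circ x}=L_x$, so the left part collapses immediately, and the involutive birack axiom \eqref{eq:inv} then propagates this to the right operations. The only point requiring a little care is the bookkeeping that the two \emph{divisions} also become projections, which is formal once the two multiplications are known to be projections, using \eqref{eq:lq} and \eqref{eq:rq}.
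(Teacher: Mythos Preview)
Your proof is correct and follows essentially the same route as the paper: use Corollary~\ref{cor:eqvdis} to obtain $2$-reductivity, whence $a\circ x\sim x$ collapses the left part, and then transport this to~$\bullet$. The only cosmetic difference is that the paper collapses~$\bullet$ by combining the congruence property with {\bf lri} (your alternative argument), whereas your primary argument passes to the quotient and uses \eqref{eq:inv} there; these are equivalent.
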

\begin{proof}
By Corollary \ref{cor:eqvdis},
$(X,\circ,\ld_{\circ},\bullet,/_{\bullet})$ is $2$-reductive. In consequence, $x\circ y\sim y$. Since the relation $\sim$ is a congruence of $(X,\circ,\ld_{\circ},\bullet,/_{\bullet})$, $x\sim x$ and $(x\circ y)\bullet x\sim y\bullet x$. By Lemma \ref{lri2red},
$(x\circ y)\bullet x=y$ which gives $y\sim y\bullet x$.
\end{proof}

\section{2-permutational involutive biracks}\label{sec4}
Lemma \ref{lm:sf} shows that, for each involutive distributive birack, its quotient by the relation \eqref{rel:sim}
is a projection birack. There are also not distributive involutive biracks such that the quotient is a $1$-permutational birack.
\begin{example}
 Let $(X=\{0,1,2,3\},\circ,\ld_{\circ},\bullet,/_{\bullet})$ be the following involutive birack:
 \[
  \begin{array}{c|cccc}
   \circ & 0 & 1 & 2 & 3\\
   \hline
   0 & 1 & 0 & 3 &2\\
   1 & 3 & 2 & 1 & 0 \\
   2 & 1 & 0 & 3 & 2\\
   3 & 3 & 2 & 1 &0
  \end{array}
  \qquad
  \begin{array}{c|cccc}
  \bullet & 0 & 1 & 2 & 3\\
   \hline
   0 & 3 & 1 & 3 &1\\
   1 & 2 & 0 & 2 & 0 \\
   2 & 1 & 3 & 1 & 3\\
   3 & 0 & 2 & 0 &2
  \end{array},
 \]
 i.e. $L_0=L_2={\textbf{\emph R}}_1={\textbf{\emph R}}_3=(01)(23)$ and $L_1=L_3={\textbf{\emph R}}_0={\textbf{\emph R}}_2=(03)(12)$.
Example \ref{ex:nondistr} shows that the birack  $(X,\circ,\ld_{\circ},\bullet,/_{\bullet})$ is not left distributive, but the left quasigroup
$(X,\circ,\ld_{\circ})$ is $2$-permutational. Clearly, the quotient $(X/\mathord{\sim},\circ,\ld_{\circ},\bullet,/_{\bullet})$
\[
  \begin{array}{c|cc}
   \circ & 0/\mathord{\sim} & 1/\mathord{\sim}\\
   \hline
   0/\mathord{\sim} & 1/\mathord{\sim} & 0/\mathord{\sim} \\
   1/\mathord{\sim} & 1/\mathord{\sim} & 0/\mathord{\sim}
  \end{array}
  \qquad
  \begin{array}{c|cc}
  \bullet & 0/\mathord{\sim} & 1/\mathord{\sim}\\
   \hline
   0/\mathord{\sim} & 1/\mathord{\sim} & 1/\mathord{\sim} \\
   1/\mathord{\sim} & 0/\mathord{\sim} & 0/\mathord{\sim}
  \end{array}.
\]
is a $1$-permutational, but not a projection birack.
\end{example}

\begin{de}
An involutive birack $(X,\circ,\ld_{\circ},\bullet,/_{\bullet})$ is $2$-\emph{permutational} (\emph{medial}) if the left quasigroup $(X,\circ,\ld_{\circ})$ is $2$-permutational ({medial}).
\end{de}

\begin{proposition}\label{lm:birmed}
An involutive birack is $2$-permutational if and only if it is medial.
\end{proposition}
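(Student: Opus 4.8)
The plan is to prove Proposition~\ref{lm:birmed} by reducing both implications to statements about the underlying left quasigroup $(X,\circ,\ld_{\circ})$, since by Theorem~\ref{th:rclq} every involutive birack comes from a non-degenerate right cyclic left quasigroup, and mediality and $2$-permutationality of the birack are defined purely through $(X,\circ,\ld_{\circ})$. Thus the statement to establish is: for a non-degenerate right cyclic left quasigroup, being $2$-permutational is equivalent to being medial. The right cyclic hypothesis is what we get for free from the birack structure, and it is the extra ingredient that makes the equivalence hold (recall Examples~\ref{ex:non2per} and~\ref{ex:noncyc} show the two sides of the isotopy behave asymmetrically, so right cyclicity cannot simply be dropped).

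The easier direction is medial $\Rightarrow$ $2$-permutational: here I would invoke Lemma~\ref{lm:medial-2perm} directly, which states that a right cyclic medial left quasigroup is $2$-permutational. So if the birack is medial, then $(X,\circ,\ld_{\circ})$ is right cyclic (by Theorem~\ref{th:rclq}) and medial, hence $2$-permutational, hence the birack is $2$-permutational by definition. For the converse, $2$-permutational $\Rightarrow$ medial, I would use the fact that $(X,\circ,\ld_{\circ})$ is right cyclic together with $2$-permutationality to derive the medial identity~\eqref{eq:med}, i.e. $L_{x\circ y}L_z=L_{x\circ z}L_y$ for all $x,y,z$. The natural route is to start from the right cyclic law in the form $L_xL_{x\ld_{\circ} y}=L_yL_{y\ld_{\circ} x}$ from~\eqref{eq:RC} and manipulate translations, feeding in the $2$-permutational identity $L_{z\circ x}=L_{t\circ x}$ from~\eqref{eq:2per} to collapse terms.

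Concretely, for the converse I expect to reason at the level of left translations. Writing $L_{x\circ y}L_z$, I would try to use right cyclicity to rewrite the first factor and then exploit that, by~\eqref{eq:2per}, $L_{w\circ z}$ does not depend on $w$, in order to symmetrize the expression in $y$ and $z$. One clean approach is to first show that right cyclicity plus $2$-permutationality forces $\LMlt(X)$ to have enough commutativity among the relevant products: from~\eqref{eq:RC} we have $L_xL_{x\ld_{\circ} y}=L_yL_{y\ld_{\circ} x}$, and substituting via~\eqref{eq:lq} and~\eqref{eq:2per} should let me replace $L_{x\ld_{\circ} y}$-type factors by $L$-values that are independent of the ``outer'' argument, after which the medial identity~\eqref{eq:med} falls out by comparing $L_{x\circ y}L_z$ with $L_{x\circ z}L_y$.

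The main obstacle I anticipate is the converse direction: manipulating the right cyclic identity into precisely the medial form requires choosing the right substitutions for the free variables and tracking which translation products the $2$-permutational law is allowed to equate. In particular, $2$-permutationality~\eqref{eq:2per} only asserts independence of the leftmost factor \emph{after} one multiplication on the right (i.e. $L_{z\circ x}=L_{t\circ x}$), not unconditional commutativity, so I must be careful to apply it only to expressions already in the form $L_{(\cdot)\circ x}$. I expect the cleanest writeup to mirror the translation-algebra computations already used in Lemmas~\ref{lm:a2} and~\ref{lm:medial-2perm}, threading~\eqref{eq:lq}, \eqref{eq:RC}, and~\eqref{eq:2per} in sequence; the risk is an algebra slip in the chain of $L$-identities rather than any conceptual gap.
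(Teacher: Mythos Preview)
Your plan is essentially the same as the paper's. For medial $\Rightarrow$ $2$-permutational both you and the paper simply invoke Lemma~\ref{lm:medial-2perm}. For the converse, the paper also works at the level of left translations, and the key intermediate fact is precisely the one you isolate: that $L_{x\ld_\circ y}$ depends only on~$y$. Once this is in hand, the paper substitutes into the right cyclic law~\eqref{eq:RC} (with $x\mapsto b\circ x$, $y\mapsto a\circ y$) and reads off~\eqref{eq:med}, exactly as you propose.

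The one point your sketch leaves genuinely open is \emph{how} to pass from ``$L_{x\circ y}$ is independent of~$x$'' (which is~\eqref{eq:2per}) to ``$L_{x\ld_\circ y}$ is independent of~$x$''. These are not formally the same, and~\eqref{eq:2per} together with~\eqref{eq:lq} alone does not give it. The paper bridges this by invoking that $\sim$ is a congruence of the involutive birack (a fact from~\cite{ESS}): from $y=x\circ(x\ld_\circ y)\sim z\circ(x\ld_\circ y)$ one then gets $z\ld_\circ y\sim z\ld_\circ\bigl(z\circ(x\ld_\circ y)\bigr)=x\ld_\circ y$. If you want to stay purely inside the left quasigroup as your framing suggests, you can recover exactly the compatibility you need from~\eqref{eq:RC} alone: if $a\sim b$ then $a\ld_\circ z=b\ld_\circ z$, and~\eqref{eq:RC} gives $L_zL_{z\ld_\circ a}=L_aL_{a\ld_\circ z}=L_bL_{b\ld_\circ z}=L_zL_{z\ld_\circ b}$, hence $z\ld_\circ a\sim z\ld_\circ b$. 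Either way, this is the missing link you should make explicit; after that your chain of substitutions goes through exactly as in the paper.
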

\begin{proof}
Let $(X,\circ,\ld_{\circ},\bullet,/_{\bullet})$ be an involutive $2$-permutational birack.
Since the relation \eqref{rel:sim} is a congruence of an involutive birack then by \eqref{eq:lq} and \eqref{eq:2per} for every $x,y,z\in X$ we have:
\[
z\ld_{\circ} y=z\ld_{\circ} (x\circ (x\ld_{\circ} y))\sim z\ld_{\circ} (z\circ (x\ld_{\circ} y))=x\ld_{\circ} y,
\]
which implies
\[L_{x\ld_{\circ} y}=L_{z\ld_{\circ} y}.\] By Theorem \ref{th:rclq}, the left quasigroup $(X,\circ,\ld_{\circ})$ is right cyclic. Hence for $x,y,a,b\in X$ we obtain
\[
L_xL_{a\ld_{\circ} y}=L_xL_{x\ld_{\circ} y}\stackrel{\scriptsize \eqref{eq:RC}}=L_yL_{y\ld_{\circ} x}=L_yL_{b\ld_{\circ} x}.
\]
Substitution of $x$ by $b\circ x$ and $y$ by $a\circ y$ gives that the birack $(X,\circ,\ld_{\circ},\bullet,/_{\bullet})$ is medial
\[
L_{b\circ x}L_y=L_{a\circ y}L_x\stackrel{\scriptsize \eqref{eq:2per}}=L_{b\circ y}L_x.
\]
Lemma \ref{lm:medial-2perm} completes the proof.
\end{proof}

Rump showed in \cite[Theorem 2]{Rump} that each finite right cyclic left quasigroup is non-degenerate (see also \cite[Proposition 4.7]{JPZ}). Therefore, directly by Theorem \ref{th:rclq}  and Proposition \ref{lm:birmed}, we obtain
\begin{corollary}\label{cor:med}
Each finite $2$-permutational right cyclic left quasigroup is medial.
\end{corollary}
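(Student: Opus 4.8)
The plan is to reduce the statement to the two results just established, Theorem~\ref{th:rclq} and Proposition~\ref{lm:birmed}, by supplying the only missing hypothesis---non-degeneracy---from finiteness. Let $(X,\circ,\ld_{\circ})$ be a finite $2$-permutational right cyclic left quasigroup; the goal is to show it is medial, so it suffices to exhibit $X$ as the left quasigroup reduct of an involutive $2$-permutational birack and then cite Proposition~\ref{lm:birmed}.

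First I would invoke Rump's theorem that every finite right cyclic left quasigroup is non-degenerate, so that the map $T\colon x\mapsto x\ld_{\circ} x$ of \eqref{eq:ND} is a bijection. This is the sole point at which finiteness is used, and it is exactly the input required to apply Theorem~\ref{th:rclq}: since $(X,\circ,\ld_{\circ})$ is now a non-degenerate right cyclic left quasigroup, that theorem lets me endow $X$ with the derived operations $x\bullet y=(x\circ y)\ld_{\circ} x$ and $x/_{\bullet} y={\textbf{\emph R}}_y^{-1}(x)$, producing an involutive birack $(X,\circ,\ld_{\circ},\bullet,/_{\bullet})$ whose left quasigroup reduct is precisely the algebra we started with.

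Finally I would observe that, by the very definition of a $2$-permutational involutive birack, this birack is $2$-permutational because its reduct $(X,\circ,\ld_{\circ})$ is; hence Proposition~\ref{lm:birmed} applies and the birack is medial, which by definition means that $(X,\circ,\ld_{\circ})$ is medial---the desired conclusion. The only genuine content here is the appeal to finiteness through Rump's non-degeneracy result, since that is what unlocks Theorem~\ref{th:rclq}; everything else is a matter of tracing the definitions of the birack operations back and forth. I therefore expect no real obstacle beyond correctly citing the non-degeneracy of finite right cyclic left quasigroups and checking that the $2$-permutational property transfers verbatim between the left quasigroup and its associated birack.
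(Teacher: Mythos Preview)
Your proposal is correct and follows exactly the paper's approach: invoke Rump's result that finite right cyclic left quasigroups are non-degenerate, then apply Theorem~\ref{th:rclq} to obtain an involutive birack, and conclude mediality via Proposition~\ref{lm:birmed}. The paper states precisely this chain of implications in the sentence preceding the corollary.
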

But the following question is still open.
\begin{ques}
Is it true that every infinite $2$-permutational right cyclic left quasigroup is medial?
\end{ques}

\begin{corollary}\label{cor:ldidemp}
An involutive $2$-permutational birack $(X,\circ,\ld_{\circ},\bullet,/_{\bullet})$ is distributive if and only if the quotient $(X/\mathord{\sim},\circ,\ld_{\circ},\bullet,/_{\bullet})$ is idempotent.
\end{corollary}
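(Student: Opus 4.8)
The plan is to reduce both implications to the single algebraic condition $L_{x\circ x}=L_x$ for all $x\in X$, which I claim is equivalent to idempotency of the quotient $(X/\mathord{\sim},\circ,\ld_{\circ},\bullet,/_{\bullet})$. Indeed, since $\sim$ is a congruence of the involutive birack, one has $\overline{x}\circ\overline{x}=\overline{x\circ x}$ in the quotient, so $\circ$ is idempotent on $X/\mathord{\sim}$ precisely when $x\circ x\sim x$ for every $x$, and by the definition \eqref{rel:sim} of $\sim$ this reads $L_{x\circ x}=L_x$. Because $X/\mathord{\sim}$ is again an involutive birack, idempotency of its operation $\circ$ forces idempotency of $\bullet$ as well, so this condition genuinely captures idempotency of the quotient as a birack.

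For the forward implication, I would assume $(X,\circ,\ld_{\circ},\bullet,/_{\bullet})$ is distributive. By Corollary~\ref{cor:eqvdis} it is then $2$-reductive, so $L_{x\circ y}=L_y$ for all $x,y\in X$ by \eqref{eq:red}. Specialising to $y=x$ gives $L_{x\circ x}=L_x$, which by the reformulation above means that $X/\mathord{\sim}$ is idempotent. (Alternatively one can invoke Lemma~\ref{lm:sf} directly, since a projection birack is idempotent.)

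For the converse, I would assume $X/\mathord{\sim}$ is idempotent, so $L_{x\circ x}=L_x$ for all $x$. The key observation is that $2$-permutationality makes $L_{z\circ x}$ independent of the first factor $z$: by \eqref{eq:2per} we have $L_{z\circ x}=L_{t\circ x}$ for all $z,t,x$, and specialising to $t=x$ yields $L_{z\circ x}=L_{x\circ x}$. Combining this with $L_{x\circ x}=L_x$ gives $L_{z\circ x}=L_x$ for all $z,x\in X$, which is exactly the $2$-reductivity law \eqref{eq:red}. Corollary~\ref{cor:eqvdis} then shows that $X$ is distributive.

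The whole argument is short precisely because of this diagonalisation step: $2$-permutationality already says that the left translation of a product $z\circ x$ depends only on $x$, so the diagonal identities $L_{x\circ x}=L_x$ propagate to all products. The only point that requires genuine care is the first paragraph, where one must verify that idempotency of the quotient birack is faithfully encoded by $L_{x\circ x}=L_x$, using both that $\sim$ is a congruence and that in an involutive birack idempotency of $\circ$ is equivalent to idempotency of $\bullet$. Notably, mediality (Proposition~\ref{lm:birmed}) is not even needed here.
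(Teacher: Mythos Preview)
Your proof is correct and in fact more elementary than the paper's argument. For the converse implication, the paper first invokes Proposition~\ref{lm:birmed} to establish mediality, then uses Lemma~\ref{lm:a3}(1) to obtain $L_{e\circ x}=L_{e\circ e}L_xL_e^{-1}$, and combines this with $L_{e\circ e}=L_e$ to conclude $L_{e\circ x}=L_eL_xL_e^{-1}$, which is precisely the left distributivity condition~\eqref{eq:ab}. You bypass mediality entirely: the diagonalisation step $L_{z\circ x}=L_{x\circ x}$ (pure $2$-permutationality, setting $t=x$ in~\eqref{eq:2per}) together with $L_{x\circ x}=L_x$ yields $2$-reductivity directly, and Corollary~\ref{cor:eqvdis} closes the loop. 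Your route is shorter and, as you observe, does not need Proposition~\ref{lm:birmed} at all; the paper's route lands on left distributivity itself rather than on $2$-reductivity, but since Corollary~\ref{cor:eqvdis} makes these equivalent for involutive biracks, this distinction is immaterial. The forward direction is handled the same way in both arguments (via Lemma~\ref{lm:sf} or your direct specialisation of~\eqref{eq:red}).
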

\begin{proof}
By Proposition \ref{lm:birmed} the birack  $(X,\circ,\ld_{\circ},\bullet,/_{\bullet})$ is medial.
Let $(X/\mathord{\sim},\circ,\ld_{\circ},\bullet,/_{\bullet})$ be idempotent. This implies that for each $x\in X$, $x\sim x\circ x$.
Therefore, by Lemma \ref{lm:a3}, for every $e,x\in X$,
\[
L_{e\circ x}=L_{e\circ e}L_xL_e^{-1}=L_{e}L_xL_e^{-1}.
\]
Lemma \ref{lm:sf} completes the proof.
\end{proof}

Using Condition {\bf lri} it is easy to recognize distributive biracks among $2$-permutational involutive ones.
\begin{lemma}\label{c:lri2red}
Let $(X,\circ,\ld_{\circ},\bullet,/_{\bullet})$ be a $2$-permutational involutive birack.
Then $(X,\circ,\ld_{\circ},\bullet,/_{\bullet})$ is distributive if and only if it satisfies Condition {\bf lri}.
\end{lemma}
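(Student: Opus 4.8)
The plan is to dispatch the forward implication immediately from earlier results and to devote the real work to the converse.

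For the forward direction, if the birack is distributive then by Corollary~\ref{cor:eqvdis} it is $2$-reductive, and Lemma~\ref{lri2red} gives that it satisfies Condition {\bf lri}; so nothing new is required here.

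For the converse I would first rewrite Condition {\bf lri} as a statement purely about the left quasigroup $(X,\circ,\ld_{\circ})$. Since $L_x=\mathbf{R}_x^{-1}$ is equivalent to $\mathbf{R}_x=L_x^{-1}$, and $\mathbf{R}_x(y)=y\bullet x$ while $L_x^{-1}(y)=x\ld_{\circ} y$, Condition {\bf lri} reads $y\bullet x=x\ld_{\circ} y$ for all $x,y$. On the other hand, the involutive birack identity \eqref{eq:inv} expresses $y\bullet x=(y\circ x)\ld_{\circ} y=L_{y\circ x}^{-1}(y)$. Combining the two gives $(y\circ x)\ld_{\circ} y=x\ld_{\circ} y$, and applying $L_{y\circ x}$ to both sides yields the key identity $y=(y\circ x)\circ(x\ld_{\circ} y)$, valid for all $x,y\in X$.

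The heart of the argument is then to feed this identity into $2$-permutationality. Writing $w:=x\ld_{\circ} y$, so that $y=x\circ w$, the identity becomes $x\circ w=((x\circ w)\circ x)\circ w$. By $2$-permutationality \eqref{eq:2per} the first argument is irrelevant after two left multiplications, so $L_{(x\circ w)\circ x}=L_{x\circ x}$ and hence $((x\circ w)\circ x)\circ w=(x\circ x)\circ w$. This forces $x\circ w=(x\circ x)\circ w$ for all $x,w$, i.e. $L_x=L_{x\circ x}$, equivalently $x\circ x\sim x$ in the sense of \eqref{rel:sim}. Consequently the quotient $(X/\mathord\sim,\circ,\ld_{\circ},\bullet,/_{\bullet})$ is idempotent, and Corollary~\ref{cor:ldidemp} (which applies because a $2$-permutational involutive birack is medial by Proposition~\ref{lm:birmed}) yields that the birack is distributive.

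I expect the only genuinely delicate step to be the translation of Condition {\bf lri} into the single left-quasigroup identity $y=(y\circ x)\circ(x\ld_{\circ} y)$; once this is in hand, the appeal to $2$-permutationality is essentially a one-line substitution and the remainder is bookkeeping with earlier corollaries. A point worth double-checking is that Corollary~\ref{cor:ldidemp} is indeed applicable here, i.e. that its hypotheses ($2$-permutational, involutive) match exactly the present setting, so that the equivalence ``distributive if and only if quotient idempotent'' may legitimately be invoked.
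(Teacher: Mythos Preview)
Your proof is correct, but it takes a somewhat more circuitous route than the paper's. The paper establishes $2$-reductivity directly in a single chain: using {\bf lri} to write $z=(y\circ z)\bullet y$, then $2$-permutationality to replace $x\circ y$ by $(y\circ z)\circ y$ in the first argument, and finally the involutive identity \eqref{eq:linv} to collapse $((y\circ z)\circ y)\circ((y\circ z)\bullet y)$ to $y\circ z$. This yields $(x\circ y)\circ z=y\circ z$ immediately, without appealing to mediality or to Corollary~\ref{cor:ldidemp}.

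Your argument instead extracts from {\bf lri} and \eqref{eq:inv} the identity $y=(y\circ x)\circ(x\ld_\circ y)$, deduces $L_x=L_{x\circ x}$, and then invokes Corollary~\ref{cor:ldidemp}. This is valid, but note that once you have $L_x=L_{x\circ x}$, you are one step away from $2$-reductivity anyway: by $2$-permutationality $L_{x\circ y}=L_{y\circ y}=L_y$, so the detour through the idempotent quotient and Corollary~\ref{cor:ldidemp} (and hence through Proposition~\ref{lm:birmed}) is unnecessary. The paper's version is cleaner because it never leaves the birack identities; yours is correct but leans on heavier machinery than the statement requires.
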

\begin{proof}
Let  $(X,\circ,\ld_{\circ},\bullet,/_{\bullet})$ be $2$-permutational and let it satisfy~{\bf lri}. Then
\[ (x\circ y)\circ z  \stackrel{\scriptsize {\bf lri}}= (x\circ y) \circ ( (y\circ z) \bullet y) \stackrel{\scriptsize \eqref{eq:2per}}= ( (y\circ z) \circ y ) \circ ( (y\circ z ) \bullet y ) \stackrel{\scriptsize \eqref{eq:linv}}= y\circ z. \]  %\qedhere
The converse follows by Lemma \ref{lri2red}.
\end{proof}

The condition of $2$-permutationality in Lemma \ref{c:lri2red} cannot be weakened, even in the idempotent case, as we see on the next example (see also Example \ref{ex:lri2red}).
\begin{example}
Let $(X=\{0,1,2,3\},\circ,\ld_{\circ},\bullet,/_{\bullet})$ be the following involutive birack: $L_0={\textbf{\emph R}}_0=(02)$, $L_1=L_3={\textbf{\emph R}}_1={\textbf{\emph R}}_3=\id$, $L_2={\textbf{\emph R}}_2=(02)(13)$. Clearly, the birack satisfies Condition {\bf lri}, but it is not $2$-permutational, since $L_2=L_{L_1(2)}\neq L_{L_0(2)}=L_0$.
\end{example}
%Lemmas \ref{lri2red} and \ref{lri2red1} gives an easy criterion how to recognize distributive biracks among $2$-permutational involutive ones.

In Section~4 we presented the notion of a $\pi$-isotope.
This construction allows us to tie distributive and 2-permutational
biracks.

Let $(X,\circ,\ld_{\circ},\bullet,/_{\bullet})$ be an involutive birack. By Theorem \ref{th:rclq}, $(X,\circ,\ld_{\circ})$ is a right-cyclic, non-degenerate left quasigroup. Let $\pi$ be a bijection of a set $X$ such that the $\pi$-isotope $(X,\ast,\ld_{\ast})$ of $(X,\circ,\ld_{\circ})$ is right cyclic. Then, by Remark \ref{rem:nd} and Theorem \ref{th:rclq}, one can define uniquely the involutive birack $(X,\ast,\ld_{\ast},\diamond,/_{\diamond})$. We will call the birack obtained in this way the \emph{$\pi$-isotope} of $(X,\circ,\ld_{\circ},\bullet,/_{\bullet})$. Note that then
\begin{align*}
&x\ast y:=L_x\pi (y)\quad {\rm and} ,\\
&x\diamond y=(x\ast y)\ld_{\ast}x=\pi^{-1}L_{L_x\pi(y)}^{-1}(x)=\pi^{-1}(x\bullet\pi(y)).
\end{align*}

\begin{remark}\label{th:distrzper1}
Let $(X,\circ,\ld_{\circ},\bullet,/_{\bullet})$  be an involutive birack and let $\pi$ be a bijection on the set $X$ which satisfies Conditions \eqref{eq:isoper1} and \eqref{eq:isoper2}. By Theorem \ref{th:2perrozdz}, the $\pi$-isotope $(X,\ast,\ld_{\ast})$ of $(X,\circ,\ld_{\circ})$ is a $2$-reductive rack and by Lemma \ref{lm:2red} it is right cyclic. The $\pi$-isotope $(X,\ast,\ld_{\ast},\diamond,/_{\diamond})$ of $(X,\circ,\ld_{\circ},\bullet,/_{\bullet})$ is a distributive involutive birack. Moreover, by Lemma \ref{lri2red}, $(X,\ast,\ld_{\ast},\diamond,/_{\diamond})$ satisfies Condition {\bf lri}, i.e. $x\diamond y=y\ld_{\ast} x=\pi^{-1}L_y^{-1}(x)$ and $x/_{\diamond}y=y\ast x=L_y\pi(x)$. This can be also obtained by direct calculations with use of Condition \eqref{eq:isoper1}.
\end{remark}

Let $(X,\ast,\ld_{\ast},\diamond,/_{\diamond})$ be the $\pi$-isotope of a finite involutive birack $(X,\circ,\ld_{\circ},\bullet,/_{\bullet})$ and let $(X,\ast,\ld_{\ast},\diamond,/_{\diamond})$ satisfy Condition {\bf lri}. Then the multiplication table of $\ast$ is obtained by a permuting columns of the multiplication
table of $\circ$ and
the multiplication table of $\diamond$ is obtained by a permuting rows of the multiplication table of $\bullet$.
\begin{remark}\label{th:distrzper2}
Let $(X,\circ,\ld_{\circ},\bullet,/_{\bullet})$ be an involutive distributive birack and let $\pi$ be a bijection on the set $X$ which satisfies Condition \eqref{eq:sigma}. By Corollary \ref{cor:eqvdis} and Theorem \ref{th:rozdz2per}, the $\pi$-isotope $(X,\ast,\ld_{\ast})$ of $(X,\circ,\ld_{\circ})$ is a $2$-permutational right cyclic left quasigroup. Then the $\pi$-isotope $(X,\ast,\ld_{\ast},\diamond,/_{\diamond})$ of $(X,\circ,\ld_{\circ},\bullet,/_{\bullet})$ is a $2$-permutational involutive birack.
\end{remark}

Note that by Lemmas \ref{lm:a5} and \ref{lm:a3} for a 2-permutational involutive birack $(X,\circ,\ld_{\circ},\bullet,/_{\bullet})$ it is always possible to construct its \emph{non-trivial} $\pi$-isotope, taking $\pi=L_{e}^{-1}\neq\id$, for any $e\in X$.
\begin{lemma}\label{lm:2p_iso}
Let $(X,\circ,\ld_{\circ},\bullet,/_{\bullet})$ be a $2$-permutational involutive birack and let $e\in X$. The $L_{e}^{-1}$-isotope of $(X,\circ,\ld_{\circ},\bullet,/_{\bullet})$ is an involutive distributive birack.
\end{lemma}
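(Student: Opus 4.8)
The plan is to push everything down to the level of the underlying left quasigroup and then invoke the isotopy machinery already developed. Since the birack $(X,\circ,\ld_{\circ},\bullet,/_{\bullet})$ is $2$-permutational, by definition $(X,\circ,\ld_{\circ})$ is a $2$-permutational left quasigroup; moreover, by Proposition~\ref{lm:birmed} the birack is medial, so $(X,\circ,\ld_{\circ})$ is in fact a $2$-permutational \emph{medial} left quasigroup. This is precisely the hypothesis of Corollary~\ref{cor:cor1}, and the paragraph preceding the statement already records (via Lemmas~\ref{lm:a5} and~\ref{lm:a3}) that the choice $\pi=L_e^{-1}$ meets Conditions~\eqref{eq:isoper1} and~\eqref{eq:isoper2} needed there.

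First I would apply Corollary~\ref{cor:cor1} with $\pi=L_e^{-1}$ to conclude that the $L_e^{-1}$-isotope $(X,\ast,\ld_{\ast})$ of $(X,\circ,\ld_{\circ})$ is a $2$-reductive rack. Being a rack it is left distributive, so by Lemma~\ref{lm:2red} (equivalence of conditions~(1) and~(3)) it is right cyclic. This right cyclicity is exactly what is required, via Remark~\ref{rem:nd} and Theorem~\ref{th:rclq}, for the $L_e^{-1}$-isotope $(X,\ast,\ld_{\ast},\diamond,/_{\diamond})$ of the birack to be well defined as an involutive birack; the remaining operations $\diamond,/_{\diamond}$ are then uniquely determined by $(X,\ast,\ld_{\ast})$.

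It then remains only to identify the distributivity. Since $(X,\ast,\ld_{\ast})$ is $2$-reductive, Corollary~\ref{cor:eqvdis} immediately gives that the involutive birack $(X,\ast,\ld_{\ast},\diamond,/_{\diamond})$ is left distributive, and Corollary~\ref{stu} upgrades this to full (left and right) distributivity. There is no genuine obstacle here: the whole argument is a bookkeeping composition of earlier results. The only point demanding care is confirming that passing from the birack to its left-quasigroup reduct loses no information, i.e.\ that $2$-permutationality of the birack is faithfully captured by $(X,\circ,\ld_{\circ})$ being $2$-permutational and medial, which is guaranteed by the definition together with Proposition~\ref{lm:birmed}.
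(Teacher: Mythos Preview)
Your proposal is correct and follows essentially the same route as the paper's own proof: invoke Proposition~\ref{lm:birmed} to get mediality, then Corollary~\ref{cor:cor1} to make the $L_e^{-1}$-isotope of the left quasigroup a $2$-reductive rack, and conclude that the resulting birack isotope is involutive and distributive. You have simply made explicit the intermediate steps (right cyclicity via Lemma~\ref{lm:2red}, well-definedness of the birack isotope via Remark~\ref{rem:nd} and Theorem~\ref{th:rclq}, and the upgrade to two-sided distributivity via Corollary~\ref{stu}) that the paper's terse proof leaves implicit.
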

\begin{proof}
By Proposition \ref{lm:birmed}, the birack $(X,\circ,\ld_{\circ},\bullet,/_{\bullet})$ is medial. By Corollary \ref{cor:cor1} the $L_e^{-1}$-isotope $(X,\ast,\ld_{\ast})$ of $(X,\circ,\ld_{\circ})$ is a 2-reductive rack. Then the $L_e^{-1}$-isotope $(X,\ast,\ld_{\ast},\diamond,/_{\diamond})$ of $(X,\circ,\ld_{\circ},\bullet,/_{\bullet})$ is a distributive involutive birack.
\end{proof}

Theorem below shows that each $2$-permutational involutive birack originates from an involutive distributive birack.

\begin{theorem}\label{th:repres}
Each $2$-permutational involutive birack is a $\pi$-isotope of a distributive one, for some bijection $\pi$.
\end{theorem}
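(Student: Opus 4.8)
The plan is to exhibit the desired distributive birack concretely as the $L_e^{-1}$-isotope furnished by Lemma~\ref{lm:2p_iso} and then to recover the original birack as \emph{its} $L_e$-isotope, exploiting the fact that successive isotopes compose transparently. In outline: passing to the $L_e^{-1}$-isotope permutes the columns of the multiplication table by $L_e^{-1}$, and passing back through the $L_e$-isotope permutes them by $L_e$, returning to the starting table.

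First I would fix an arbitrary $e\in X$ and set $\pi=L_e^{-1}$. By Lemma~\ref{lm:2p_iso}, the $L_e^{-1}$-isotope $(X,\ast,\ld_{\ast},\diamond,/_{\diamond})$ of the given $2$-permutational involutive birack $(X,\circ,\ld_{\circ},\bullet,/_{\bullet})$ is a distributive involutive birack; in particular, by Lemma~\ref{lm:2red}, the left quasigroup $(X,\ast,\ld_{\ast})$ is right cyclic. Unwinding Definition~\ref{rem:iso}, its left translations $L_x^{\ast}$ satisfy
\[
L_x^{\ast}=L_x\pi=L_xL_e^{-1},\qquad x\in X,
\]
so that $x\ast y=L_xL_e^{-1}(y)$ and $(L_x^{\ast})^{-1}=L_eL_x^{-1}$.

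Next I would compute the $L_e$-isotope of $(X,\ast,\ld_{\ast})$, denoting the resulting operations by $\star$ and $\ld_{\star}$. By Definition~\ref{rem:iso},
\[
x\star y=x\ast L_e(y)=L_x^{\ast}L_e(y)=L_xL_e^{-1}L_e(y)=L_x(y)=x\circ y,
\]
and likewise
\[
x\ld_{\star}y=L_e^{-1}(L_x^{\ast})^{-1}(y)=L_e^{-1}L_eL_x^{-1}(y)=L_x^{-1}(y)=x\ld_{\circ}y.
\]
Thus the $L_e$-isotope of the left quasigroup $(X,\ast,\ld_{\ast})$ is exactly $(X,\circ,\ld_{\circ})$, which is the expected cancellation of the two column permutations $L_e^{-1}$ and $L_e$.

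Finally, since $(X,\circ,\ld_{\circ})$ arises from an involutive birack, it is right cyclic and non-degenerate by Theorem~\ref{th:rclq}; hence the $L_e$-isotope of the \emph{birack} $(X,\ast,\ld_{\ast},\diamond,/_{\diamond})$ is well defined. Its underlying left quasigroup is $(X,\circ,\ld_{\circ})$ by the previous step, and because the operations $\bullet,/_{\bullet}$ of an involutive birack are uniquely determined by its left quasigroup via Theorem~\ref{th:rclq}, this $L_e$-isotope coincides with the original $(X,\circ,\ld_{\circ},\bullet,/_{\bullet})$. Therefore the given birack is the $\pi$-isotope of the distributive birack $(X,\ast,\ld_{\ast},\diamond,/_{\diamond})$ with $\pi=L_e$, as required. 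The computations are routine; the only point demanding care is the appeal to the \emph{uniqueness} in Theorem~\ref{th:rclq}, which both guarantees that the $L_e$-isotope of the birack (not merely of the left quasigroup) is defined and forces it to agree with the birack we started from.
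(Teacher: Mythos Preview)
Your proof is correct and follows essentially the same approach as the paper: take the $L_e^{-1}$-isotope via Lemma~\ref{lm:2p_iso} to obtain a distributive birack, then recover the original as its $L_e$-isotope. The only difference is that the paper additionally verifies that $\varrho=L_e$ satisfies Condition~\eqref{eq:sigma} on $(X,\ast,\ld_{\ast})$ (using Lemma~\ref{lm:a3}(3)) before forming the $L_e$-isotope, whereas you bypass this check by observing directly that the $L_e$-isotope of $(X,\ast,\ld_{\ast})$ is $(X,\circ,\ld_{\circ})$, which is already known to be right cyclic; your appeal to the uniqueness in Theorem~\ref{th:rclq} then finishes the argument cleanly.
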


\begin{proof}

Let $(X,\circ,\ld_{\circ},\bullet,/_{\bullet})$ be a $2$-permutational involutive birack and let $e\in X$. By Lemma \ref{lm:2p_iso} the $L_e^{-1}$-isotope $(X,\ast,\ld_{\ast},\diamond,/_{\diamond})$ of $(X,\circ,\ld_{\circ},\bullet,/_{\bullet})$ is a distributive involutive birack.

Let $\varrho=L_e$. By Lemma \ref{lm:a3}(3) we have for each $x,y,z\in X$
\begin{align*}
&\varrho(y)\ast \varrho(x\ast z)=L_{L_e(y)}L_e^{-1}L_eL_xL_e^{-1}(z)=L_{L_e(y)}L_xL_e^{-1}(z)=L_{L_e(e)}L_yL_e^{-1}L_xL_e^{-1}(z)=\\
&L_{L_e(e)}L_xL_e^{-1}L_yL_e^{-1}(z)=L_{L_e(x)}L_yL_e^{-1}(z)
=L_{L_e(x)}L_e^{-1}L_eL_yL_e^{-1}(z)=\varrho(x)\ast\varrho(y\ast z),
\end{align*}
which shows that the left quasigroup $(X,\ast,\ld_{\ast})$ satisfies Condition \eqref{eq:sigma}, for $\varrho=L_e$.

Moreover, for each $x,y\in X$
\begin{align*}
&x\ast \varrho(y)=L_xL_e^{-1}L_e(y)= x\circ y \quad {\rm and} \quad \varrho^{-1}(x\ld_{\ast}y)=L_e^{-1}L_eL_x^{-1}(y)=x\ld_{\circ}y,
\end{align*}
which shows that $(X,\circ,\ld_{\circ},\bullet,/_{\bullet})$ is the $L_e$-isotope of the involutive distributive birack $(X,\ast,\ld_{\ast},\diamond,/_{\diamond})$.
\end{proof}
Now we collect some useful facts about bijections satisfying Conditions \eqref{eq:isoper1} and \eqref{eq:sigma}.
\begin{remark}\label{rm.rho}
Let $(X,\circ,\ld_{\circ},\bullet,/_{\bullet})$ be an involutive birack and let $\varrho$ be a bijection on the set $X$ which satisfies Condition \eqref{eq:sigma}. Then, for every $x,y\in X$,
\begin{align*}
x\sim y\quad \Leftrightarrow \quad \varrho(x)\sim\varrho(y).
\end{align*}
Indeed, by Definition \ref{rel:sim} we have
\begin{align*}
x\sim y\ \Leftrightarrow\ L_x=L_y\ \Rightarrow\ L_{\varrho(y)}\varrho L_x\stackrel{\scriptsize \eqref{eq:sigma}}=L_{\varrho(x)}\varrho L_y=
L_{\varrho(x)}\varrho L_x \ \Rightarrow\ L_{\varrho(x)}=L_{\varrho(y)}\ \Leftrightarrow\ \varrho(x)\sim\varrho(y).
\end{align*}
On the other hand,
\begin{align*}
\varrho(x)\sim\varrho(y) \ \Leftrightarrow\ L_{\varrho(x)}=L_{\varrho(y)}\ \Rightarrow\
L_{\varrho(y)}\varrho L_x \stackrel{\scriptsize \eqref{eq:sigma}}=L_{\varrho(x)}\varrho L_y=L_{\varrho(y)}\varrho L_y\ \Rightarrow\
L_x=L_y  \ \Leftrightarrow\ x\sim y.
\end{align*}

\end{remark}

\begin{remark}\label{rem:2red2red}
Let $(X,\circ,\ld_{\circ},\bullet,/_{\bullet})$ be an involutive birack which satisfies Condition {\bf lri}. Consider the $\pi$-isotope $(X,\ast,\ld_{\ast},\diamond,/_{\diamond})$ of $(X,\circ,\ld_{\circ},\bullet,/_{\bullet})$ for some bijection $\pi$ of the set $X$. Then $(X,\ast,\ld_{\ast},\diamond,/_{\diamond})$ satisfies Condition {\bf lri} if and only if for every $x\in X$
\begin{align*}
L_{\pi(x)}=L_x.\label{eq:pi2red}
\end{align*}
Indeed, for every $x,y\in X$
\begin{align*}
x\diamond y=y\ld_{\ast} x \quad \Leftrightarrow\quad \pi^{-1}(x\bullet \pi(y))=\pi^{-1}(x\bullet y)\quad \Leftrightarrow\quad {\textbf{\emph R}}_{\pi(y)}={\textbf{\emph R}}_y\quad \Leftrightarrow\quad L_{\pi(y)}^{-1}=L_y^{-1}.
\end{align*}
If $\pi$ happens to be an automorphism of the left quasigroup $(X,\circ,\ld_{\circ})$, then
\begin{align*}
L_x=L_{\pi(x)}=\pi L_x\pi^{-1}\quad \Leftrightarrow\quad \pi L_x=L_x\pi.
\end{align*}
Hence, in this case the $\pi$-isotope of $(X,\circ,\ld_{\circ},\bullet,/_{\bullet})$ satisfies then Condition {\bf lri} if and only if the automorphism $\pi$ commutes with each left translation.
In particular, the $\pi$-isotope of an involutive 2-reductive birack is 2-reductive for any choice $\pi=L_e$ or $\pi=L_e^{-1}$, with $e\in X$.
\end{remark}

\begin{example}
Let $(\{0,1,2,3\},\circ,\ld_{\circ},\bullet,/_{\bullet})$  be the $2$-permutational involutive birack with its left quasigroup $(\{0,1,2,3\},\circ,\ld_{\circ})$ from
Example \ref{ex:nondistr}. Then the
$L_0^{-1}$-isotope $(\{0,1,2,3\},\ast,\ld_{\ast},\diamond,/_{\diamond})$, with $x\ast y=x\circ L_0^{-1}(y)$,
$x\ld_{\ast}y=0\circ L_x^{-1}(y)$, $x\diamond y=0\circ L_y^{-1}(x)$ and $x/_{\diamond}y:=y\circ L_0^{-1}(x)$ is an involutive distributive birack with the $\ast$-table presented in Example \ref{ex:nie2red}.
\end{example}

\begin{example}\label{ex:new}
Let $(\{0,1,2,3\},\circ,\ld_{\circ},\bullet,/_{\bullet})$ be the distributive involutive birack with the left quasigroup $(\{0,1,2,3\},\circ,\ld_{\circ})$ defined in Example~\ref{ex:nie2red}. Note that the permutation $\pi=(01)(23)$ satisfies Condition \eqref{eq:sigma}. Then constructing the $\pi$-isotope of $(\{0,1,2,3\},\circ,\ld_{\circ},\bullet,/_{\bullet})$ we obtain the 2-permutational involutive birack $(\{0,1,2,3\},\ast,\ld_{\ast},\diamond,/_{\diamond})$ with the $\ast$-table presented in Example~\ref{ex:nondistr}. The $\diamond$-table is the following
\[
  \begin{array}{c|cccc}
   \diamond & 0 & 1 & 2 & 3\\
   \hline
   0 & 3 & 1 & 3 &1\\
   1 & 2 & 0 & 2 & 0 \\
   2 & 1 & 3 & 1 & 3\\
   3 & 0 & 2 & 0 &2
  \end{array}.
 \]
Since $L_0=L_2={\textbf{\emph R}}_1={\textbf{\emph R}}_3=(01)(23)$ and $L_1=L_3={\textbf{\emph R}}_0={\textbf{\emph R}}_2=(03)(12)$, the birack $(\{0,1,2,3\},\ast,\ld_{\ast},\diamond,/_{\diamond})$ does not satisfy Condition {\bf lri}.
\end{example}

\vskip 2mm

Note that different choices of a bijection in the construction of the isotope may give non isomorphic biracks.
\begin{example}\label{ex:6.10}
Let $(\{0,1,2,3,4\},\circ,\ld_{\circ},\bullet,/_{\bullet})$ be the $2$-permutational involutive birack with multiplication $\circ$
 \[
  \begin{array}{c|ccccc}
   \circ & 0 & 1 & 2 & 3 &4\\
   \hline
   0 & 0& 2 & 1 & 4 &3\\
   1 & 3 & 2 & 1 & 0 &4\\
   2 & 4 & 2 & 1 & 3&0\\
   3 & 0& 2 & 1 & 4 &3\\
   4&0& 2 & 1 & 4 &3
  \end{array},
 \]
 i.e. $L_0=L_3=L_4=(12)(34)$, $L_1=(03)(12)$ and $L_2=(04)(12)$. Then
$L_i^{-1}$-isotopes, for $i\in\{0,1\}$, of $(\{0,1,2,3,4\},\circ,\ld_{\circ},\bullet,/_{\bullet})$ have the following multiplication tables of~$\ast_i$
 \[
  \begin{array}{c|ccccc}
   \ast_0 & 0 & 1 & 2 & 3 &4\\
   \hline
   0 & 0& 1 & 2 & 3 &4\\
   1 & 3 & 1 & 2 & 4 &0\\
   2 & 4 & 1 & 2 & 0&3\\
   3 & 0& 1 & 2 & 3 &4\\
   4&0& 1 & 2 & 3 &4
  \end{array} \qquad{\rm and}\quad
  \begin{array}{c|ccccc}
   \ast_1 & 0 & 1 & 2 & 3 &4\\
   \hline
   0 & 4& 1 & 2 & 0 &3\\
   1 & 0 & 1 & 2 & 3 &4\\
   2 & 3 & 1 & 2 & 4&0\\
   3 & 4& 1 & 2 & 0 &3\\
   4 & 4& 1 & 2 & 0 &3
  \end{array}.
 \]
Both isotopes are distributive.
It is clear that these two biracks are not isomorphic,
as the $L_0^{-1}$-isotope is idempotent, whereas the $L_1^{-1}$-isotope is not.
\end{example}

%Example~\ref{ex:6.10} shows that the same $2$-permutational involutive birack can
%be obtained from two different distributive ones. It is easy to notice that the second birack is the $L_0L_1$-isotope of the first one. It is not a coincidence.
%\begin{lemma}
%Let $(X,\circ_1,\ld_{\circ_1},\bullet_1,/_{\bullet_1})$ and $(X,\circ_2,\ld_{\circ_2},\bullet_2,/_{\bullet_2})$ be $2$-reductive involutive biracks and let $\pi$ and $\varrho$ be two bijections on the set $X$ which satisfy Condition \eqref{eq:sigma}. Then the $\pi$-isotope of $(X,\circ_1,\ld_{\circ_1},\bullet_1,/_{\bullet_1})$ and the $\varrho$-isotope of $(X,\circ_2,\ld_{\circ_2},\bullet_2,/_{\bullet_2})$ define the same $2$-permutational involutive birack if and only if $(X,\circ_2,\ld_{\circ_2},\bullet_2,/_{\bullet_2})$ is the $\pi\varrho^{-1}$-isotope of $(X,\circ_1,\ld_{\circ_1},\bullet_1,/_{\bullet_1})$.
%\end{lemma}
%\begin{proof}
%It is not difficult to notice that for $x,y\in X$
%\begin{align*}
%x\circ_1\pi(y)=x\circ_2\varrho(y)\stackrel{\scriptsize y\mapsto \varrho^{-1}(y)}\Rightarrow x\circ_2y=x\circ_1\pi\varrho^{-1}(y)\stackrel{\scriptsize y\mapsto \varrho(y)}\Rightarrow x\circ_1\pi(y)=x\circ_2\varrho(y).
%\end{align*}
%\end{proof}

\begin{example}\label{ex:612}
In Example \ref{ex:new} we showed that the birack $(\{0,1,2,3\},\ast,\ld_{\ast},\diamond,/_{\diamond})$ with the $\ast$-table presented in Example~\ref{ex:nondistr} is the $\pi$-isotope, for $\pi=(01)(23)$, of the distributive birack $(X,\circ,\ld_{\circ},\bullet,/_{\bullet})$ with the left quasigroup $(X,\circ,\ld_{\circ})$ defined in Example~\ref{ex:nie2red}.
Nevertheless, there is another choice of a permutation that yields another birack. If we take $\gamma=(0123)$ then this $\gamma$ satisfies Condition \eqref{eq:sigma} as well
and we obtain the involutive 2-permutational birack with multiplication $\ast_1$:
\[
  \begin{array}{c|cccc}
   \ast_1 & 0 & 1 & 2 & 3\\
   \hline
   0 & 1 & 2 & 3 &0\\
   1 & 3 & 0 & 1 & 2 \\
   2 & 1 & 2 & 3 & 0\\
   3 & 3 & 0 & 1 &2
  \end{array}  \qquad
  \text{ or in other words }  \qquad
  \begin{array}{r@{\>=\>}l}
   L_0=L_2={\textbf{\emph R}}_0={\textbf{\emph R}}_2&(0123)\\
   L_1=L_3={\textbf{\emph R}}_1={\textbf{\emph R}}_3&(3210)
  \end{array}
  \]
  which is clearly not isomorphic to the birack $(\{0,1,2,3\},\ast,\ld_{\ast},\diamond,/_{\diamond})$.
Note that both permutations $\pi$ and $\gamma$ are actually
automorphisms of the birack $(X,\circ,\ld_{\circ},\bullet,/_{\bullet})$ but neither the $\pi$-isotope nor the $\gamma$-isotope is isomorphic to $(X,\circ,\ld_{\circ},\bullet,/_{\bullet})$.
\end{example}

For an involutive birack $(X,\circ,\ld_{\circ},\bullet,/_{\bullet})$, a bijection $h$ on the set $X$ is an isomorphism between the $\alpha$-isotope and the $\beta$-isotope of $(X,\circ,\ld_{\circ},\bullet,/_{\bullet})$ if and only if
\begin{align}\label{eq:isoiso}
hL_x\alpha(y)=h(x\circ \alpha(y))=h(x)\circ\beta h(y)=L_{h(x)}\beta h(y).
\end{align}
Hence, we obtain the following observation.
\begin{remark}
Let $(X,\circ,\ld_{\circ},\bullet,/_{\bullet})$ be an involutive birack. An automorphism $h$ of $(X,\circ,\ld_{\circ})$ is an isomorphism between the $\alpha$-isotope and the $\beta$-isotope of $(X,\circ,\ld_{\circ},\bullet,/_{\bullet})$ if and only if
\begin{align*}
\alpha=h^{-1}\beta h.
\end{align*}
\end{remark}

\section{Solutions}\label{sec:sol}
As it was written in Section~1, each solution $(X,\sigma,\tau)$ of the Yang-Baxter equation yields an involutive birack $(X,\circ,\ld_{\circ},\bullet,/_{\bullet})$. And conversely,
if $(X,\circ,\ld_{\circ},\bullet,/_{\bullet})$ is an involutive birack, then defining
\begin{align*}
r(x,y)=(\sigma(x,y),\tau(x,y))=(x\circ y,x\bullet y)=(L_x(y),{\textbf{\emph R}}_y(x)),
\end{align*}
we obtain a solution $(X,L,{\textbf{\emph R}})$ of the Yang-Baxter equation.

Such an equivalence allows us to treat each solution as an involutive birack and formulate results from Sections \ref{sec3} and \ref{sec4} in the language of solutions. In particular, $1$-permutational birack corresponds to a \emph{permutation solution} and the projection birack corresponds to the \emph{trivial solution}.
\vskip 2mm

Etingof et al. reasoned that the quotient set $X/\mathord{\sim}$, by the relation \eqref{rel:sim}, has a structure of a solution  $(X/\mathord{\sim},\overline{\sigma},\overline{\tau})$  with $\overline{\sigma}(x/\mathord{\sim},y/\mathord{\sim})=\sigma(x,y)/\mathord{\sim}$ and $\overline{\tau}(x/\mathord{\sim},y/\mathord{\sim})=\tau(x,y)/\mathord{\sim}$ for $x/\mathord{\sim},y/\mathord{\sim}\in X/\mathord{\sim}$  and $x\in x/\mathord{\sim}, y\in y/\mathord{\sim}$. They called such solution the \emph{retraction} of $(X,\sigma,\tau)$ and denoted it by ${\rm Ret}(X,\sigma,\tau)$. The birack corresponding to the retraction solution ${\rm Ret}(X,\sigma,\tau)$ is the quotient birack $(X/\mathord{\sim},\circ,\ld_{\circ},\bullet,/_{\bullet})$.

Among solutions, an important role is played by \emph{multipermutation solutions}, see e.g. \cite{CJO10,GIC12,Ven}.
Let $(X,\sigma,\tau)$ be a solution. One defines \emph{iterated retraction} in the following way: ${\rm Ret}^0(X,\sigma,\tau):=(X,\sigma,\tau)$ and
${\rm Ret}^k(X,\sigma,\tau):={\rm Ret}({\rm Ret}^{k-1}(X,\sigma,\tau))$, for any natural number $k>1$.
A solution $(X,\sigma,\tau)$ is called a \emph{multipermutation solution of level $m$} if $m$ is the least nonnegative integer such that
\[
|{\rm Ret}^{m}(X,\sigma,\tau)|=1.
\]

In the language of an involutive birack $(X,\circ,\ld_{\circ},\bullet,/_{\bullet})$ this means that applying $m$ times the congruence $\sim$ to the subsequent quotient biracks, one obtains the one-element birack.

Let us consider $(X/\mathord{\sim},\circ,\ld_{\circ},\bullet,/_{\bullet})$, the quotient birack of $(X,\circ,\ld_{\circ},\bullet,/_{\bullet})$  and denote it by  ${\rm Ret}(X,\circ,\bullet)$.
Let ${\rm Ret}^0(X,\circ,\bullet):=(X,\circ,\bullet)$ and
${\rm Ret}^k(X,\circ,\bullet):={\rm Ret}({\rm Ret}^{k-1}(X,\circ,\bullet))$, for any natural number $k>1$.

\begin{de}
An involutive birack is a \emph{multipermutation birack} if there exists a positive integer $m$ such that ${\rm Ret}^{m-1}(X,\circ,\bullet)$ is a $1$-permutational birack.
A birack of $(X,\circ,\ld_{\circ},\bullet,/_{\bullet})$ is called a \emph{multipermutation birack of level $m$} if $m$ is the least nonnegative integer $m$ such that
\[
|{\rm Ret}^{m}(X,\circ,\bullet)|=1.
\]
\end{de}
A birack $(X,\circ,\ld_{\circ},\bullet,/_{\bullet})$ is \emph{irretractable} if ${\rm Ret}(X,\circ,\bullet):=(X,\circ,\bullet)$, i.e.
$\sim$ is the trivial relation.

\begin{obser}\cite[Section 3]{GIM07}
Let $|X|\geq 2$. A square-free solution $(X,\sigma,\tau)$ is a multipermutation solution of level $m$ if and only if
${\rm Ret}^{m-1}(X,\sigma,\tau)$ is a trivial solution.
\end{obser}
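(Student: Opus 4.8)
The plan is to pass entirely to the language of involutive biracks, where square-freeness becomes idempotency and the trivial solution becomes the projection birack (the one with $L_x=\id$ for every $x$), and then to isolate a single structural fact about the retraction. Recall that ${\rm Ret}(X,\circ,\bullet)=(X/\mathord{\sim},\circ,\bullet)$ with $a\sim b\Leftrightarrow L_a=L_b$, and that by \eqref{rel:sim} the relation $\sim$ is a congruence. First I would record that retraction preserves square-freeness: since $\sim$ is a congruence, the induced operation satisfies $(x/\mathord{\sim})\circ(x/\mathord{\sim})=(x\circ x)/\mathord{\sim}=x/\mathord{\sim}$ whenever $x\circ x=x$, so every iterated retract ${\rm Ret}^k(X,\circ,\bullet)$ of a square-free solution is again square-free (idempotent).

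The heart of the argument is the following equivalence for an idempotent involutive birack $Y$:
\[
|{\rm Ret}(Y)|=1 \quad\Longleftrightarrow\quad Y \text{ is a projection birack.}
\]
For the forward implication, $|{\rm Ret}(Y)|=1$ means all elements of $Y$ are $\sim$-equivalent, i.e. there is a single permutation $L$ with $L_x=L$ for every $x\in Y$; idempotency then gives $L(x)=L_x(x)=x\circ x=x$ for all $x$, so $L=\id$ and $Y$ is a projection birack. Conversely, if $Y$ is a projection birack then $L_x=\id=L_y$ for all $x,y$, hence $x\sim y$ always and $|{\rm Ret}(Y)|=1$. I expect this forward step — turning ``all left translations coincide'' together with idempotency into ``all left translations are the identity'' — to be the only genuinely non-routine point; the rest is congruence bookkeeping and the definition of level.

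With this in hand, both directions follow by applying the equivalence to $Y={\rm Ret}^{m-1}(X,\circ,\bullet)$, which is square-free by the first step. If $X$ has multipermutation level $m$ then $|{\rm Ret}^m(X,\circ,\bullet)|=|{\rm Ret}(Y)|=1$, so $Y$ is a projection birack, i.e. ${\rm Ret}^{m-1}(X,\sigma,\tau)$ is trivial. Conversely, if ${\rm Ret}^{m-1}(X,\sigma,\tau)$ is a trivial solution on at least two points, then $|{\rm Ret}^m(X,\circ,\bullet)|=1$ while $|{\rm Ret}^{m-1}(X,\circ,\bullet)|\ge 2$, so $m$ is the least index with $|{\rm Ret}^m|=1$ and the level is exactly $m$. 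The single point requiring care is this last size bookkeeping: the equivalence as stated holds once ``level $m$'' is read as the least index with $|{\rm Ret}^m|=1$, which automatically forces $|{\rm Ret}^{m-1}|\ge 2$ and thereby rules out the degenerate case in which ${\rm Ret}^{m-1}$ has already collapsed to a point.
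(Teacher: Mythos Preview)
The paper does not supply a proof of this Observation; it is quoted from \cite[Section 3]{GIM07} and used as background. So there is no in-paper argument to compare against.

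Your argument is correct. The translation to idempotent involutive biracks is accurate, the stability of idempotency under retraction follows immediately from $\sim$ being a congruence, and the key equivalence --- that an idempotent $1$-permutational birack is automatically a projection birack because the common translation $L$ must fix every point --- is exactly the content one needs. The only genuine subtlety, which you correctly flag, is the converse direction: the statement as written needs the tacit convention that ``${\rm Ret}^{m-1}(X,\sigma,\tau)$ is a trivial solution'' means trivial on a set of size at least~$2$, otherwise a one-point retract would vacuously satisfy the hypothesis without the level being exactly~$m$. This is a standard imprecision in the literature rather than a defect in your reasoning.
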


\begin{theorem}\cite[Proposition 4.7]{GI18}\label{GI:mper}
Let $(X,\sigma,\tau)$ be a solution and $|X|\geq 2$.
$(X,\sigma,\tau)$ is a multipermutation solution of level $0\leq m$ if and only if Condition \eqref{eq:mper} holds for the corresponding birack $(X,\circ,\ld_{\circ},\bullet,/_{\bullet})$.
\end{theorem}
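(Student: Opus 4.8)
The plan is to pass to the associated involutive birack and argue entirely on the left quasigroup $(X,\circ,\ld_{\circ})$, since Condition~\eqref{eq:mper} is a property of $\circ$ alone. I recall from \eqref{rel:sim} that the retraction relation is $a\sim b\Leftrightarrow L_a=L_b\Leftrightarrow\forall z\in X\ (a\circ z=b\circ z)$, that $\sim$ is a congruence, and that the birack of ${\rm Ret}(X,\sigma,\tau)$ is precisely the quotient $(X/\mathord{\sim},\circ,\ld_{\circ},\bullet,/_{\bullet})$. Hence $|{\rm Ret}^m(X,\sigma,\tau)|=1$ exactly when $|{\rm Ret}^m(X,\circ,\bullet)|=1$, and it suffices to prove that $(X,\circ,\ld_{\circ})$ is $m$-permutational if and only if $|{\rm Ret}^m(X,\circ,\bullet)|=1$ (the statement ``level $\le m$''); since both properties are upward-closed in $m$, comparing the least such $m$ on each side then identifies the exact level as well. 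For brevity write $w_k(x;x_1,\dots,x_k)$ for the left-bracketed product $(\dots((x\circ x_1)\circ x_2)\dots)\circ x_k$, so that \eqref{eq:mper} reads $w_m(x;x_1,\dots,x_m)=w_m(y;x_1,\dots,x_m)$ for all arguments, and note the recursion $w_k(x;x_1,\dots,x_k)\circ z=w_{k+1}(x;x_1,\dots,x_k,z)$.

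The crux is a single reduction: $(X,\circ,\ld_{\circ})$ is $m$-permutational if and only if its retract $(X/\mathord{\sim},\circ,\ld_{\circ})$ is $(m-1)$-permutational. To see this I would unfold the retract condition through the congruence $\sim$. Since $\sim$ is a congruence, nested $\circ$-words descend to the quotient, i.e. $\overline{w_{m-1}(x;x_1,\dots,x_{m-1})}=w_{m-1}(\overline{x};\overline{x_1},\dots,\overline{x_{m-1}})$, and every element of $X/\mathord{\sim}$ is some $\overline{z}$; hence $(m-1)$-permutationality of the retract says precisely that $w_{m-1}(x;x_1,\dots,x_{m-1})\sim w_{m-1}(y;x_1,\dots,x_{m-1})$ for all $x,y,x_1,\dots,x_{m-1}\in X$. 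Applying the definition of $\sim$, this $\sim$-equality means $w_{m-1}(x;x_1,\dots,x_{m-1})\circ z=w_{m-1}(y;x_1,\dots,x_{m-1})\circ z$ for every $z\in X$, which, writing $z=x_m$ and using the recursion above, is exactly \eqref{eq:mper} for $(X,\circ,\ld_{\circ})$. Every link in this chain is an equivalence, so both implications follow at once.

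With this reduction in hand I would finish by induction on $m$. For the base case $m=0$, Condition~\eqref{eq:mper} reads $x=y$ for all $x,y$, i.e. $|X|=1$, which is the same as $|{\rm Ret}^0(X,\circ,\bullet)|=|X|=1$. For the step, the reduction gives that $(X,\circ,\ld_{\circ})$ is $m$-permutational iff $(X/\mathord{\sim},\circ,\ld_{\circ})$ is $(m-1)$-permutational, which by the induction hypothesis applied to the retract holds iff $|{\rm Ret}^{m-1}(X/\mathord{\sim},\circ,\bullet)|=|{\rm Ret}^{m}(X,\circ,\bullet)|=1$. This establishes $m$-permutationality $\Leftrightarrow |{\rm Ret}^m|=1$, and translating back through the solution--birack correspondence yields the theorem; the hypothesis $|X|\ge 2$ only rules out the vacuous level-$0$ case.

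I expect no serious obstacle here: the whole argument is the observation that the defining relation $a\sim b\Leftrightarrow(\forall z)\,a\circ z=b\circ z$ is exactly ``agreement after one further right factor'', so peeling off one layer of retraction trades for one unit of permutational depth. The only points needing care are formal — keeping the bracketing of $w_k$ consistent, invoking that $\sim$ is a congruence so that words descend to the quotient (cited from \cite{ESS}), and using that the $\sigma$-description of $\sim$ in \eqref{rel:sim} and the $\tau$-description in the Introduction define the same retraction on an involutive solution, a fact already packaged into the correspondence we rely on.
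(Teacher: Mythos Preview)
The paper does not supply its own proof of this theorem; it is simply quoted from \cite[Proposition 4.7]{GI18}. So there is nothing in the present paper to compare your argument against.

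That said, your proof is correct and is essentially the natural one. The key observation --- that $m$-permutationality of $(X,\circ)$ is equivalent to $(m-1)$-permutationality of $(X/\mathord{\sim},\circ)$ because the defining clause $a\sim b\Leftrightarrow(\forall z)\,a\circ z=b\circ z$ absorbs exactly one right factor --- is precisely the idea behind the result, and your induction with base $m=0$ is clean. The only facts you import are that $\sim$ is a congruence of the involutive birack (so words descend to the quotient and the quotient is again an involutive birack to which the induction hypothesis applies), and that the $\sigma$- and $\tau$-definitions of $\sim$ coincide for involutive solutions; both are available in the paper's framework via \cite{ESS}. Your remark that both properties are upward-closed in~$m$, hence the minimal~$m$ agrees on both sides, correctly handles the ``exact level'' reading of the statement.
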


\begin{de}
A solution is \emph{distributive} ($2$-\emph{reductive}, $2$-\emph{permutational}, \emph{medial}, respectively), if it corresponds to a distributive ($2$-reductive, $2$-permutational, medial, respectively) involutive birack.
\end{de}

\begin{fact}\cite[Proposition 8.2]{GIC12}, \cite[Proposition 4.7]{GI18}
A square-free solution $(X,\sigma,\tau)$ is multipermutation of level $2$ if and only if it is distributive. In this case it has an abelian permutation group $\left\langle \sigma_x\colon x\in X\right\rangle$.
More generally, if a solution satisfies Condition $(\ast)$ then it is a multipermutation solution of level $2$ if and only if it is $2$-reductive.
\end{fact}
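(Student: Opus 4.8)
The plan is to read off the statement from the retraction-level criterion of Theorem~\ref{GI:mper}, combined with the equivalences among $2$-permutationality, $2$-reductivity, distributivity and commutativity of $\LMlt(X)$ that were established for involutive biracks in Sections~\ref{sec2}, \ref{sec3} and \ref{sec4}. The first thing I would record is the purely notational observation that $\sigma_x=\sigma(x,\_)=L_x$, so that the permutation group $\langle\sigma_x\colon x\in X\rangle$ of the solution is exactly the left multiplication group $\LMlt(X)$ of the associated left quasigroup $(X,\circ,\ld_{\circ})$. Every assertion about the solution will then be translated into a statement about this left quasigroup.

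I would prove the more general claim first, assuming only Condition~$(\ast)$. By Theorem~\ref{GI:mper} the solution is a multipermutation solution of level at most~$2$ if and only if the left quasigroup $(X,\circ,\ld_{\circ})$ is $2$-permutational. Under Condition~$(\ast)$, Lemma~\ref{lm:mper} turns $2$-permutationality into $2$-reductivity, the reverse implication holding unconditionally by Observation~\ref{ob:1} (every $2$-reductive left quasigroup is $2$-permutational). Finally Corollary~\ref{cor:eqvdis} identifies $2$-reductivity of an involutive birack with its being left distributive, hence distributive. Chaining these equivalences yields: under $(\ast)$, the solution is of multipermutation level~$2$ if and only if it is $2$-reductive, equivalently distributive.

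The square-free case is then a specialization. Square-freeness means $\sigma(x,x)=x$, i.e. idempotence of $(X,\circ,\ld_{\circ})$, and an idempotent left quasigroup automatically satisfies Condition~$(\ast)$ (take $a=x$, so that $a\circ x=x$). Applying the previous paragraph gives that a square-free solution is of multipermutation level~$2$ if and only if it is $2$-reductive, which for involutive biracks is the same as being distributive by Corollary~\ref{cor:eqvdis}. For the assertion about the permutation group I would invoke the implication $(1)\Rightarrow(2)$ of Lemma~\ref{lm:2red}: a $2$-reductive left quasigroup that is left distributive has abelian $\LMlt(X)$. Since in this case $(X,\circ,\ld_{\circ})$ is a $2$-reductive rack, $\LMlt(X)=\langle\sigma_x\colon x\in X\rangle$ is abelian, as claimed.

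The computations are essentially bookkeeping, so the only genuinely delicate point is the reading of the phrase ``multipermutation solution of level~$2$''. The equivalence with $2$-permutationality supplied by Theorem~\ref{GI:mper} is really an equivalence with level \emph{at most}~$2$ (its statement allows any level between $0$ and $m$); for instance the trivial solution is distributive yet of level~$1$, so one cannot read ``level~$2$'' as ``level exactly~$2$'' in the direction distributive $\Rightarrow$ level~$2$. I would therefore state and use the criterion as ``level at most~$2$'' throughout, exactly as in Theorem~\ref{GI:mper}, leaving the degenerate lower levels to the already treated $1$-permutational case. All the substantive content -- that retraction to a point in two steps is governed by the identity~\eqref{eq:mper} -- is imported from Theorem~\ref{GI:mper}; the remaining work is the translation through Lemma~\ref{lm:mper}, Corollary~\ref{cor:eqvdis} and Lemma~\ref{lm:2red}.
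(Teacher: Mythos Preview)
Your argument is correct. Note, however, that the paper does not actually prove this Fact: it is stated with attribution to \cite[Proposition 8.2]{GIC12} and \cite[Proposition 4.7]{GI18} and no proof is given in the text. So there is no ``paper's own proof'' to compare against; your derivation from the paper's internal results (Theorem~\ref{GI:mper}, Lemma~\ref{lm:mper}, Observation~\ref{ob:1}, Corollary~\ref{cor:eqvdis}, Lemma~\ref{lm:2red}) is a legitimate reconstruction, and your care about ``level at most~$2$'' versus ``level exactly~$2$'' is well placed.
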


By Corollary \ref{cor:eqvdis}, Proposition \ref{lm:birmed}, Corollary \ref{cor:ldidemp} and  Theorem \ref{GI:mper},
we can generalize some results given in \cite{GIM08, GIC12, GI18}.
\begin{theorem}\label{th:medial-char}
Let $(X,\sigma,\tau)$  be a solution. Then
\begin{enumerate}
\item $(X,\sigma,\tau)$ is a multipermutation solution of level $2$ if and only if it is medial.
\item If $(X,\sigma,\tau)$ is distributive then it is a multipermutation solution of level~$2$.
\end{enumerate}
\end{theorem}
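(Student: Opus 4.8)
The plan is to prove both parts purely by assembling equivalences already available for the underlying left quasigroup, translated through the birack--solution dictionary; essentially no new computation is needed. For part (1), I would chain two biconditionals. First, by definition a solution $(X,\sigma,\tau)$ is medial exactly when its associated involutive birack is medial, i.e. when $(X,\circ,\ld_{\circ})$ is medial; and by Proposition~\ref{lm:birmed} an involutive birack is medial if and only if it is $2$-permutational, that is, if and only if $(X,\circ,\ld_{\circ})$ satisfies \eqref{eq:2per}. Second, since \eqref{eq:mper} specializes for $m=2$ to exactly the $2$-permutational law \eqref{eq:2per}, Theorem~\ref{GI:mper} applied with $m=2$ says that this law holds for the corresponding birack if and only if $(X,\sigma,\tau)$ is a multipermutation solution of level~$2$. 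Composing these two equivalences gives part (1): medial $\Leftrightarrow$ $2$-permutational $\Leftrightarrow$ multipermutation of level~$2$.

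For part (2), I would argue by a one-directional chain. Assuming $(X,\sigma,\tau)$ distributive means, by definition, that the associated involutive birack is (left) distributive; by Corollary~\ref{cor:eqvdis} this is equivalent to $(X,\circ,\ld_{\circ})$ being $2$-reductive. Observation~\ref{ob:1} then yields that every $2$-reductive left quasigroup is $2$-permutational, so the birack is $2$-permutational, hence medial by Proposition~\ref{lm:birmed}. At this point part (1) (or, equivalently, a direct appeal to Theorem~\ref{GI:mper} with $m=2$) shows that the solution is a multipermutation solution of level~$2$, which is exactly the assertion of part (2).

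The only genuinely delicate point is the bookkeeping around the precise meaning of the level. Theorem~\ref{GI:mper} is stated for $\left|X\right|\geq 2$, and the $m$-permutational law is inherited downward (an $(m-1)$-permutational quasigroup is automatically $m$-permutational), so Condition~\eqref{eq:2per} characterizes multipermutation level \emph{at most}~$2$ rather than exactly~$2$; in particular a one-element solution is trivially medial but of level~$0$. I would therefore either restrict to $\left|X\right|\geq 2$ and read ``level~$2$'' in the sense fixed by Theorem~\ref{GI:mper}, or explicitly dispose of the degenerate one-element and lower-level cases before running the chain. Once this convention is settled, the substance of the proof lives entirely in the cited results --- Proposition~\ref{lm:birmed}, Theorem~\ref{GI:mper}, Corollary~\ref{cor:eqvdis}, and Observation~\ref{ob:1} --- and the write-up is simply their composition.
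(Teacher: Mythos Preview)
Your proposal is correct and follows essentially the same route as the paper: the paper does not give an explicit proof of Theorem~\ref{th:medial-char} but simply points to Corollary~\ref{cor:eqvdis}, Proposition~\ref{lm:birmed}, Corollary~\ref{cor:ldidemp} and Theorem~\ref{GI:mper}, and your chain (medial $\Leftrightarrow$ $2$-permutational via Proposition~\ref{lm:birmed}, then $\Leftrightarrow$ multipermutation level~$2$ via Theorem~\ref{GI:mper}; and distributive $\Rightarrow$ $2$-reductive $\Rightarrow$ $2$-permutational for part~(2)) is exactly the intended composition. Your remark on the ``level at most~$2$'' versus ``exactly~$2$'' bookkeeping and the $|X|\geq 2$ hypothesis is well taken; the paper tacitly adopts the convention from Theorem~\ref{GI:mper}.
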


\begin{theorem}\label{th:dist-char}
Let $(X,\sigma,\tau)$  be a multipermutation solution of level $2$. The following conditions are equivalent:
\begin{enumerate}
\item $(X,\sigma,\tau)$ is distributive,
\item $(X,\sigma,\tau)$ is $2$-reductive,
\item $(X,\sigma,\tau)$ satisfies Condition {\bf lri}, i.e. $\forall x\in X$ $\tau_x=\sigma_x^{-1}$,
\item ${\rm Ret}(X,\sigma,\tau)$ is the trivial solution.
\end{enumerate}
\end{theorem}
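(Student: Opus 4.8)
The plan is to pass everything to the associated involutive birack $(X,\circ,\ld_{\circ},\bullet,/_{\bullet})$ and then to assemble the lemmas of Sections~\ref{sec3} and~\ref{sec4}. Since $(X,\sigma,\tau)$ is a multipermutation solution of level~$2$, Theorem~\ref{GI:mper} tells us that the corresponding birack is $2$-permutational, hence medial by Proposition~\ref{lm:birmed}; this is the standing hypothesis under which all the relevant lemmas apply. Under the dictionary between solutions and biracks (with $\sigma_x=L_x$ and $\tau_x={\textbf{\emph R}}_x$), condition~(1) says the birack is left distributive, (2) says it is $2$-reductive, (3) is exactly Condition {\bf lri} (the equality $\tau_x=\sigma_x^{-1}$ is $L_x={\textbf{\emph R}}_x^{-1}$), and (4) says the quotient birack $(X/\mathord{\sim},\circ,\ld_{\circ},\bullet,/_{\bullet})$ is the projection birack.

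First I would record the two equivalences that hold directly. The equivalence $(1)\Leftrightarrow(2)$ is Corollary~\ref{cor:eqvdis}, which is valid for every involutive birack. The equivalence $(1)\Leftrightarrow(3)$ is Lemma~\ref{c:lri2red}, whose hypothesis of $2$-permutationality is guaranteed by our standing assumption. Thus conditions $(1)$, $(2)$ and $(3)$ are already mutually equivalent, with no further calculation needed.

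It remains to bring $(4)$ into the cycle. For $(1)\Rightarrow(4)$ I would invoke Lemma~\ref{lm:sf}: the quotient of an involutive distributive birack by $\sim$ is a projection birack, which is precisely the statement that ${\rm Ret}(X,\sigma,\tau)$ is the trivial solution. For the converse $(4)\Rightarrow(1)$ I would use that the projection birack is idempotent (there $x\circ x=x$ and $x\bullet x=x$), so that the quotient $(X/\mathord{\sim},\circ,\ld_{\circ},\bullet,/_{\bullet})$ is idempotent; Corollary~\ref{cor:ldidemp}, which asserts that a $2$-permutational involutive birack is distributive if and only if its retraction is idempotent, then yields distributivity. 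This closes $(1)\Rightarrow(4)\Rightarrow(1)$ and completes all the equivalences.

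The only genuinely non-formal point is this last step: one must observe that \emph{trivial solution} translates to \emph{projection birack} and that a projection birack is idempotent, which is exactly the hypothesis needed to apply Corollary~\ref{cor:ldidemp}. Everything else is a bookkeeping assembly of statements already proved earlier in the paper, so I expect no real obstacle beyond correctly matching each of the four conditions to its birack-theoretic counterpart and keeping track of which lemma supplies which implication.
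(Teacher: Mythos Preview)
Your proposal is correct and matches the paper's approach: the paper does not write out a proof for this theorem but simply states it after the sentence ``By Corollary~\ref{cor:eqvdis}, Proposition~\ref{lm:birmed}, Corollary~\ref{cor:ldidemp} and Theorem~\ref{GI:mper}\ldots'', and your assembly is exactly the intended one (with Lemma~\ref{c:lri2red} and Lemma~\ref{lm:sf} filling in the remaining implications for conditions~(3) and~(4)). There is nothing to add or correct.
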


By Theorem \ref{th5.8} we can completely describe all distributive solutions.

\begin{theorem}
Each distributive solution $(X,\sigma,\tau)$ is a disjoint union, over a set $I$, of abelian groups
$A_j=\left<\{a_{i,j}\mid i\in I\}\right>$, for every $j\in I$, with
\begin{equation}\label{eq:7.8}
\sigma_x(y)=y+a_{i,j}\quad {\rm and} \quad \tau_y(x)= x-a_{j,i},
\end{equation}
where $x\in A_i$ and $y\in A_j$.
\end{theorem}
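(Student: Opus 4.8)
The plan is to reduce the statement entirely to Theorem~\ref{th5.8} via the equivalence between solutions and involutive biracks recorded at the start of Section~\ref{sec:sol}. A distributive solution is by definition one whose associated involutive birack $(X,\circ,\ld_{\circ},\bullet,/_{\bullet})$ is distributive, so it suffices to rewrite the already-known decomposition of that birack in the language of $\sigma$ and $\tau$.

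First I would recall the dictionary between the two structures: the solution $(X,\sigma,\tau)$ corresponds to the birack with $\sigma(x,y)=x\circ y=L_x(y)$ and $\tau(x,y)=x\bullet y={\textbf{\emph R}}_y(x)$, so that $\sigma_x(y)=x\circ y$ and $\tau_y(x)=x\bullet y$. Distributivity of the solution means precisely that $(X,\circ,\ld_{\circ},\bullet,/_{\bullet})$ is an involutive distributive birack, hence Theorem~\ref{th5.8} applies: the set $X$ is the disjoint union over an index set $I$ of abelian groups $A_j=\langle\{a_{i,j}\mid i\in I\}\rangle$, and for $x\in A_i$, $y\in A_j$ one has $x\circ y=y+a_{i,j}$ and $x\bullet y=x-a_{j,i}$.

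Then I would substitute these formulas into the dictionary: for $x\in A_i$ and $y\in A_j$ this gives $\sigma_x(y)=x\circ y=y+a_{i,j}$ and $\tau_y(x)=x\bullet y=x-a_{j,i}$, which are exactly Equations~\eqref{eq:7.8}, finishing the argument. There is essentially no obstacle here; the only point needing a moment of care is the index bookkeeping for $\tau$. Since in $\tau_y(x)$ the first argument $x$ lies in $A_i$ and the second argument $y$ in $A_j$, the constant entering $x\bullet y$ is $a_{j,i}$ with the reversed order of indices, matching the stated formula $\tau_y(x)=x-a_{j,i}$.
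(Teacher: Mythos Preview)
Your proposal is correct and matches the paper's own approach: the paper states this theorem immediately after the sentence ``By Theorem \ref{th5.8} we can completely describe all distributive solutions'' and gives no further proof, so the intended argument is exactly the translation you carry out. Your added remark on the index order in $\tau_y(x)=x-a_{j,i}$ is a helpful sanity check but not a deviation from the paper.
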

By Corollary \ref{stu} each distributive solution satisfies Condition {\bf stu}, introduced in \cite[Definition 5.1]{GIM08}, which means that it is trivially a strong twisted union of abelian groups $A_j$.

\begin{example}
Let $I$ be a (finite or infinite) index set
and let $A_i$, for $i\in I$, be cyclic groups.
Let $(a_{i,j})_{i,j\in I}$ be constants
such that $a_{i,j}\in A_j$, for all~$i,j\in I$,
and, for each~$j\in I$, there exists at least one~$i\in I$,
such that $a_{i,j}$ is a generator of the group $A_j$.
Then $(\bigcup A_i,\sigma,\tau)$, with $\sigma$ and $\tau$
defined in \eqref{eq:7.8}, is a distributive solution.

\end{example}

We can construct all distributive solutions of~size~$n$ using
the following algorithm:
\begin{algorithm}
Outputs all distributive solutions of size~$n$:
\begin{enumerate}
 \item For all partitionings $n=n_1+n_2+\cdots +n_k$ do (2)--(4).
 \item For all abelian groups $A_1$, \dots ,$A_k$ of size $|A_i|=n_i$ do (3)--(4).
 \item For all constants $a_{i,j}\in A_j$ do (4).
 \item If, for all $1\leq j\leq k$, we have $A_j=\langle \{a_{i,j}\mid i\in I\}\rangle$
 then construct a solution $(\bigcup A_i,\sigma,\tau)$ using \eqref{eq:7.8}.
\end{enumerate}
\end{algorithm}
When all solutions are constructed, we can get rid of isomorphic
copies using Theorem~\ref{th:iso}.
\vskip 2mm

In \cite{CJR} the permutation group $\left\langle \sigma_x\colon x\in X\right\rangle$ of a \emph{finite} solution $(X,\sigma,\tau)$ was called \emph{the involutive Yang-Baxter group} (IYB group) associated to the solution $(X,\sigma,\tau)$. In particular, Cedo et al. showed in \cite[Corollary 3.11]{CJR} that each finite nilpotent group of class 2 (and thus each finite abelian group) is an IYB group. Here, using the construction of the sum of a trivial affine mesh, we present short direct proof of this fact for an abelian group of an arbitrary cardinality.
\begin{theorem}\label{thm7.11}
 Let $A$ be an abelian group. Then there exists a solution
 $(X,\sigma,\tau)$ with its permutation group isomorphic to
 $A$.
\end{theorem}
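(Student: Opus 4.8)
The plan is to realise $A$ as the group $\LMlt(X)$ of a suitable distributive solution and then to quote the mesh construction of Theorem~\ref{th:tri}. Recall that the solution attached to an involutive birack satisfies $\sigma(x,y)=x\circ y=L_x(y)$, so $\sigma_x=L_x$ and its permutation group is exactly $\langle\sigma_x:x\in X\rangle=\LMlt(X)$. Hence it suffices to produce a $2$-reductive rack $(X,\circ,\ld_{\circ})$ with $\LMlt(X)\cong A$: by Lemma~\ref{lm:2red} such a rack is right cyclic, it is automatically non-degenerate (the map $x\mapsto x\ld_{\circ}x$ is a translation on each orbit, hence a bijection), so Theorem~\ref{th:rclq} promotes it to an involutive birack, that is, to a solution.

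The observation driving the construction is the description of left translations for the sum of a trivial affine mesh: if $a\in A_i$ and $b\in A_j$ then $L_a(b)=b+c_{i,j}$, so $L_a$ depends only on the orbit $A_i$ of $a$. Thus $\LMlt(X)$ is generated by one translation per orbit, and a singleton orbit contributes a generator that fixes that orbit pointwise. This suggests using a single ``large'' orbit equal to $A$, on which $A$ acts by its regular translation representation, together with enough auxiliary orbits to supply a generating set of these translations. I would take the auxiliary orbits to be trivial groups, so that the corresponding generators fix them and do not enlarge the acting group.

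Concretely, I would put $I=\{*\}\sqcup A$ and define the mesh $\mathcal A=((A_i)_{i\in I},(c_{i,j}))$ by $A_*=A$, $A_a=\Z_1$ for every $a\in A$, and constants $c_{a,*}=a$ for $a\in A$, $c_{*,*}=0$, while $c_{i,a}=0$ is forced for all $i\in I$ and $a\in A$. The generation conditions then hold, since $\langle\{c_{i,*}:i\in I\}\rangle=\langle A\rangle=A=A_*$ and $\langle\{c_{i,a}\}\rangle=\Z_1=A_a$, so $\mathcal A$ is a genuine trivial affine mesh. By Theorem~\ref{th:tri} its sum is a $2$-reductive rack on $X=A\sqcup\bigsqcup_{a\in A}\Z_1$, which by the first paragraph yields a solution $(X,\sigma,\tau)$.

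It then remains to compute $\LMlt(X)$: for $x$ in the large orbit, $L_x$ translates $A_*$ by $c_{*,*}=0$ and fixes every singleton, so $L_x=\id$; for $x$ in the singleton indexed by $a$, $L_x$ translates $A_*$ by $c_{a,*}=a$ and fixes all singletons. Hence $\LMlt(X)$ is the group of translations of $A_*=A$ by elements of the subgroup generated by $\{c_{a,*}\}=A$, namely all translations of $A$, which is isomorphic to $A$. The one point needing care---the heart of the argument---is getting the action neither too small nor too large: faithfulness (so that $\LMlt(X)$ is not a proper quotient of $A$) comes for free from the regular action on the large orbit, while the fact that $\LMlt(X)$ does not grow beyond $A$ is exactly why the auxiliary orbits are chosen to be singletons, on which every generator acts trivially. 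This also explains why several orbits are unavoidable: a single-orbit mesh would make all $L_x$ coincide and force $\LMlt(X)$ to be merely cyclic.
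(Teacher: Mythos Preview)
Your proof is correct and follows the same overall strategy as the paper: build a $2$-reductive rack as the sum of a trivial affine mesh whose $\LMlt$ is isomorphic to~$A$, then pass to the associated solution. The only difference is in the specific mesh chosen. The paper picks a generating set $\{g_i:i\in I\}$ of~$A$, takes every orbit equal to~$A$ with constants $c_{i,j}=g_i$, so each $L_a$ acts as simultaneous translation by~$g_i$ on all copies; your construction instead keeps a single large orbit~$A$ and adjoins one singleton orbit per element of~$A$ to supply the translations. Both yield $\LMlt(X)\cong A$ for the same reason (faithful translation action on a copy of~$A$, generators ranging over a generating set), so this is a cosmetic variation rather than a different argument. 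Your version has the mild advantage of making non-degeneracy completely transparent ($T=\id$), at the cost of a larger index set.
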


\begin{proof}
 Let $A$ be generated by a (finite or infinite) set
 $\{g_i\colon\ i\in I\}$. We construct the solution~$(X,\sigma,\tau)$ as
 the sum of the trivial affine mesh $((A_i)_{i\in I},\,(c_{i,j})_{i,j\in I})$ over~$I$,
 with $A_i=A$ and $c_{i,j}=g_i$, for all~$i,j\in I$.

 By construction, $L_a(b)=b+c_{i,j}=b+g_i$, for all $a\in A_i$
 and $b\in A_j$. Therefore the permutation group
 consists solely of mappings $b_j\mapsto b_j+c$, for each $j\in I$
 and some~$c\in A$. This means that the
 group $\left\langle \sigma_x\colon x\in X\right\rangle$ naturally  embeds into~$A$.
 Moreover, the permutation group is generated by $L_a$, for $a\in A$, and hence
 it is isomorphic to~$A$.
\end{proof}

By Corollary \ref{cor:cor1} and Lemma \ref{lm:2p_iso} each multipermutation solution of level 2 defines a distributive one.

\begin{theorem}\label{th:2persol}
Let $(X,\sigma,\tau)$ be a multipermutation solution of level~$2$
and $e\in X$. Then $(X,\sigma',\tau')$, where $\sigma_x'=\sigma_x\sigma_e^{-1}$ and $\tau_y'=\sigma_e\tau_{\sigma_e^{-1}(y)}$, for $x,y\in X$, is a distributive solution.
\end{theorem}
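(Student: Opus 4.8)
The plan is to reduce the statement to the birack-level result already established in Lemma~\ref{lm:2p_iso} and then translate back into the language of solutions. First I would pass to the involutive birack $(X,\circ,\ld_{\circ},\bullet,/_{\bullet})$ associated to $(X,\sigma,\tau)$, in which $\sigma_x=L_x$ and $\tau_y={\textbf{\emph R}}_y$, so that $\sigma(x,y)=L_x(y)$ and $\tau(x,y)=x\bullet y$. Since $(X,\sigma,\tau)$ is a multipermutation solution of level~$2$, Theorem~\ref{GI:mper} applied with $m=2$ guarantees that Condition~\eqref{eq:2per} holds, that is, the left quasigroup $(X,\circ,\ld_{\circ})$ is $2$-permutational; equivalently, the birack itself is $2$-permutational.

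Next I would apply Lemma~\ref{lm:2p_iso} with the chosen element $e\in X$: the $L_e^{-1}$-isotope $(X,\ast,\ld_{\ast},\diamond,/_{\diamond})$ of a $2$-permutational involutive birack is distributive. By the birack--solution correspondence opening Section~\ref{sec:sol}, this distributive birack corresponds to a distributive solution, so it only remains to identify its left translations with $\sigma'$ and its right translations with $\tau'$.

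For the left operation the identification is immediate from the definition of the $\pi$-isotope with $\pi=L_e^{-1}$ in Definition~\ref{rem:iso}: we have $x\ast y=L_x\pi(y)=L_xL_e^{-1}(y)=\sigma_x\sigma_e^{-1}(y)$, which matches $\sigma'_x=\sigma_x\sigma_e^{-1}$. For the right operation I would use the explicit formula for $\diamond$ recorded in Section~\ref{sec4}, namely $x\diamond y=\pi^{-1}(x\bullet\pi(y))$. Substituting $\pi=L_e^{-1}$, hence $\pi^{-1}=L_e=\sigma_e$, and writing $x\bullet z=\tau_z(x)$, one gets $x\diamond y=\sigma_e\bigl(x\bullet\sigma_e^{-1}(y)\bigr)=\sigma_e\tau_{\sigma_e^{-1}(y)}(x)$, so that $\tau'_y=\sigma_e\tau_{\sigma_e^{-1}(y)}$, exactly as claimed.

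The only step requiring genuine care, and hence the main obstacle, is this last identification of $\diamond$ with $\tau'$: one must keep the two indexing conventions straight, since left translations are indexed by the first argument while right translations are indexed by the second, and track correctly where $\sigma_e$ and $\sigma_e^{-1}$ act. Everything else is a direct appeal to the results of Sections~\ref{sec4a} and~\ref{sec4} together with the solution--birack correspondence.
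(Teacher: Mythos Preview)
Your proposal is correct and follows exactly the route the paper intends: the paper does not give a separate proof of Theorem~\ref{th:2persol} but simply prefaces it with the sentence ``By Corollary~\ref{cor:cor1} and Lemma~\ref{lm:2p_iso} each multipermutation solution of level~2 defines a distributive one,'' and your write-up unpacks precisely this, invoking Lemma~\ref{lm:2p_iso} (whose proof in turn uses Corollary~\ref{cor:cor1}) and then carrying out the bookkeeping that identifies $\ast$ with $\sigma'$ and $\diamond$ with $\tau'$ via the formula $x\diamond y=\pi^{-1}(x\bullet\pi(y))$ from Section~\ref{sec4}.
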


On the other side, Theorem \ref{th:repres} shows that each multipermutation solution of level 2 originates from a distributive solution. We have even more. The theorem gives a procedure how to obtain all multipermutation solutions of level~$2$ from distributive ones.

\vskip 2mm

We have to take all distributive solutions $(X,\sigma,\tau)$ such that there exists $a\in X$ with $L_a={\rm id}$ and, for each of them, all permutations
$\pi$ of the set $X$ which satisfy Condition \eqref{eq:sigma} i.e. for $x,y\in X$
\[
\sigma_{\pi(y)}\pi\sigma_x=\sigma_{\pi(x)}\pi\sigma_y.
\]
Then $(X,\sigma',\tau')$, where $\sigma_x'=\sigma_x\pi$ and $\tau_y'=\pi^{-1}\tau_{\pi(y)}$, will be multipermutation solutions of level~$2$.
\vskip 2mm

By Lemma \ref{c:lri2red} and Remark \ref{rm.rho} we can construct all non-distributive solutions
of multipermutation level~2 of size~$n$.

\begin{algorithm} Outputs all non-distributive solutions
of multipermutation level~2 of size~$n$:
 \begin{enumerate}
  \item For every distributive solution $(X,\sigma,\tau)$ of size~$n$ do (2)--(7).
  \item If there exist no $x\in X$ such that $\sigma_x=\mathrm{id}$ return to (1).
  \item For every permutation $\pi\in S_X$ do (4)--(6).
  \item If $\sigma_x^{-1}=\tau_{\pi(x)}$, return to (3).
  \item If $\pi$ does not send classes of~$\sim$ onto classes of $\sim$, return to (3).
  %\item If $\pi$ permutes classes of $\sim$, that means if $\pi$ satisfies \eqref{eq:pi2red}, return to (3).
  \item If $\pi$ does not satisfy \eqref{eq:sigma}, return to (3).
 \item Construct the solution $(X,\sigma',\tau')$, where $\sigma'(x,y)=\sigma(x,\pi(y))$ and $\tau'(x,y)=\pi^{-1}(\tau(x,\pi(y)))$.
 \end{enumerate}
\end{algorithm}

Unlike in the case of distributive solutions, we do not
have any efficient criterion to test isomorphisms.
As Example~\ref{ex:6.10} shows, the same solutions can
be obtained from different distributive solutions.

\section{Enumeration}\label{sec:enum}
In this section we enumerate solutions of multipermutation level~2
for small sizes and we estimate, for all sizes, how many racks and solutions are there, up to isomorphism.

Using the characterization by sums of trivial affine meshes we can straightforwardly describe all solutions of
small sizes. The size 4 can be done manually.
\begin{example}
By results of \cite{ESS}, there are 23 solutions of size 4, up to isomorphism. Two of them are irretractable.
Exactly 17 of them are distributive. They are the sums of the following trivial affine meshes:
\begin{itemize}
    \item One orbit: $((\Z_4),(1)).$

        \item Two orbits:
        $((\Z_3,\Z_1),\left(\begin{smallmatrix}0&0\\1&0\end{smallmatrix}\right))$,
        $((\Z_3,\Z_1),\left(\begin{smallmatrix}1&0\\0&0\end{smallmatrix}\right))$,
        $((\Z_3,\Z_1),\left(\begin{smallmatrix}1&0\\1&0\end{smallmatrix}\right))$,
        $((\Z_3,\Z_1),\left(\begin{smallmatrix}2&0\\1&0\end{smallmatrix}\right))$,\\
        $((\Z_2,\Z_2),\left(\begin{smallmatrix}0&0\\1&1\end{smallmatrix}\right))$,
        $((\Z_2,\Z_2),\left(\begin{smallmatrix}1&1\\1&0\end{smallmatrix}\right))$, $((\Z_2,\Z_2),\left(\begin{smallmatrix}1&0\\1&1\end{smallmatrix}\right))$,
        $((\Z_2,\Z_2),\left(\begin{smallmatrix}1&0\\0&1\end{smallmatrix}\right))$,\\
        $((\Z_2,\Z_2),\left(\begin{smallmatrix}0&1\\1&0\end{smallmatrix}\right))$,
        $((\Z_2,\Z_2),\left(\begin{smallmatrix}1&1\\1&1\end{smallmatrix}\right))$.

         \item Three orbits: $((\Z_2,\Z_1,\Z_1),\,\left(\begin{smallmatrix}1&0&0\\0&0&0\\0&0&0\end{smallmatrix}\right))$,
         $((\Z_2,\Z_1,\Z_1),\,\left(\begin{smallmatrix}0&0&0\\0&0&0\\1&0&0\end{smallmatrix}\right))$,
         $((\Z_2,\Z_1,\Z_1),\,\left(\begin{smallmatrix}0&0&0\\1&0&0\\1&0&0\end{smallmatrix}\right))$,\\
         $((\Z_2,\Z_1,\Z_1),\,\left(\begin{smallmatrix}1&0&0\\0&0&0\\1&0&0\end{smallmatrix}\right))$,
         $((\Z_2,\Z_1,\Z_1),\,\left(\begin{smallmatrix}1&0&0\\1&0&0\\1&0&0\end{smallmatrix}\right))$.

        \item Four orbits: $((\Z_1,\Z_1,\Z_1,\Z_1),\,\left(\begin{smallmatrix}0&0&0&0\\0&0&0&0\\0&0&0&0\\0&0&0&0\end{smallmatrix}\right))$.
\end{itemize}
There remain four multipermutation solutions of size~$4$
that are not distributive. Two of them are of level~$2$,
both of them described in Example~\ref{ex:612}. Two of them are
of level~$3$ - the corresponding biracks have the following tables of $\circ$-multiplication:
\[
\begin{array}{c|cccc}
    \circ & 0 & 1 & 2 & 3\\
   \hline
   0 & 0 & 1 & 2 &3\\
   1 & 0 & 1 & 2 & 3 \\
   2 & 0 & 1 & 3 & 2\\
   3 & 1 & 0 & 3 &2
  \end{array} \qquad {\rm and}\quad
  \begin{array}{c|cccc}
    \circ & 0 & 1 & 2 & 3\\
   \hline
   0 & 1 & 0 & 2 &3\\
   1 & 1 & 0 & 2 & 3 \\
   2 & 0 & 1 & 3 & 2\\
   3 & 1 & 0 & 3 &2
  \end{array}.
  \]
  They are not isomorphic since the first one has two idempotent elements, whereas the other one has none.
\end{example}

The same way as we did it for size~$4$, we can compute
other small sizes, on a computer of course. We start with
the numbers of small racks.
In Table \ref{Fig:count_medial}, we compare the numbers of isomorphism classes of all racks (see OEIS sequence A181770 \cite{OEIS}) and 2-reductive racks.
Computing $2$-reductive racks directly using Theorems~\ref{th:tri} and~\ref{th:iso} is hopeless for larger numbers.
Hence the numbers of $2$-reductive racks were computed
using Burnside's lemma, see~\cite{JPSZ} for more details.

As we can see, the numbers of $2$-reductive racks grow really fast,
actually, according to Blackburn~\cite{B}, there are at least $2^{n^2/4-O(n\log n)}$ $2$-reductive racks of size~$n$.
We can also give an upper bound, which is not far from the lower bound. The proof is exactly the same as the proof of~\cite[Theorem 8.2]{JPSZ}.

\begin{theorem}\label{Th:Count-rack}
 There are at most $2^{(1/4+o(1))n^2}$ $2$-reductive racks of size~$n$, up to isomorphism.
\end{theorem}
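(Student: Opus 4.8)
The plan is to combine the structural description of Theorem~\ref{th:tri} with a crude enumeration of trivial affine meshes, the point being that the dominant contribution comes from the matrix of constants and that this contribution is maximised when every orbit is a copy of $\Z_2$.

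First I would reduce the problem to counting meshes. By Theorem~\ref{th:tri} every $2$-reductive rack of size~$n$ is the sum of some trivial affine mesh $((A_i)_{i\in I},\,(c_{i,j})_{i,j\in I})$, so the map sending a mesh to the isomorphism class of its sum is surjective; hence the number of isomorphism classes is at most the number of meshes. A mesh is pinned down by three pieces of data: a partition $n=n_1+\dots+n_k$ recording the orbit sizes $n_i=|A_i|$, a choice of abelian group $A_i$ of order $n_i$ for each $i$, and the matrix $C=(c_{i,j})$ with entries $c_{i,j}\in A_j$. Dropping the generating condition $A_j=\langle\{c_{i,j}\mid i\in I\}\rangle$ only enlarges the family, so it keeps the count an upper bound; and one could further quotient by the isomorphism relation of Theorem~\ref{th:iso}, but since that only decreases the count it is unnecessary here.

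The heart of the argument is the estimate for the matrices. For fixed orbit sizes the number of admissible matrices is $\prod_{i,j}|A_j|=\prod_{j}n_j^{\,k}$, whose base-$2$ logarithm equals $kS$ with $S=\sum_{j}\log_2 n_j$. I would bound $kS$ by means of the elementary inequality $1+\log_2 m\le m$, which holds for every \emph{integer} $m\ge 1$, with equality exactly at $m=1$ and $m=2$. Summing it over the orbits gives
\[
k+S=\sum_{j}\bigl(1+\log_2 n_j\bigr)\le\sum_{j}n_j=n,
\]
and then AM--GM yields $kS\le\bigl((k+S)/2\bigr)^2\le n^2/4$. Thus there are at most $2^{n^2/4}$ matrices for each fixed assignment of groups. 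The remaining factors are of lower order: the number of partitions of $n$ is $p(n)=2^{O(\sqrt n)}$, and writing $a(m)$ for the number of abelian groups of order $m$ one has $a(m)\le m$, so the number of group assignments is $\prod_i a(n_i)\le\prod_i n_i\le 2^{\sum_i\log_2 n_i}\le 2^{n}$. Multiplying the three contributions gives a total of at most $2^{O(\sqrt n)}\cdot 2^{n}\cdot 2^{n^2/4}=2^{(1/4+o(1))n^2}$, as claimed.

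The step I expect to carry the real weight is the inequality $1+\log_2 m\le m$, and specifically the fact that it is applied only at integers. For real $m\in(1,2)$ the inequality fails, and the continuous relaxation of the optimisation $\max kS$ subject to $\sum_j n_j=n$ would return the strictly larger constant $\approx 0.265$ in place of $1/4$. Integrality of the orbit sizes is therefore essential: it is exactly what forces the extremum onto $\Z_2$-orbits and fixes the exponent at $n^2/4$, matching Blackburn's lower bound $2^{n^2/4-O(n\log n)}$. Everything else in the proof is bookkeeping of $2^{o(n^2)}$ factors.
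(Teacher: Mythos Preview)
Your argument is correct and essentially reproduces the intended proof. The paper itself does not give a proof but simply states that it is the same as \cite[Theorem~8.2]{JPSZ}; that argument proceeds exactly as you do---pass to trivial affine meshes via Theorem~\ref{th:tri}, drop the generating condition, bound $\log_2\prod_j n_j^{\,k}=kS$ using the integer inequality $1+\log_2 m\le m$ summed to $k+S\le n$ and then AM--GM, and absorb the $2^{o(n^2)}$ contributions from partitions and abelian-group choices.
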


The numbers in Table~\ref{Fig:count_medial} suggest that the vast majority of all racks are $2$-reductive.
However, we do not have any proof of this fact.
Hence we can only conjecture that:

\begin{conjecture}
 There are $2^{(1/4+o(1))n^2}$ racks of size~$n$, up to isomorphism.
\end{conjecture}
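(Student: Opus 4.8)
The plan is to prove the Conjecture by establishing the matching \emph{upper} bound, since the lower bound is already in hand: $2$-reductive racks form a subclass of all racks, so Blackburn's estimate \cite{B} gives at least $2^{n^2/4-O(n\log n)}$ racks of size~$n$, and it therefore suffices to show that the \emph{total} number of racks of size~$n$ is at most $2^{(1/4+o(1))n^2}$. By Theorem~\ref{Th:Count-rack} this already holds for the $2$-reductive ones, so the real content of the conjecture lies entirely in controlling the racks that are \emph{not} $2$-reductive.

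First I would encode a rack $(X,\circ,\ld_{\circ})$ by its left-translation map $L\colon X\to\mathrm{Sym}(X)$, $x\mapsto L_x$. Left distributivity \eqref{eq:left} is exactly the identity $L_{L_x(y)}=L_xL_yL_x^{-1}$, which says that $L$ is equivariant for the conjugation action of $G:=\LMlt(X)$ on itself, i.e. $L_{g\cdot x}=gL_xg^{-1}$ for all $g\in G$. Consequently a rack is determined by the subgroup $G\le\mathrm{Sym}(X)$, its orbit decomposition $X=O_1\sqcup\cdots\sqcup O_k$, and a single \emph{seed} $L_{e_i}\in G$ per orbit representative~$e_i$, subject to the constraints that $L_{e_i}$ centralises $\mathrm{Stab}_G(e_i)$ and that the conjugates $\{gL_{e_i}g^{-1}\}$ generate~$G$. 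The count would then run as: number of racks $\le\sum$ over orbit profiles of $(\text{number of admissible }G)\times(\text{number of admissible seed tuples})$.

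The merit of this reduction is that per orbit only one permutation must be named, the remaining translations being forced by conjugation. In the $2$-reductive case $G$ is abelian by Lemma~\ref{lm:2red}, hence acts regularly on each orbit, and the seeds reduce to the constants of a trivial affine mesh (Theorem~\ref{th:tri}); this is precisely what yields the clean $2^{(1/4+o(1))n^2}$ bound. To finish the general case I would split according to the number of orbits: when there are many small orbits the number of seed tuples is directly controlled, while when there are few orbits one instead needs a sharp bound on the number of \emph{transitive} rack structures on a set, i.e. on the number of pairs $(G,L_e)$ with $G$ transitive and $L_e\in C_G(\mathrm{Stab}_G(e))$ generating~$G$ under conjugation.

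The hard part will be this last step. For a general rack the inner group $G$ may be large and highly non-abelian, and there is no analogue of the affine-mesh normal form to tame it; one must bound, uniformly over transitive $G\le\mathrm{Sym}(X)$, the combined number of choices of $G$ together with a compatible seed, and keep the total within $2^{(1/4+o(1))n^2}$. Already the number of transitive subgroups of $\mathrm{Sym}(X)$ is delicate, and the centraliser condition couples the choice of $G$ to that of the seed in a way that resists the crude product estimate. This is exactly the gap the authors flag, and the most plausible route to closing it is to prove that racks with non-abelian inner group are asymptotically negligible compared with the $2$-reductive ones, so that the total count is governed by Theorem~\ref{Th:Count-rack}.
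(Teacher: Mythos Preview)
The statement you are addressing is a \emph{Conjecture} in the paper, not a theorem: the authors explicitly say ``we do not have any proof of this fact'' and offer only the numerical evidence of Table~\ref{Fig:count_medial}. There is therefore no proof in the paper to compare your attempt against.

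Your identification of the logical structure is correct: the lower bound is supplied by Blackburn's count of $2$-reductive racks, so the conjecture reduces entirely to an upper bound $2^{(1/4+o(1))n^2}$ for \emph{all} racks, and Theorem~\ref{Th:Count-rack} handles only the $2$-reductive subclass. Your encoding of a rack by the inner group $G=\LMlt(X)$, the orbit decomposition, and one seed $L_{e_i}\in C_G(\mathrm{Stab}_G(e_i))$ per orbit is sound and is the standard framework for such counts.

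However, your proposal is not a proof, and you say so yourself: the final paragraph concedes that bounding the number of transitive rack structures---equivalently, pairs $(G,L_e)$ with $G\le\mathrm{Sym}(X)$ transitive and $L_e$ a compatible seed generating~$G$ under conjugation---within $2^{(1/4+o(1))n^2}$ is the ``hard part'', and you do not carry it out. Your suggested route, showing that racks with non-abelian $\LMlt(X)$ are asymptotically negligible compared with the $2$-reductive ones, is a plausible strategy but is a substantial result in its own right; the crude product estimate (number of transitive subgroups times number of seeds) is far too weak, and nothing in your outline replaces it. As written, the proposal is a correct diagnosis of what would have to be done, not an argument that does it---which is precisely why the paper states the claim as a conjecture.
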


In Table~\ref{Fig:count_solution}, we can see the numbers of solutions.
The total numbers of solutions is taken from~\cite{ESS}, the numbers of $2$-reductive
solutions are the same as the numbers of $2$-reductive racks.
The numbers of $2$-permutational solutions (i.e. multipermutation solutions of level~$2$) that are not $2$-reductive
were computed by a brute force search algorithm using the Mace4 software \cite{Prover}.

\begin{theorem}\label{Th:Count-solution}
 There are at most $2^{(1/4+o(1))n^2}$ multipermutation solutions of level~$2$ of size~$n$,
 up to isomorphism.
\end{theorem}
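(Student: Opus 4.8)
The plan is to reduce the enumeration of multipermutation solutions of level~$2$ to the bound on $2$-reductive racks already proved in Theorem~\ref{Th:Count-rack}, paying only a factorial overhead that is swallowed by the $o(1)$ in the exponent. By the correspondence recalled in Section~\ref{sec:sol}, a solution of size~$n$ is the same datum as an involutive birack on $n$ points, and by Theorem~\ref{GI:mper} (taken with $m=2$) together with Proposition~\ref{lm:birmed}, every multipermutation solution of level~$2$ is $2$-permutational, hence medial. Thus it suffices to bound the number of isomorphism classes of $2$-permutational involutive biracks of size~$n$.

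The structural engine is Theorem~\ref{th:repres}: each $2$-permutational involutive birack is a $\pi$-isotope of a distributive one for some bijection $\pi$ of the $n$-element carrier. Now distributive involutive biracks are exactly the $2$-reductive ones (Corollary~\ref{cor:eqvdis}), and by Theorem~\ref{th:rclq} together with Corollary~\ref{cor:2red} such a birack is completely determined by its underlying $2$-reductive rack; consequently the number of isomorphism classes of distributive involutive biracks of size~$n$ equals the number of $2$-reductive racks of that size, which is at most $2^{(1/4+o(1))n^2}$ by Theorem~\ref{Th:Count-rack}. I would fix one representative $D_0$ in each of these isomorphism classes.

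Next I would show that every $2$-permutational birack is isomorphic to a $\pi'$-isotope of one of the chosen $D_0$. If a $2$-permutational $B$ is the $\pi$-isotope of a distributive $D$, and $\phi\colon D\to D_0$ is a birack isomorphism, then the same one-line computation that underlies the isotope--isomorphism identity~\eqref{eq:isoiso} shows that $\phi$ carries the $\pi$-isotope of $D$ to the $(\phi\pi\phi^{-1})$-isotope of $D_0$; hence $B$ is isomorphic to a $\pi'$-isotope of $D_0$ with $\pi'=\phi\pi\phi^{-1}\in S_X$. Therefore the assignment $(D_0,\pi')\mapsto$ (the isomorphism class of the $\pi'$-isotope of $D_0$) is a surjection onto the isomorphism classes of $2$-permutational biracks.

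Counting the domain of this surjection crudely, there are at most $2^{(1/4+o(1))n^2}$ choices of $D_0$ and exactly $n!$ choices of $\pi'$, so the number of isomorphism classes of $2$-permutational biracks---and a fortiori of multipermutation solutions of level~$2$---is at most $2^{(1/4+o(1))n^2}\cdot n!$. Since $\log_2 n!=O(n\log n)=o(n^2)$, this product is again $2^{(1/4+o(1))n^2}$, proving the claim. The step needing the most care is the ``up to isomorphism'' reduction in the previous paragraph: one must verify that replacing $D$ by an isomorphic $D_0$ merely conjugates the isotoping permutation, so that ranging $\pi'$ over all of $S_X$ really does capture every isomorphism class obtained from $D_0$; the rest is the routine absorption of the $n!$ factor into the error term.
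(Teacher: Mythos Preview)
Your proposal is correct and follows essentially the same approach as the paper: reduce to the count of $2$-reductive racks via Theorem~\ref{th:repres} (the paper cites Theorem~\ref{th:2persol}, its solution-language counterpart) and absorb the $n!$ overhead from the choice of isotoping permutation into the $o(1)$. Your treatment is in fact more careful than the paper's two-line argument, since you explicitly verify that passing to a chosen representative $D_0$ merely conjugates the isotoping permutation, so that ranging over $S_X$ genuinely captures all isomorphism classes.
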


\begin{proof}
 As was shown in Theorem~\ref{th:2persol}, every multipermutation solution of level~$2$ can be obtained
 as an isotope of a $2$-reductive solution using a permutation.
 Hence, using Theorem~\ref{Th:Count-rack}, the number of $2$-permutational solutions
 is less than $2^{(1/4+o(1))n^2}\cdot n!=2^{(1/4+o(1))n^2}$.
\end{proof}

In the case of solutions, Table~\ref{Fig:count_solution} suggests that the numbers of all solutions
grow faster than the numbers of multipermutation solutions of level~$2$ but not much faster.
We can therefore conjecture:

\begin{conjecture}
 There are $2^{O(n^2)}$ solutions of size~$n$, up to isomorphism.
\end{conjecture}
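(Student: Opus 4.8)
Since the quoted lower bound of Blackburn already produces at least $2^{(1/4)n^2 - O(n\log n)}$ pairwise non-isomorphic $2$-reductive racks, and each such rack is the left-quasigroup part of a unique distributive solution with non-isomorphic racks giving non-isomorphic solutions, we already have at least $2^{\Omega(n^2)}$ solutions of size~$n$. The whole content of the conjecture is therefore the matching \emph{upper} bound: the number of isomorphism classes of solutions of size~$n$ is at most $2^{O(n^2)}$.

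By Theorem~\ref{th:rclq} a solution is the same datum as a non-degenerate right cyclic left quasigroup $(X,\circ,\ld_\circ)$, and such a quasigroup is completely recorded by its list of left translations $(L_x)_{x\in X}$ inside $S_X$. This gives immediately the crude bound $(n!)^n = 2^{(1+o(1))\,n^2\log n}$, which overshoots the target by a factor of $\log n$ in the exponent. The real problem is thus to shave this logarithmic factor, i.e. to encode a right cyclic non-degenerate left quasigroup in $O(n^2)$ bits rather than $O(n^2\log n)$.

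The approach I would try is an induction on the retraction. Writing $m=|\mathrm{Ret}(X,\circ,\bullet)|$, the assignment $x\mapsto L_x$ factors through $X/\mathord{\sim}$, so there are only $m$ distinct left translations and the quasigroup is rebuilt from three pieces of data: the surjection $X\to X/\mathord{\sim}$ recording the $\sim$-classes, the smaller solution $\mathrm{Ret}(X)$ on $m$ points, and the way each of the $m$ distinct translations is lifted to a permutation of $X$ respecting the congruence. For retractable solutions ($m<n$) this expresses the count in size~$n$ through the count in size~$m<n$ together with the number of admissible lifts, and one hopes that the right cyclic identity $L_xL_{x\ld_\circ y}=L_yL_{y\ld_\circ x}$ constrains the lifts enough to keep the extension count at $2^{O(n^2)}$ --- exactly the rigidity that, in the distributive case, is made explicit by the affine mesh of Theorem~\ref{th5.8} and yields the clean exponent $2^{(1/4+o(1))n^2}$ of Theorem~\ref{Th:Count-solution}.

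The main obstacle, and the reason the statement is only conjectural, is twofold. Irretractable solutions ($m=n$) are invisible to this induction and would need an independent bound; and even for retractable ones the lifting step is governed by the naive estimate $(n!)^m$, which for $m$ close to $n$ is again $2^{O(n^2\log n)}$, so one must prove that the right cyclic law forces the distinct translations to range over a family of size $2^{O(n)}$ rather than the generic $n!=2^{O(n\log n)}$. This is precisely the linear-algebraic rigidity that is automatic for distributive solutions but that we cannot presently establish in general. A possible alternative is to count solutions through their permutation group $\LMlt(X)\le S_X$: as $S_n$ has only $2^{O(n^2)}$ subgroups, it would suffice to show that each subgroup underlies at most $2^{O(n^2)}$ solutions, which once more reduces to the same rigidity question for the map $x\mapsto L_x$.
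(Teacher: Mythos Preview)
The statement you were given is a \emph{conjecture} in the paper, not a theorem: the authors offer no proof, only the empirical evidence of Table~\ref{Fig:count_solution} and the sentence ``the numbers of all solutions grow faster than the numbers of multipermutation solutions of level~$2$ but not much faster.'' There is therefore no proof in the paper to compare your proposal against.

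Your write-up is not a proof either, and you are upfront about this: you correctly isolate the content of the conjecture as the upper bound, note the trivial $(n!)^n=2^{(1+o(1))n^2\log n}$ estimate, sketch a retraction-based induction, and then explain precisely where it breaks down (irretractable solutions; the lifting count $(n!)^m$ for $m$ near $n$). That diagnosis is accurate and matches the state of affairs in the paper --- the authors can handle the distributive case via the affine mesh (Theorems~\ref{Th:Count-rack} and~\ref{Th:Count-solution}) but have nothing for general solutions. Your alternative suggestion through subgroups of $S_n$ is also reasonable (Pyber's bound gives $2^{O(n^2)}$ subgroups), though it runs into the same rigidity obstacle you identify. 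In short: your assessment that the problem is open is correct, your outline of the difficulties is sound, and the paper contains nothing beyond what you have written.
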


\begin{table}
\begin{small}
$$\begin{array}{|r|rrrrrrrrrrr|}\hline
n                       & 1& 2& 3& 4&  5&  6&   7&    8&     9&    10&      11\\\hline
\text{racks}           & 1& 2& 6& 19& 74& 353& 2080& 16023& & & \\
\text{2-reductive}    & 1& 2& 5& 17&  65&  323&   1960&    15421&    155889&     2064688&       35982357 \\ \hline
\end{array}$$
$$\begin{array}{|r|rrr|}\hline
n                       &      12&        13&     14 \\\hline
\text{racks}           & & &  \\
\text{2-reductive racks}    &      832698007 &        25731050861&    1067863092309\\ \hline
\end{array}$$
\end{small}
\caption{The number of racks and 2-reductive racks of size $n$, up to isomorphism.}
\label{Fig:count_medial}
\end{table}

\begin{table}
\begin{small}
$$\begin{array}{|r|rrrrrrrr|}\hline
n                       & 1& 2& 3& 4&  5&  6&   7&    8\\\hline
\text{involutive solutions}    & 1& 2& 5& 23&  88&  595&   3456&    34528 \\ \hline
\text{multipermutation of level~$2$}    & 1& 2& 5& 19&  70&  359&   2095&    16332 \\ \hline
\text{$2$-reductive}    & 1& 2& 5& 17&  65&  323&   1960&    15421 \\ \hline
\text{$2$-permutational, not $2$-reductive}    & 0& 0& 0& 2&  5&  36&   135&    911 \\ \hline
\end{array}$$
\end{small}
\caption{The number of involutive solutions of size $n$, up to isomorphism.}
\label{Fig:count_solution}
\end{table}


\begin{thebibliography}{99}
\bibitem{Acz}
J. Acz\'{e}l, {\it On mean values}, Bull. Amer. Math. Soc. {\bf 54} (1948), 392--400.

\bibitem{B}
S. Blackburn, {\it Enumerating finite racks, quandles and kei},
Electron. J. Combin. {\bf 20} (2013), no. 3, Paper 43, 9 pp.


\bibitem{Car}
J.S. Carter,  {\it A survey of quandle ideas} in: Kauffman, H. Louis (ed.) et al., Introductory
lectures on knot theory, Series Knots and Everything {\bf 46}, World
Sci. Publ., Hackensack, NJ, (2012), 22--53.

\bibitem{CJR}
F. Ced\'{o}, E. Jespers, \'{A}. del R\'{i}o, {\it Involutive Yang-Baxter groups}, Trans. Amer.
Math. Soc. {\bf 362} (2010), 2541--2558.

\bibitem{CJO10}
F. Ced\'{o}, E. Jespers, J. Okni\'{n}ski, {\it Retractability of set theoretic solutions of the Yang-Baxter equation}, Adv.
Math. {\bf 224} (2010), 2472--2484.

\bibitem{D}
P. Dehornoy, \emph{Braids and self-distributivity}, Progress in Maths 192, Birkhauser, 1999.

\bibitem{D15}
P. Dehornoy, {\it Set-theoretic solutions of the Yang-Baxter equation, RC-calculus, and
Garside germs}, Adv. in Math. {\bf 282} (2015), 93--127.

\bibitem{Dr90}
V.G. Drinfeld, {\it On some unsolved problems in quantum group theory}, In: P.P. Kulish (ed.) Quantum
groups, in: Lecture Notes in Math., vol. 1510, Springer-Verlag, Berlin, (1992), pp. 1--8.

\bibitem{EN}
M. Elhamdadi, S. Nelson, \emph{Quandles: An introduction to the algebra of knots}, American Mathematical Society, Providence, 2015.

\bibitem{ESS}
P. Etingof,  T. Schedler, A. Soloviev, {\it Set-theoretical solutions to the quantum Yang-Baxter equation}, Duke Math. J. {\bf 100} (1999), 169--209.

\bibitem{FJSK}
R. Fenn, M. Jordan-Santana, L. Kauffman, {\it Biquandles and virtual links}, Topology and its Appl. {\bf 145} (2004), 157--175.

\bibitem{FR}
R. Fenn, C. Rourke, {\it Racks and links in codimension two}, J. Knot Theory Ramfications {\bf 1} (1992), 343--406.

\bibitem{GI04}
T. Gateva-Ivanova, {\it A combinatorial approach to the set-theoretic solutions of the Yang-Baxter
equation}, J. Math. Phys. {\bf 45} (2004), 3828--3858.

\bibitem{GI18}
T. Gateva-Ivanova, {\it Set-theoretic solutions of the Yang-Baxter equation, braces and symmetric groups}, Adv. in Math. {\bf 338} (2018), 649--701.

\bibitem{GIC12}
T. Gateva-Ivanova, P. Cameron, {\it Multipermutation solutions of the Yang-Baxter equation}, Comm. Math. Phys. {\bf 309} (2012), 583--621.

\bibitem{GIM07}
T. Gateva-Ivanova, S. Majid, {\it Set theoretic solutions of the Yang-Baxter equations, graphs and
computations}, J. Symb. Comp. {\bf 42} (2007), 1079--1112.

\bibitem{GIM08}
T. Gateva-Ivanova, S. Majid, {\it Matched pairs approach to set theoretic solutions
of the Yang-Baxter equation}, J. Algebra {\bf 319} (2008), 1462--1529.

\bibitem{GIM11}
T. Gateva-Ivanova, S. Majid, {\it Quantum spaces associated to multipermutation solutions of level two}, Algebr. Represent. Theory {\bf 14} (2011), 341--376.

\bibitem{JPSZ}
P. Jedli\v cka, A. Pilitowska, D. Stanovsk\'y, A. Zamojska-Dzienio, {\it The structure of medial quandles}, J. Algebra {\bf 443} (2015), 300--334.

\bibitem{JPZ}
P. Jedli\v cka, A. Pilitowska, A. Zamojska-Dzienio, {\it The retraction relation for biracks}, J. Pure Appl. Algebra {\bf 223} (2019), 3594--3610.

\bibitem{JK}
J. Je\v{z}ek, T. Kepka, {\it Medial groupoids}, Rozpravy \v{C}SAV {\bf 93/2}, 1983.

\bibitem{Jimbo}
M. Jimbo, {\it Introduction to the Yang-Baxter equation}, Int. J. Modern Physics A, 4-15 (1989), 3759--3777.

\bibitem{LV}
V. Lebed, L. Vendramin, {\it On structure groups of set-theoretical solutions to the Yang-Baxter equation}, Proc. Edinburgh Math. Soc. (2019) online first, {\tt http://doi.org/10.1017/S0013091518000548}

\bibitem{Loo}
O. Loos, {\it Symmetric spaces}, J. Benjamin New York, 1969.

\bibitem{Prover}
W.W. McCune, {\it Prover9 and Mace4,} {\tt http://www.cs.unm.edu/\char126 mccune/prover9/}, 2005--2010.

\bibitem{Mur}
D.C. Murdoch, {\it Quasi-groups which satisfy certain generalized
associative laws}, Amer. J. Math. {\bf 61} (1939), 509--522.

\bibitem{OEIS}
OEIS Foundation Inc., {\it The On-Line Encyclopedia of Integer Sequences}, {\tt http://oeis.org}

\bibitem{P}
H. Pflugfelder, {\it Quasigroups and Loops: Introduction}, Heldermann Verlag, Berlin, 1990.

\bibitem{Pei}
C.S. Peirce, {\it On the algebra of logic}, Amer. J. of. Math. {\bf 3} (1880), 15--57.

\bibitem{Plo}
J. P\l onka, {\it On $k$-cyclic groupoids}, Math. Japon. {\bf 30} (1985),
no. 3, 371--382.

\bibitem{PRR96}
A. Pilitowska, A. Romanowska, B. Roszkowska-Lech, {\it Products of
mode varieties and algebras of subalgebras}, Math. Slovaca {\bf 46}
(1996), no. 5, 497--514.

\bibitem{PR}
A. Pilitowska, A. Romanowska, {\it Reductive modes}, Period. Math.
Hungar. {\bf 36} (1998), no. 1, 67--78.

\bibitem{RS}
A. Romanowska, J.D.H. Smith, {\it Modes,} World Scientific, 2002.

\bibitem{RS85}
A.B. Romanowska, J.D.H. Smith, {\it Modal Theory}, Heldermann
Verlag, Berlin, 1985.

\bibitem{Rump}
W. Rump, {\it A decomposition theorem for square-free unitary solutions of the quantum
Yang-Baxter equation}, Adv. Math. {\bf 193} (2005), 40-55.

\bibitem{Sch}
E. Schr\"{o}der, {\it \"{U}ber Algorithmen und Calculi}, Arch. der
Math. und Phys., {\bf 5} (1887), 225--278.

\bibitem{S06}
D. Stanovsk\'y, {\it On axioms of biquandles}, J. Knot Theory Ramifications {\bf 15/7} (2006), 931--933.

\bibitem{Sus}
A.K. Sushkievich, {\it The Theory of Generalized Groups}, Gos.
Nau\u{c}.-Tech. Izd. Ukrainny, Charkov-Kiev, 1937, in Russian.

\bibitem{Ven}
L. Vendramin, {\it Extensions of set-theoretic solutions of the Yang-Baxter equation and a conjecture of Gateva-Ivanova}, J. Pure Appl. Algebra {\bf 220} (2016), 2064–-2076.
\end{thebibliography}
\end{document}